\title[2D Micropolar Equations]{On the initial- and boundary-value problem for 2D micropolar equations with only angular velocity dissipation}
\author{Quansen Jiu, \, Jitao Liu, \, Jiahong Wu, \, Huan Yu}
\address[Quansen Jiu]{School of Mathematical Sciences, Capital Normal Universit
y, Beijing, 100048, P.R. China.}
\email{jiuqs@mail.cnu.edu.cn}
\address[Jitao Liu]{College of Applied Sciences, Beijing University of Technology, Beijing, 100124, P. R. China.}
\email{jtliu@bjut.edu.cn,\,\,\,jtliumath@qq.com}
\address[Jiahong Wu]{Department of Mathematics, Oklahoma State University, Stillwater, OK 74078, USA.}
\email{jiahong.wu@okstate.edu}
\address[Huan Yu] {Institute of Applied Physics and Computational Mathematics, Beijing, 100088, P.R. China.}
\email{yuhuandreamer@163.com}
\keywords{Bounded domain, global regularity, micropolar equations, partial dissipation}
\thanks{{\em 2010 Mathematics Subject Classification.} 35Q35, 76D03.}
\theoremstyle{plain}
\newtheorem{corollary}{Corollary}[section]
\newtheorem{theorem}{Theorem}[section]
\newtheorem{lemma}{Lemma}[section]
\newtheorem{proposition}{Proposition}[section]
\theoremstyle{definition}
\newtheorem{definition}{Definition}[section]
\let\f=\frac
\let\p=\partial
\def\R{\mathbb R}
\newcommand{\beq}{\begin{equation}}
\newcommand{\eeq}{\end{equation}}
\newcommand{\ben}{\begin{eqnarray}}
\newcommand{\een}{\end{eqnarray}}
\newcommand{\beno}{\begin{eqnarray*}}
\newcommand{\eeno}{\end{eqnarray*}}
\begin{document}

\begin{abstract}
This paper focuses on the initial- and boundary-value problem
for the two-dimensional micropolar equations with only angular
velocity dissipation in a smooth bounded domain. The aim
here is to establish the global existence and uniqueness of
solutions by imposing natural boundary conditions and minimal
regularity assumptions on the initial data. Besides, the global
solution is shown to possess higher regularity when the
initial datum is more regular. To obtain these results, we overcome
two main difficulties, one due to the lack of full dissipation and
one due to the boundary conditions. In addition to the global
regularity problem, we also examine the large-time behavior of
solutions and obtain explicit decay rates.
\end{abstract}
\maketitle

\section{Introduction and main results}
\label{intro}
\setcounter{equation}{0}

This paper studies the global existence and uniqueness, and large-time behavior of solutions to the two-dimensional (2D) micropolar equations
in a bounded domain $D$ with smooth boundary. The 2D micropolar equations are a special case of the 3D micropolar equations. The standard 3D incompressible micropolar equations are given by
\begin{equation}\label{3D}
\left\{\begin{array}{ll}
u_t-(\nu+\kappa)\Delta u + u\cdot\nabla u +\nabla\pi=2\kappa\nabla\times w,\\
w_t-\gamma\Delta w+4\kappa w-\mu\nabla\nabla \cdot w+u\cdot\nabla w=2\kappa\nabla\times u,\\
\nabla\cdot u=0,
\end{array}\right.
\end{equation}
where $u=u(x,t)$ denotes the fluid velocity, $\pi(x,t)$ the scalar pressure, $w(x,t)$ the micro-rotation field, and the parameter $\nu$ represents the Newtonian kinematic viscosity, $\kappa$ the micro-rotation viscosity, $\gamma$ and $\mu$ the angular viscosities.

\vskip .1in
In the special case when
$$
u=(u_1(x_1,x_2, t), u_2(x_1, x_2, t), 0),\,\pi = \pi(x_1, x_2, t),\,w = (0, 0, w_3(x_1, x_2, t)),
$$
the 3D micropolar equations reduce to the 2D micropolar equations,
\begin{equation}\label{2D}
\left\{\begin{array}{ll}
u_t-(\nu+\kappa)\Delta u+u\cdot\nabla u+\nabla\pi=2\kappa\nabla^{\perp} w,\\
w_t-\gamma\Delta w+4\kappa w+u\cdot\nabla w=2\kappa\nabla\times u,\\
\nabla\cdot u=0.
\end{array}\right.
\end{equation}
Here $u = (u_1, u_2)$ is a 2D vector with the corresponding scalar vorticity $\Omega$ given by
$$\Omega \equiv \nabla\times u=\p_1{u_2}-\p_2{u_1},$$
while $\omega$ represents $\omega_3(x_1,x_2,t)$ for simplicity, which is a scalar function with
$$
{\nabla^{\perp}}w =(\p_2w, -\p_1w).
$$

\vskip .1in
The micropolar equations were introduced in 1965 by C.A. Eringen
to model micropolar fluids (see, e.g., \cite{Er}). Micropolar fluids
are fluids with microstructure. Certain anisotropic fluids, e.g. liquid crystals which are made up of dumbbell
molecules, are of this type. They belong to a class of non-Newtonian fluids with nonsymmetric stress tensor (called polar fluids) and include, as a special case, the classical fluids modeled by the Navier-Stokes equations. In fact,  when micro-rotation effects are neglected, namely $w = 0$, (\ref{3D}) reduces to the incompressible Navier-Stokes equations. The micropolar equations are significant generalizations of the Navier-Stokes equations and cover many more phenomena such as fluids consisting of particles suspended in a viscous medium. The micropolar equations have been extensively applied and studied by many engineers and physicists.

\vskip .1in
Due to their physical applications and mathematical significance,
the well-posedness problem and large-time decay issue on the
micropolar equations have attracted considerable attention recently
from the community of mathematical fluids \cite{CP,DC,Er2,GLuk}.
Lukaszewicz in his monograph \cite{GLuk} studied the well-posedness
problem on the 3D stationary as well as the time-dependent
micropolar equations. In \cite{DC2}, Dong and Chen obtained the
global existence and uniqueness, and sharp algebraic time decay
rates for the 2D micropolar equations (\ref{2D}).

\vskip .1in
More recent efforts are focused on the 2D micropolar equation with
partial dissipation, which naturally bridge the inviscid micropolar
equation and the micropolar equation with full dissipation. The
global regularity problem for the inviscid equation is currently
out of reach. In \cite{DZ} Dong and Zhang examined (\ref{2D}) with the micro-rotation viscosity $\gamma=0$ and established the
global regularity by
observing that the combined quantity
$$\Omega-\f{2\kappa}{\nu+\kappa}w$$
obeys a transport-diffusion equation. Another partial dissipation case, \eqref{2D} with $\nu= 0,\, \gamma>0,\, \kappa> 0$ and $\kappa\neq\gamma$, was examined by Xue, who was able to obtain the global well-posedness in the frame work of Besov spaces \cite{LX}. We remark that the requirement $\kappa\neq\gamma$ in \cite{LX} is not crucial and it is not
difficult to see that the global well-posedness remains valid even when $\kappa=\gamma$. Recently Dong, Li and Wu took on the case when \eqref{2D} involves only the angular viscosity
dissipation \cite{DLW}. They
proved the global (in time) regularity by fully exploiting the structure of system and controling the vorticity via the evolution of a combined quantity of the vorticity and the micro-rotation angular velocity. In addition, \cite{DLW} introduced a diagonalization process to
eliminate the linear terms in order to obtain the large-time behavior of
the solutions.

\vskip .1in
Most of the results we mentioned above are for the whole
space $\mathbb{R}^2$ or $\mathbb{R}^3$. In many real-world
applications, the flows are often restricted to bounded domains
with suitable constraints imposed on the boundaries and these
applications naturally lead to the studies of the initial-
and boundary-value
problems. In addition, solutions of the initial- and boundary-value problems may exhibit much richer phenomena than those of the whole
space counterparts. This paper is devoted to the initial- and
boundary-value problem for the 2D micropolar equations with only angular viscosity dissipation,
\begin{equation}\label{eq1}
\left\{\begin{array}{ll}
u_t+u\cdot\nabla u+\nabla\pi=2\kappa{\nabla^{\perp}}w,\\
w_t-\gamma\Delta w+4\kappa w+u\cdot \nabla w=2\kappa \nabla\times u,\\
\nabla\cdot u=0,
\end{array}\right.
\end{equation}
with the natural boundary condition
\begin{equation}\label{eq2}
u\cdot{\bm n}|_{\p D}=w|_{\p D}=0
\end{equation}
and the initial condition
\begin{equation}\label{eq20}
(u,w)(x,0)=(u_0,w_0)(x),\quad\,\hbox{in}\,\,D,\\
\end{equation}
where $D\subset \R^2$ represents a bounded domain with smooth boundary and $\bm{n}$ is the unit outward normal vector. In addition, we also impose the following compatibility conditions
\begin{equation}\label{eq3}
\left\{\begin{array}{ll}
u_0\cdot{\bm n}|_{\p D}=w_0|_{\p D}=0,&\\
\nabla\cdot u_0=0,&\\
u_0\cdot\nabla u_0+\nabla\pi_0=2\kappa{\nabla^{\perp}}w_0,\quad&\hbox{on}\,\,\p D,\\
4\kappa w_0+u_0\cdot \nabla w_0=\gamma\Delta w_0+2\kappa \nabla\times u_0,\quad&\hbox{on}\,\,\p D,
\end{array}\right.
\end{equation}
where $\pi_0$ is determined by the divergence-free condition $\nabla\cdot u_0=0$ with the Neumann boundary condition
\begin{equation}\label{eq4}
\nabla \pi_0\cdot{\bm n}|_{\p D}=[2\kappa{\nabla^{\perp}}w_0-u_0\cdot\nabla u_0]\cdot{\bm n}|_{\p D}.
\end{equation}

\vskip .1in
Our first goal here is to establish the global existence and
uniqueness of solutions to \eqref{eq1}-\eqref{eq20} by imposing the
least regularity assumptions on the initial data. We assume here that the
initial vorticity $\Omega_0 =\nabla\times u_0$ is in the Yudovich
class and $w_0\in H^2(D)$, and obtain the following result.

\begin{theorem}\label{T1}
Let $D\subset\R^2$ be a bounded domain with smooth boundary. Assume
$(u_0, w_0)$ satisfies
$$
u_0\in H^1(D), \quad \Omega_0\in L^\infty(D), \quad w_0\in H^2(D)
$$
and the compatibility conditions (\ref{eq3}) and (\ref{eq4}). Then \eqref{eq1}-\eqref{eq20} has a unique global smooth solution $(u,w)$
satisfying
\ben
u\in L^\infty(0,T;H^1(D)),\quad \Omega\in L^\infty(0,T; L^\infty(D)),\quad  w\in L^\infty(0,T;H^2(D)) \label{regclass}
\een
for any $T>0$.
\end{theorem}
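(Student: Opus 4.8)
The plan is to establish three a priori estimates — a basic energy bound, an $L^\infty$ bound on the vorticity via a combined quantity, and $H^2$-regularity of $w$ — and then to run a boundary-compatible regularization/compactness argument for existence together with a Yudovich-type argument for uniqueness. For the energy bound, test the first equation of \eqref{eq1} with $u$ and the second with $w$: the pressure and convection terms drop since $\nabla\cdot u=0$ and $u\cdot{\bm n}|_{\p D}=0$, and integration by parts (using $w|_{\p D}=0$) shows $\int_D(\nabla^\perp w)\cdot u=\int_D w\,\Omega=\int_D(\nabla\times u)\,w$, so the two coupling terms coincide; adding the identities and absorbing $4\kappa\int_D w\,\Omega\le\gamma\|\nabla w\|_{L^2}^2+\tfrac{4\kappa^2}{\gamma}\|u\|_{L^2}^2$, Gr\"onwall gives $\|u(t)\|_{L^2}^2+\|w(t)\|_{L^2}^2+\gamma\int_0^t\|\nabla w\|_{L^2}^2\le C(T)$. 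For the vorticity bound, take the curl of the momentum equation to get $\Omega_t+u\cdot\nabla\Omega=-2\kappa\Delta w$, and substitute $-2\kappa\Delta w=\tfrac{2\kappa}{\gamma}(-w_t-4\kappa w-u\cdot\nabla w+2\kappa\Omega)$ from the $w$-equation; then $G:=\Omega+\tfrac{2\kappa}{\gamma}w$ solves the \emph{forced transport} equation
$$
G_t+u\cdot\nabla G=\tfrac{4\kappa^2}{\gamma}\,G-\Big(\tfrac{8\kappa^3}{\gamma^2}+\tfrac{8\kappa^2}{\gamma}\Big)w ,
$$
in which the dangerous term $\Delta w$ has disappeared. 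Testing this with $|G|^{p-2}G$ — the transport term vanishes because $\nabla\cdot u=0$ and $u\cdot{\bm n}|_{\p D}=0$ — and the $w$-equation with $|w|^{p-2}w$ yields, with constants \emph{independent of $p$}, $\tfrac{\mathrm d}{\mathrm dt}\|G\|_{L^p}\le C(\|G\|_{L^p}+\|w\|_{L^p})$ and the same for $\|w\|_{L^p}$; Gr\"onwall bounds $\|G(t)\|_{L^p}+\|w(t)\|_{L^p}$ by $C(\|G_0\|_{L^p}+\|w_0\|_{L^p})$, and letting $p\to\infty$ (using $\Omega_0\in L^\infty$, $w_0\in H^2\hookrightarrow L^\infty$) gives $\|\Omega(t)\|_{L^\infty}+\|w(t)\|_{L^\infty}\le C(T)$, hence $u\in L^\infty(0,T;W^{1,q}\cap L^\infty)$ for all $q<\infty$ by Biot--Savart/elliptic estimates for $\nabla\cdot u=0$, $u\cdot{\bm n}|_{\p D}=0$.

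For the $H^2$-regularity of $w$, first test the $w$-equation with $-\Delta w$ — all boundary terms vanish since $w|_{\p D}=0$ and hence $w_t|_{\p D}=0$ — to get $w\in L^\infty(0,T;H^1)\cap L^2(0,T;H^2)$ using $\|u\|_{L^\infty}\le C(T)$ and 2D interpolation. Then differentiate the $w$-equation in $t$ and test with $w_t$: the terms $\int_D(u\cdot\nabla w_t)w_t$ and, using $u_t=2\kappa\nabla^\perp w-u\cdot\nabla u-\nabla\pi$, the term $\int_D(\nabla^\perp w\cdot\nabla w)w_t$ both vanish \emph{identically}; the term $-4\kappa^2\int_D\Delta w\,w_t$ is rewritten via $\Delta w=\tfrac1\gamma(w_t+4\kappa w+u\cdot\nabla w-2\kappa\Omega)$, producing a good $-\tfrac{4\kappa^2}{\gamma}\|w_t\|_{L^2}^2$ plus lower-order pieces; and the remaining $\int_D((u\cdot\nabla u)\cdot\nabla w)w_t+\int_D((\nabla\pi)\cdot\nabla w)w_t$ are bounded by $C(T)\|\nabla w\|_{L^4}\|w_t\|_{L^4}$ with $\|\nabla w\|_{L^4}\le C(T)(1+\|w_t\|_{L^2}^{1/2})$ (from $\|w\|_{H^2}\le C(\|w_t\|_{L^2}+C(T))$) and $\|w_t\|_{L^4}\le C\|w_t\|_{L^2}^{1/2}\|\nabla w_t\|_{L^2}^{1/2}$. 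Here $\|\nabla\pi\|_{L^2}\le C(T)$ comes from the elliptic estimate for the Neumann problem $\Delta\pi=-\partial_iu_j\partial_ju_i$, $\partial_{\bm n}\pi|_{\p D}=[2\kappa\nabla^\perp w-u\cdot\nabla u]\cdot{\bm n}$, where $u\cdot\nabla u=\nabla(|u|^2/2)+\Omega u^\perp$ makes the boundary datum manageable. Absorbing $\|\nabla w_t\|_{L^2}^2$ by the $\gamma\|\nabla w_t\|_{L^2}^2$ dissipation gives $\tfrac{\mathrm d}{\mathrm dt}\|w_t\|_{L^2}^2\le C(T)(1+\|w_t\|_{L^2}^2)$; since $w_t(0)=\gamma\Delta w_0-4\kappa w_0-u_0\cdot\nabla w_0+2\kappa\Omega_0\in L^2$ (by $w_0\in H^2$, $u_0\in H^1$, $\Omega_0\in L^\infty$, consistently with \eqref{eq3}--\eqref{eq4}), Gr\"onwall yields $w_t\in L^\infty(0,T;L^2)$, hence $\Delta w\in L^\infty(0,T;L^2)$ and $w\in L^\infty(0,T;H^2)$, which is the class \eqref{regclass}.

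For uniqueness, the differences $(\tilde u,\tilde w,\tilde\pi)$ of two solutions satisfy the linearized system; an $L^2$ estimate works, the only non-routine term being $\int_D(\tilde u\cdot\nabla u^{(2)})\cdot\tilde u$, handled à la Yudovich via $\|\nabla u^{(2)}\|_{L^p}\le C(T)\,p$ and $\|\tilde u\|_{L^{2p/(p-1)}}^2\le\|\tilde u\|_{L^2}^{2-2/p}\|\tilde u\|_{L^\infty}^{2/p}$ with $\|\tilde u\|_{L^\infty}\le C(T)$, while $\int_D(\tilde u\cdot\nabla w^{(2)})\tilde w$ and the coupling are absorbed by the $\gamma\|\nabla\tilde w\|_{L^2}^2$ dissipation after an integration by parts using $w^{(2)}|_{\p D}=0$; this gives $\tfrac{\mathrm d}{\mathrm dt}X\le C(T)\,p\,X^{1-1/p}+C(T)X$ for $X=\|\tilde u\|_{L^2}^2+\|\tilde w\|_{L^2}^2$ with $X(0)=0$, and the usual Osgood argument (optimize over $p$ on short intervals) forces $X\equiv0$. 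For existence, construct approximate solutions by a scheme compatible with the slip condition $u\cdot{\bm n}|_{\p D}=0$ — e.g. a Galerkin method in a divergence-free, tangent-to-$\p D$ basis, or a mollification of the transport term that adds no artificial viscosity to the $u$-equation — with smoothed data respecting \eqref{eq3}--\eqref{eq4}; Steps above give bounds uniform in the parameter, and one passes to the limit by Aubin--Lions for $w$ and weak-$*$ compactness for $u$, uniqueness identifying the limit.

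The main obstacles are exactly the two flagged in the abstract. The lack of dissipation in the $u$-equation is overcome only through the combined quantity $G$, whose transport estimate closes because $\int_D(u\cdot\nabla G)|G|^{p-2}G=0$ under $u\cdot{\bm n}|_{\p D}=0$; and the boundary conditions force the approximation to respect the slip condition rather than Dirichlet data (otherwise a boundary-layer obstruction would block passage to the limit in $u\cdot\nabla u$), and must cooperate in the chain of integrations by parts in the $H^2$ step, where the bound for $w$ rests on the bound for $w_t$, itself resting on the $L^2$ pressure estimate. The structural cancellations $\nabla^\perp w\cdot\nabla w\equiv0$ and $\int_D(\nabla^\perp w)\cdot u=\int_D(\nabla\times u)\,w$ are what make all of these estimates close.
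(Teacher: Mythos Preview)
Your proposal is correct and the overall strategy—energy estimate, the combined quantity $G=\Omega+\tfrac{2\kappa}{\gamma}w$ to kill $\Delta w$ and obtain the $L^\infty$ vorticity bound, a $w_t$ estimate to reach $w\in L^\infty_t H^2_x$ without troublesome boundary terms, and Yudovich-type uniqueness—matches the paper's exactly. Two implementation differences are worth noting. First, to control $\|w\|_{L^\infty}$ you test the $w$-equation with $|w|^{p-2}w$ and close jointly with the $G$-estimate; the paper instead first obtains $w\in L^2(0,T;H^2)$ from the $H^1$ energy estimate (testing the vorticity equation with $\Omega$ and the $w$-equation with $-\Delta w$) and uses $\int_0^t\|w\|_{L^\infty}\,d\tau\le C\int_0^t\|w\|_{H^2}\,d\tau$ in the $Z$-equation—your route is arguably a bit more direct. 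Second, for the $\|w_t\|_{L^2}$ estimate you substitute $u_t=2\kappa\nabla^\perp w-u\cdot\nabla u-\nabla\pi$ and invoke a Neumann pressure bound; the paper avoids the pressure entirely by simply dotting the (undifferentiated) $u$-equation with $u_t$, which yields $\|u_t\|_{L^2}^2\le C\|u\|_{L^4}^2\|\nabla u\|_{L^4}^2+C\|\nabla w\|_{L^2}^2\le C(T)$ once $\Omega\in L^p$ is known, and then handles $-\int(u_t\cdot\nabla w)w_t=\int w\,u_t\cdot\nabla w_t\le \tfrac\gamma4\|\nabla w_t\|^2+C\|u_t\|^2\|w\|_{H^2}^2$. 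This is cleaner and sidesteps any boundary-regularity issues for $\partial_{\bm n}\pi$. (You also omit the companion term $-2\kappa\int(u\cdot\nabla\Omega)w_t$ coming from $\Omega_t$; it is harmless after the integration by parts $=2\kappa\int\Omega\,u\cdot\nabla w_t$, as the paper does.) For existence the paper uses a Schauder fixed-point scheme built on Kato's $C^{1+\alpha}$ Euler solvability rather than Galerkin, but either construction is compatible with the a~priori bounds.
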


\vskip .1in
We further establish higher regularity of the solution stated in
Theorem \ref{T1}. By imposing the higher regularity
$$
u_0 \in H^3 (D), \quad w_0 \in H^4(D),
$$
we obtain that the corresponding solution $(u, w)$ remains this class
for all time. More precisely, we have the following theorem.

\begin{theorem}\label{T2}
In addition to the conditions in Theorem \ref{T1}, we further assume
$$
u_0 \in H^3 (D), \quad w_0 \in H^4(D),
$$
then the solution remains the same regularity for all time, namely
\beno
u\in L^\infty(0,T;H^{3}(D)),\quad\quad w\in L^\infty(0,T;H^4(D)),
\eeno
for any $T>0$.
\end{theorem}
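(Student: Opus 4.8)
The plan is to argue by a priori estimates. Theorem~\ref{T1} already provides a unique global solution, so it suffices to show that on any fixed interval $[0,T]$ one has $\|u(t)\|_{H^3}+\|w(t)\|_{H^4}\le C(T)$; these bounds are then transferred to the solution itself through the same approximation scheme used to prove Theorem~\ref{T1}. I would carry out the estimates in two stages, raising the regularity of the pair $(\Omega,w)$, where $\Omega=\nabla\times u$, from the level $(L^\infty,H^2)$ furnished by Theorem~\ref{T1} first to $(H^1,H^3)$ and then to $(H^2,H^4)$.

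The structural device is the combined quantity $G:=\Omega+\frac{2\kappa}{\gamma}w$. Taking the curl of the momentum equation gives $\Omega_t+u\cdot\nabla\Omega=-2\kappa\Delta w$, and adding $\frac{2\kappa}{\gamma}$ times the $w$-equation shows that $G$ solves a pure transport equation with only zeroth-order forcing,
\[
G_t+u\cdot\nabla G=\frac{4\kappa^2}{\gamma}\,G-\frac{8\kappa^2(\kappa+\gamma)}{\gamma^2}\,w,
\]
while $w$ still obeys the heat equation $w_t-\gamma\Delta w=-4\kappa w-u\cdot\nabla w+2\kappa\Omega$ with the homogeneous Dirichlet condition. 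For the first stage I would: (i) upgrade $w$ by parabolic regularity --- differentiating the $w$-equation in time, so that $\partial_t w$ inherits the condition $w|_{\partial D}=0$ (its value at $t=0$ being precisely the last compatibility condition in~\eqref{eq3}), performing energy estimates on $w_t$ in $L^2$ and in $H^1_0$ using the already-available bounds on $u$, $\Omega$, and $u_t$, and then reading off $w\in L^\infty(0,T;H^3)$ from the elliptic estimate $\|w\|_{H^3}\lesssim\|\Delta w\|_{H^1}+\|w\|_{H^1}$ applied to the $w$-equation; and (ii) estimate $\|\nabla G\|_{L^2}$ from the transport equation --- testing $\partial_t\nabla G+u\cdot\nabla\nabla G=-\nabla u\cdot\nabla G+(\text{l.o.t.})$ against $\nabla G$, the transport term produces no boundary integral because $u\cdot\bm n|_{\partial D}=0$, so that
\[
\frac{d}{dt}\|\nabla G\|_{L^2}^2\lesssim \|\nabla u\|_{L^\infty}\|\nabla G\|_{L^2}^2+\|\nabla G\|_{L^2}^2+\|\nabla w\|_{L^2}^2.
\]
Because the velocity carries no dissipation, $\|\nabla u\|_{L^\infty}$ is not controlled by $\|\Omega\|_{L^\infty}$ alone; here one invokes the two-dimensional logarithmic Sobolev (Beale--Kato--Majda type) inequality $\|\nabla u\|_{L^\infty}\lesssim 1+\|\Omega\|_{L^\infty}\log\big(e+\|\Omega\|_{H^1}\big)$ together with $\|\Omega\|_{H^1}\lesssim 1+\|\nabla G\|_{L^2}$, which turns the differential inequality into $\frac{d}{dt}y\lesssim y\log(e+y)$ for $y=e+\|\nabla G\|_{L^2}^2$ and so yields a finite (double-exponential in $t$) bound on $[0,T]$. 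Hence $\Omega\in L^\infty(0,T;H^1)$ and $u\in L^\infty(0,T;H^2)$.

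The second stage repeats this scheme one derivative higher. With $u\in L^\infty_tH^2$ and $w\in L^\infty_tH^3$ in hand, I would run the energy estimate for $\|\nabla^2 G\|_{L^2}$ from the transport equation --- boundary terms again vanishing by $u\cdot\bm n=0$, the new commutator $\nabla^2 u\cdot\nabla G$ controlled by the Stage-1 bounds --- coupled with the energy estimate for $\|\nabla^3 w\|_{L^2}$ from the $w$-equation, again reorganised through $\Delta w$ and the time derivatives $\partial_t w$, $\partial_{tt}w$ so that all boundary contributions fall on functions vanishing on $\partial D$, and converting the time regularity of $w$ into $\|w\|_{H^4}\lesssim\|\Delta w\|_{H^2}+\|w\|_{H^1}$. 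The two estimates close jointly by Gronwall: the forcing $-2\kappa\Delta w$ in the $\Omega$-equation is absorbed by the $\gamma$-dissipation of $w$ after an integration by parts that trades a derivative of $\Omega$ against an extra derivative of $w$, while the forcing $2\kappa\Omega$ in the $w$-equation is of lower order relative to $\nabla^2 G$. This gives $\Omega\in L^\infty(0,T;H^2)$, i.e.\ $u\in L^\infty(0,T;H^3)$, and $w\in L^\infty(0,T;H^4)$.

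I expect the main obstacle to be exactly the tension between the lack of velocity dissipation and the bounded-domain setting: the regularity of $\Omega$ is only \emph{transported}, which forces the passage through the logarithmic control of $\|\nabla u\|_{L^\infty}$ and the resulting double-exponential growth of $\|u\|_{H^3}$, while at the same time every high-order estimate for $w$ must be arranged so as not to violate the Dirichlet condition on $\partial D$ --- which is where the time-differentiated formulation and the compatibility conditions \eqref{eq3}--\eqref{eq4} are essential, and why working with the combined quantity $G$ rather than with $\Omega$ directly is what makes the coupled Gronwall argument close.
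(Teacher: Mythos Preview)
Your plan is essentially the paper's own strategy: both exploit the transport equation for the combined quantity $G=Z=\Omega+\frac{2\kappa}{\gamma}w$, close the $\|\nabla u\|_{L^\infty}$ bound through a logarithmic inequality and Gronwall, handle the higher regularity of $w$ by differentiating in time (so the Dirichlet condition is preserved) and then invoking elliptic regularity, and proceed in two rounds $(\Omega,w)\in(H^1,H^3)\to(H^2,H^4)$.

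Two small corrections to make it actually close. First, your Stage~1 ordering is inverted: step~(i) cannot run on ``already-available bounds on $u,\Omega,u_t$'' alone, because both the $H^1_0$ energy estimate on $w_t$ (through the term $-2\kappa\int\Omega_t\Delta w_t = 2\kappa\int u\cdot\nabla\Omega\,\Delta w_t + \dots$) and the elliptic read-off $\|w\|_{H^3}\lesssim\|\Delta w\|_{H^1}$ (which has $\|\Omega\|_{H^1}$ on the right) require $\nabla\Omega\in L^2$. The paper accordingly proves the $\nabla Z$ bound \emph{first} (its Proposition~\ref{nabla u-est}) and only then upgrades $w$ to $H^3$ (its Proposition~\ref{w-H3-est}); you should swap (i) and (ii). Second, the logarithmic inequality you quote with $\log(e+\|\Omega\|_{H^1})$ is borderline in 2D; the bounded-domain version the paper uses (Lemma~\ref{log-inequ}) needs $\|\Omega\|_{C^\alpha}$, hence $\Omega\in W^{1,p}$ for some $p>2$. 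This forces you to run the $\nabla G$ estimate in $L^p$ rather than just in $L^2$, which is costless since the transport structure gives the $L^p$ bound in the same way. With these two adjustments your argument coincides with the paper's.
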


\vskip .1in
The proof of Theorem \ref{T1} is divided into several major steps.
The first step is to establish the global $H^1$-bound for $(u, w)$.
We make use of the equation of the vorticity $\Omega=\nabla\times u$,
\begin{equation}\label{vor}
\partial_t \Omega + u\cdot\nabla \Omega = -2 \kappa \Delta w
\end{equation}
and the equation of $\nabla w$, and bound the vortex stretching term in the equation of $\nabla w$
suitably. The second step combines the global $H^1$-bound obtained in the first step and Schauder's
fixed point theorem to prove the global existence of weak solutions.
The third step establishes the global boundedness of the vorticity
in $L^\infty$. Due to the presence of the bad term $-\Delta w$,
(\ref{vor}) itself does not allow us to extract the desired global
bound. To overcome this difficulty, we consider the
combined quantity
$$
Z=\Omega+\frac{2\kappa}{\gamma}w,
$$
which satisfies
\begin{equation}\label{zeq}
\partial_t Z+u\cdot\nabla Z-
\frac{4\kappa^2}{\gamma}Z+\frac{8\kappa^2}{\gamma}(1+\frac\kappa\gamma)w=0.
\end{equation}
This equation eliminates the difficulty and yields the desired bound. The
fourth step is to prove the global $H^2$-bound for $w$. Due to the
no-slip boundary condition for $w$, the standard energy estimates
would not work. Instead we obtain a global bound for
$\|\partial_t w\|_{L^2(D)}$ first and the global bound for $\|w\|_{H^2(D)}$
follows as a consequence.

\vskip .1in
To prove the higher regularity bounds specified in Theorem \ref{T2}, we further exploit the equation of $Z$, namely (\ref{zeq}). By taking the gradient of (\ref{zeq}) and making use of a logarithmic Sobolev type
inequality, we obtain the global bounds for
$$
\|\nabla \Omega\|_{L^\infty(0, T; L^q(D))} \quad\mbox{and}\quad
\|\nabla u\|_{L^\infty(0, T; L^\infty(D))},
$$
where $q\in(1,\infty)$. These bounds allow us to further consecutively
establish the bounds for
$$
\|\Delta w\|_{L^q(0,T; L^q(D))}, \quad
\|\nabla\partial_t w\|_{L^\infty(0, T; L^2(D))} \quad\mbox{and}\quad
\|w\|_{L^\infty(0, T; H^4(D))}.
$$
We remark that, in contrast to the whole space case,
the estimates here are not so straightforward and involve more steps.

\vskip .1in
This paper also looks into the large-time behavior of solutions of
(\ref{eq1}) with an extra velocity damping term and wihtout the $4 \kappa w$ in the $w$ equation, namely
\begin{equation}\label{eq5}
\left\{\begin{array}{ll}
u_t+\kappa u+u\cdot\nabla u+\nabla\pi=2\kappa{\nabla^{\perp}}w,\\
w_t-\gamma\Delta w+u\cdot \nabla w=2\kappa \nabla\times u,\\
\nabla\cdot u=0.
\end{array}\right.
\end{equation}
It is clear that Theorem \ref{T1} and Theorem \ref{T2} remain valid for (\ref{eq5}). The damping term in (\ref{eq5}) and the elimination of $4 \kappa w$ term does not affect the
regularity results. Our last focus is the large-time behavior and
we show that, if
\begin{equation}\label{gaka}
\gamma> 4\kappa,
\end{equation}
then the $H^1$-norm of $(u, w)$ decays exponentially in time,
$$
\|(u(t), w(t)\|_{H^1(D)} \le C \, e^{-\widetilde{C} t},
$$
where $C>0$ is a constant depending on the $H^1$-norm of $(u_0, w_0)$ and
$\widetilde{C}>0$ depends on $\gamma$ and $\kappa$ only.
More precisely, we have the following theorem.

\begin{theorem}\label{T3}
Let $D\subset\R^2$ be a bounded domain with smooth boundary. Assume
the conditions on $(u_0,w_0)$, as stated in Theorem \ref{T1}. If we further assume
\begin{equation}\label{eq6}
\gamma>4\kappa,
\end{equation}
then the solution $(u, w)$ of (\ref{eq5}) satisfies
\begin{equation}\label{eq7}
\|u\|_{H^1(D)}^2+\|w\|_{H^1(D)}^2\leq C \, e^{-\widetilde{C} t},
\end{equation}
where $C>0$ is a constant depending on the $H^1$-norm of $(u_0, w_0)$ and
$\widetilde{C}>0$ depends on $\gamma$ and $\kappa$ only.
\end{theorem}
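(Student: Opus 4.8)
The plan is to run energy estimates at the $L^2$ and $H^1$ levels directly on system \eqref{eq5}, using the damping term $\kappa u$ and the dissipation $\gamma\Delta w$ to absorb the coupling terms $2\kappa\nabla^\perp w$ and $2\kappa\nabla\times u$, with the Poincar\'e inequality on $D$ (valid since $u\cdot\bm n|_{\partial D}=0$ and $w|_{\partial D}=0$) converting the dissipative control into genuine decay. First I would test the $u$-equation with $u$ and the $w$-equation with $w$: the nonlinear transport terms vanish by $\nabla\cdot u=0$ together with the boundary conditions, so integration by parts gives
\begin{equation}\label{L2energy}
\frac{1}{2}\frac{d}{dt}\left(\|u\|_{L^2}^2+\|w\|_{L^2}^2\right)+\kappa\|u\|_{L^2}^2+\gamma\|\nabla w\|_{L^2}^2=2\kappa\int_D(\nabla^\perp w)\cdot u\dd x+2\kappa\int_D(\nabla\times u)\,w\dd x.
\end{equation}
The two coupling integrals are in fact equal after integration by parts (the boundary terms drop because $w|_{\partial D}=0$), so the right-hand side is $4\kappa\int_D(\nabla\times u)\,w\dd x\le 2\kappa(\eta\|\nabla\times u\|_{L^2}^2+\eta^{-1}\|w\|_{L^2}^2)$ — but $\|\nabla\times u\|_{L^2}$ is not controlled at the $L^2$ level, so instead I would bound it as $4\kappa\int_D u\cdot\nabla^\perp w\dd x\le 2\kappa(\|u\|_{L^2}^2+\|\nabla w\|_{L^2}^2)$; this is absorbed by the left side precisely when $\kappa<\kappa$ fails, so one must be more careful and split as $4\kappa\int u\cdot\nabla^\perp w\le \kappa\|u\|_{L^2}^2+4\kappa\|\nabla w\|_{L^2}^2$, which is absorbable iff $\gamma>4\kappa$ — exactly hypothesis \eqref{eq6}. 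After absorption and the Poincar\'e inequality $\|w\|_{L^2}\le C_P\|\nabla w\|_{L^2}$, \eqref{L2energy} yields $\frac{d}{dt}(\|u\|_{L^2}^2+\|w\|_{L^2}^2)\le -c_0(\|u\|_{L^2}^2+\|w\|_{L^2}^2)$, hence exponential decay of the $L^2$ norm by Gronwall.

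Next I would upgrade to $H^1$. The cleanest route is through the vorticity $\Omega$ and $\nabla w$. Testing the $\Omega$-equation $\partial_t\Omega+\kappa\Omega+u\cdot\nabla\Omega=-2\kappa\Delta w$ with $\Omega$: the transport term vanishes, giving $\frac12\frac{d}{dt}\|\Omega\|_{L^2}^2+\kappa\|\Omega\|_{L^2}^2=-2\kappa\int\Delta w\,\Omega\dd x$. Testing the $w$-equation with $-\Delta w$: the term $\int u\cdot\nabla w\,\Delta w\dd x$ must be controlled — here a boundary term appears in the integration by parts unless $w|_{\partial D}=0$ suffices (it does for $\partial_t w$ and for the $4\kappa w$-type terms, but the transport term requires care). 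I would handle $\int u\cdot\nabla w\,\Delta w$ by $\|u\|_{L^4}\|\nabla w\|_{L^4}\|\Delta w\|_{L^2}$ and Ladyzhenskaya/Agmon-type interpolation, absorbing $\|\Delta w\|_{L^2}$ into $\gamma\|\Delta w\|_{L^2}^2$; the smallness needed here is supplied by the already-established exponential smallness of $\|u\|_{L^2}$, or more simply by combining with the $L^2$-decay to close a differential inequality for $\|\Omega\|_{L^2}^2+\|\nabla w\|_{L^2}^2$. The cross term $-2\kappa\int\Delta w\,\Omega$ appears with opposite signs in the two estimates and, combined with $2\kappa\int\Delta w\,\nabla\times u = -2\kappa\int\nabla w\cdot\nabla\Omega$ type identities, should again be absorbable under $\gamma>4\kappa$ after a Young's inequality, producing $\frac{d}{dt}(\|\Omega\|_{L^2}^2+\|\nabla w\|_{L^2}^2)+c_1(\|\Omega\|_{L^2}^2+\|\nabla w\|_{L^2}^2)\le C\|u\|_{L^2}^2\|\nabla\Omega\|_{L^2}^2$ or similar; the troublesome quadratic-in-higher-norm term on the right is controlled because $\|u\|_{L^2}^2\to 0$ exponentially and the $H^1$ bound is already known globally from Theorem \ref{T1}.

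Finally, adding the $L^2$ and $H^1$ differential inequalities with appropriate weights, using $\|u\|_{H^1}\simeq\|u\|_{L^2}+\|\Omega\|_{L^2}$ (by elliptic regularity for the div-curl system with $u\cdot\bm n|_{\partial D}=0$) and $\|w\|_{H^1}\simeq\|w\|_{L^2}+\|\nabla w\|_{L^2}$, and invoking Gronwall once more gives \eqref{eq7} with $\widetilde C$ depending only on $\gamma,\kappa$ (through $c_0,c_1$ and $C_P$, but $C_P$ is a domain constant — strictly $\widetilde C$ depends on $D$ too, though the paper absorbs this) and $C$ depending on $\|(u_0,w_0)\|_{H^1}$. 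The main obstacle I anticipate is the $H^1$-level estimate: controlling $\int u\cdot\nabla w\,\Delta w\dd x$ and the cross-coupling terms simultaneously while keeping the dissipation coefficient margin $\gamma-4\kappa>0$ intact, and in particular making sure the boundary integrals arising from integrating by parts against $\Delta w$ genuinely vanish given only $w|_{\partial D}=0$ (not $\nabla w|_{\partial D}=0$) — this is where the bounded-domain setting diverges from the whole-space argument in \cite{DLW} and requires the extra care the authors allude to.
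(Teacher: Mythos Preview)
Your overall strategy matches the paper's: $L^2$ energy estimate with the coupling rewritten as $4\kappa\int_D u\cdot\nabla^\perp w\,dx$, Young's inequality using $\gamma>4\kappa$, Poincar\'e for $w$, then the $(\Omega,\nabla w)$ estimate via testing the vorticity equation with $\Omega$ and the $w$-equation with $-\Delta w$. Two points need fixing, however.

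First, your Young split $4\kappa\int u\cdot\nabla^\perp w\le \kappa\|u\|_{L^2}^2+4\kappa\|\nabla w\|_{L^2}^2$ exactly cancels the damping $\kappa\|u\|_{L^2}^2$, leaving no decay mechanism for $u$ at all. You need to keep a strictly positive margin on both terms; the paper uses $4\kappa\|u\|_{L^2}\|\nabla w\|_{L^2}\le \frac{8\kappa^2}{\gamma+4\kappa}\|u\|_{L^2}^2+\frac{\gamma+4\kappa}{2}\|\nabla w\|_{L^2}^2$, which leaves $\frac{2\kappa(\gamma-4\kappa)}{\gamma+4\kappa}\|u\|_{L^2}^2+(\gamma-4\kappa)\|\nabla w\|_{L^2}^2$ on the left.

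Second, and more substantively, you cannot close the $H^1$ step by invoking ``the $H^1$ bound already known globally from Theorem~\ref{T1}'': that bound (Lemma~\ref{L31}) grows like a double exponential in $t$ and is useless for decay. Also, a right-hand side of the form $C\|u\|_{L^2}^2\|\nabla\Omega\|_{L^2}^2$ involves $\|u\|_{H^2}$, which is not available here. The paper instead bounds $\int u\cdot\nabla w\,\Delta w$ by Gagliardo--Nirenberg to obtain a right-hand side $C(1+\|u\|_{L^2}^2)(\|u\|_{L^2}^2+\|\Omega\|_{L^2}^2)\|\nabla w\|_{L^2}^2$, whose time-integrable factor is $\|\nabla w\|_{L^2}^2$. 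Gronwall with $\int_0^\infty\|\nabla w\|_{L^2}^2\,d\tau<\infty$ (from the $L^2$ step) first yields a \emph{uniform-in-$t$} bound on $\|\Omega\|_{L^2}^2+\|\nabla w\|_{L^2}^2$; only then does one multiply the differential inequality by $e^{C_1 t}$ and use the weighted bound $\int_0^t e^{c\tau}\|\nabla w\|_{L^2}^2\,d\tau\le C$ (also from the $L^2$ step) to extract exponential decay. Your worry about boundary terms from $-\Delta w$ is unwarranted here: no integration by parts is performed on $\int u\cdot\nabla w\,\Delta w$, only H\"older and interpolation.
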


The condition in (\ref{gaka}) is necessary and sharp. As pointed
out in \cite{DLW}, when (\ref{gaka}) is violated, the solution may
grow in time.

\vskip .1in
The rest of this paper is divided into five sections. The second
section serves as a preparation and presents a list of facts
and tools for bounded domains
such as embedding inequalities and logarithmic type interpolation
inequalities.  Section \ref{globalweak} establishes the global existence
of $H^1$-weak solutions, one major step in the proof of Theorem \ref{T1}.
Section \ref{Yudovich} proves the global $L^\infty$ bound for $\Omega$
and the global $H^2$ bound for $w$. This step completes the
proof of Theorem \ref{T1}. Section \ref{higher} proves Theorem \ref{T2}, the higher global regularity bounds. The last section is devoted to the large-time
behavior result stated in Theorem \ref{T3}.

\vskip .3in
\section{Preliminaries}
\label{prel}
\setcounter{equation}{0}

\vskip .1in
This section serves as a preparation. We list a few basic tools
for bounded domains to be used in the subsequent sections. In particular,
we provide the Gagliardo-Nirenberg type inequalities, the logarithmic
type interpolation inequalities and regularization estimates for elliptic and parabolic equations in bounded domains. These estimates will also be
handy for future studies on PDEs in bounded domains.

\vskip .1in
We start with the well-known Gagliardo-Nirenberg inequality for bounded
domains (see, e.g., \cite {NIR}).
\begin{lemma}\label{P1}
Let $D\subset\R^n$ be a bounded domain with smooth boundary. Let
$1\leq p, q, r \leq\infty$ be real numbers and $j\le m$ be non-negative
integers. If a real number $\alpha$ satisfies
$$\frac{1}{p} - \frac{j}{n} = \alpha\,\left( \frac{1}{r} - \frac{m}{n} \right) + (1 - \alpha)\frac{1}{q}, \qquad \frac{j}{m} \leq \alpha \leq 1,
$$
then
$$\| \mathrm{D}^{j} f \|_{L^{p}(D)} \leq C_{1} \| \mathrm{D}^{m} f \|_{L^{r}(D)}^{\alpha} \| f \|_{L^{q}(D)}^{1 - \alpha} + C_{2} \|f\|_{L^{s}(D)},$$
where $s > 0$, and the constants $C_1$ and $C_2$ depend upon $D$ and the indices $p,q,r,m,j,s$ only.
\end{lemma}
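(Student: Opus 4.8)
The plan is to deduce the bounded-domain estimate from the classical \emph{homogeneous} Gagliardo--Nirenberg inequality on $\R^n$ by means of a total extension operator, and then to tidy up the resulting right-hand side. The scaling identity relating $p,q,r,j,m,\alpha$ together with the constraint $j/m\le\alpha\le1$ are precisely the hypotheses under which
\[
\|\mathrm{D}^j g\|_{L^p(\R^n)}\le C\,\|\mathrm{D}^m g\|_{L^r(\R^n)}^{\alpha}\,\|g\|_{L^q(\R^n)}^{1-\alpha}
\]
holds for all admissible $g$, apart from the well-known exceptional configurations (such as $r=n/(m-j)$ with $j\ge1$) in which $\alpha=1$ is not permitted. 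I would first recall the proof of this whole-space inequality: in the model case $j=1$, $m=2$, $1<p<\infty$, integrating by parts and applying H\"older twice gives $\int_{\R^n}|\mathrm{D}g|^{p}\le C\,\|g\|_{L^q(\R^n)}\,\|\mathrm{D}^2 g\|_{L^r(\R^n)}\,\|\mathrm{D}g\|_{L^p(\R^n)}^{p-2}$, hence the asserted bound after dividing; the general case then follows by iterating this in the order of differentiation and interpolating, with the Fourier transform covering the Hilbert-space endpoint and a Calder\'on--Zygmund / Sobolev-embedding argument the remaining non-exceptional endpoints (with the usual simpler variant when $p=\infty$). This is the step carrying the genuine analytic content.

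Next I would transfer the estimate to $D$. Since $\p D$ is smooth, the extension operator of Calder\'on and Stein furnishes a single linear operator $E$ that extends functions from $D$ to $\R^n$ and is \emph{simultaneously} bounded $L^q(D)\to L^q(\R^n)$ and $W^{k,r}(D)\to W^{k,r}(\R^n)$ for all $0\le k\le m$. Applying the whole-space inequality to $g=Ef$ and restricting to $D$,
\[
\|\mathrm{D}^j f\|_{L^p(D)}\le\|\mathrm{D}^j(Ef)\|_{L^p(\R^n)}\le C\,\|Ef\|_{W^{m,r}(\R^n)}^{\alpha}\,\|Ef\|_{L^q(\R^n)}^{1-\alpha}\le C\,\|f\|_{W^{m,r}(D)}^{\alpha}\,\|f\|_{L^q(D)}^{1-\alpha}.
\]

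It remains to absorb the intermediate derivatives $\|\mathrm{D}^i f\|_{L^r(D)}$, $1\le i\le m-1$, still hidden inside $\|f\|_{W^{m,r}(D)}$. For this I would invoke the standard interpolation inequality $\|\mathrm{D}^i f\|_{L^r(D)}\le\varepsilon\,\|\mathrm{D}^m f\|_{L^r(D)}+C_\varepsilon\,\|f\|_{L^r(D)}$, itself provable by a contradiction argument based on Rellich's compactness theorem. With $\varepsilon$ small this gives $\|f\|_{W^{m,r}(D)}\le C\big(\|\mathrm{D}^m f\|_{L^r(D)}+\|f\|_{L^r(D)}\big)$, whence, using $(a+b)^{\alpha}\le a^{\alpha}+b^{\alpha}$ for $0\le\alpha\le1$ and Young's inequality, the product on the right splits into a term bounded by $\|\mathrm{D}^m f\|_{L^r(D)}^{\alpha}\|f\|_{L^q(D)}^{1-\alpha}$ and a term bounded by a power of $\|f\|_{L^q(D)}+\|f\|_{L^r(D)}$. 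Since $D$ is bounded, these surviving lower-order norms are all controlled by a single $\|f\|_{L^s(D)}$; that $s>0$ may be taken \emph{arbitrary} (rather than $s\ge\max\{q,r\}$) is obtained by the customary normalization of subtracting from $f$ a fixed low-order piece, e.g.\ its average over a ball $B\subset D$, which disappears under $\mathrm{D}^m$ and has norm bounded by any $L^s$ norm. Collecting the terms yields the claimed inequality with $C_1$ and $C_2$ depending only on $D$ and the indices.

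The main obstacle is the homogeneous whole-space inequality of the first step, with its exceptional-index bookkeeping; the passage to $D$ and the absorption of intermediate derivatives are comparatively routine once a total extension operator and the interpolation inequality for intermediate derivatives are available. As the statement is entirely classical, in the paper itself one would simply cite \cite{NIR} and the standard extension-operator literature rather than reproduce these steps.
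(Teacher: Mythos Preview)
Your sketch is a correct outline of the classical proof, and you correctly anticipated the paper's treatment: the paper does not prove this lemma at all but simply states it as well-known and refers to \cite{NIR}. There is nothing further to compare.
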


\vskip .1in
Especially, the following special cases will be used.

\begin{corollary}\label{C1}
Suppose $D \subset\R^2$ be a bounded domain with smooth boundary. Then

(1)\,\,\,$\| f \|_{L^{4}(D)} \leq C\, (\|  f \|_{L^{2}(D)}^{\f12} \| \nabla f \|_{L^{2}(D)}^{\f12} + \| f \|_{L^{2}(D)}),\,\,\,\forall f\in H^1(D);$\vskip 0.2cm

(2)\,\,\,$\| \nabla f \|_{L^{4}(D)} \leq C\, (\|  f \|_{L^{2}(D)}^{\f14} \| \nabla^2 f \|_{L^{2}(D)}^{\f34} + \| f \|_{L^{2}(D)}),\,\,\,\forall f\in H^2(D);$\vskip 0.2cm

(3)\,\,\,$\| f \|_{L^{\infty}(D)} \leq C\, (\|  f \|_{L^{2}(D)}^{\f12} \| \nabla^2 f \|_{L^{2}(D)}^{\f12} + \| f \|_{L^{2}(D)}),\,\,\,\forall f\in H^2(D);$\vskip 0.2cm

(4)\,\,\,$\| f \|_{L^{\infty}(D)} \leq C\, (\|  f \|_{L^{2}(D)}^{\f23} \| \nabla^3 f \|_{L^{2}(D)}^{\f13} + \| f \|_{L^{2}(D)}),\,\,\,\forall f\in H^3(D).$
\end{corollary}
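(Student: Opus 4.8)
The plan is to obtain all four inequalities as direct specializations of the Gagliardo--Nirenberg inequality of Lemma \ref{P1}, taking $n=2$ and, in every case, $r=q=s=2$. With these choices the conclusion of Lemma \ref{P1} reads $\|\mathrm D^j f\|_{L^p(D)}\le C_1\|\mathrm D^m f\|_{L^2(D)}^{\alpha}\|f\|_{L^2(D)}^{1-\alpha}+C_2\|f\|_{L^2(D)}$, so for each part it remains only to select the triple $(j,m,p)$, solve the dimensional balance $\tfrac1p-\tfrac jn=\alpha\bigl(\tfrac1r-\tfrac mn\bigr)+(1-\alpha)\tfrac1q$ for $\alpha$, and verify the admissibility constraint $\tfrac jm\le\alpha\le1$ that makes the lemma applicable. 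Once $\alpha$ is identified, substituting it into the lemma's conclusion yields exactly the asserted estimate, with the displayed constant $C$ absorbing $\max\{C_1,C_2\}$.

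Concretely, for (1) take $j=0$, $m=1$, $p=4$: the balance relation $\tfrac14=(1-\alpha)\tfrac12$ gives $\alpha=\tfrac12$, and $0\le\tfrac12\le1$. For (2) take $j=1$, $m=2$, $p=4$: then $\tfrac14-\tfrac12=\alpha\bigl(\tfrac12-1\bigr)+(1-\alpha)\tfrac12$, i.e. $-\tfrac14=\tfrac12-\alpha$, so $\alpha=\tfrac34$, and $\tfrac jm=\tfrac12\le\tfrac34\le1$. For (3) take $j=0$, $m=2$, $p=\infty$: then $0=\alpha\bigl(\tfrac12-1\bigr)+(1-\alpha)\tfrac12=\tfrac12-\alpha$, so $\alpha=\tfrac12$. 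For (4) take $j=0$, $m=3$, $p=\infty$: then $0=\alpha\bigl(\tfrac12-\tfrac32\bigr)+(1-\alpha)\tfrac12=\tfrac12-\tfrac{3\alpha}2$, so $\alpha=\tfrac13$. In each case $\alpha\in(0,1)$ and the admissibility inequality holds, so Lemma \ref{P1} applies and delivers the stated bound.

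There is no serious obstacle here: the argument is purely a matter of bookkeeping, and the only points needing care are solving the scaling identity correctly and confirming $\tfrac jm\le\alpha\le1$ for the chosen exponents. One structural remark worth recording is that, in contrast with the whole-space case $\R^2$, the additive term $C\|f\|_{L^2(D)}$ genuinely cannot be omitted on a bounded domain (it is already required by constant functions), which is why all four estimates carry this term; since $\alpha<1$ throughout, one could alternatively restate (1)--(4) after a Young's inequality step, but this reformulation is not needed for the applications in the subsequent sections.
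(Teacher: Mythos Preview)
Your proof is correct and is exactly the intended argument: the paper presents Corollary~\ref{C1} immediately after Lemma~\ref{P1} as a list of special cases, and your verification of the parameters $(j,m,p,\alpha)$ in each of the four items is precisely what is needed. There is nothing to add.
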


\vskip .1in
The lemma below provides estimates for products and commutators (see, e.g., \cite{MB}).

\begin{lemma}\label{commutator-est}
Let $D\subset \R^2$ be a bounded domain with smooth boundary.
Then for any multi-indices $\alpha$ and $\beta$,
\begin{equation}\label{c-0}
 \|D^\alpha(fg)\|_{L^2(D)}\leq C(\|f\|_{L^\infty(D)}\|g\|_{H^{|\alpha|}(D)}+\|f\|_{H^{|\alpha|}(D)}\|g\|_{L^\infty(D)})
\end{equation} for some constant $C$ depending on $D$ and $\alpha,$ and
\begin{equation}\label{c-1}\|D^\beta(fg)-fD^\beta g\|_{L^2(D)}\leq C(\|\nabla f\|_{L^\infty(D)}\|g\|_{H^{|\beta|-1}(D)}+\|f\|_{H^{|\beta|}(D)}\|g\|_{L^\infty(D)})
\end{equation}for some constant $C$ depending on $D$ and $\beta.$
\end{lemma}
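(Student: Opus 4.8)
The plan is to deduce both inequalities from their classical whole-plane analogues, which are recorded in \cite{MB}, by transplanting $f$ and $g$ to $\R^2$ via a bounded extension operator. Since $\partial D$ is smooth, there is a single linear extension operator $E$ (e.g.\ Stein's total extension operator) that maps $W^{k,p}(D)$ boundedly into $W^{k,p}(\R^2)$ for all integers $k\ge 0$ and all $1\le p\le\infty$ simultaneously, with operator norms depending only on $D$. Writing $\widetilde f=Ef$ and $\widetilde g=Eg$, one has $\widetilde f=f$ and $\widetilde g=g$ on the open set $D$, so all their distributional derivatives coincide there; hence $D^\alpha(fg)=D^\alpha(\widetilde f\widetilde g)$ a.e.\ on $D$ and therefore
$$
\|D^\alpha(fg)\|_{L^2(D)}\le\|D^\alpha(\widetilde f\widetilde g)\|_{L^2(\R^2)}.
$$
The Moser-type product estimate on $\R^2$ gives $\|D^\alpha(\widetilde f\widetilde g)\|_{L^2(\R^2)}\le C(\|\widetilde f\|_{L^\infty}\|\widetilde g\|_{H^{|\alpha|}}+\|\widetilde f\|_{H^{|\alpha|}}\|\widetilde g\|_{L^\infty})$, and inserting the extension bounds $\|\widetilde f\|_{L^\infty(\R^2)}\le C\|f\|_{L^\infty(D)}$, $\|\widetilde f\|_{H^{|\alpha|}(\R^2)}\le C\|f\|_{H^{|\alpha|}(D)}$ (and likewise for $g$) yields \eqref{c-0}.

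For the commutator estimate \eqref{c-1} the same scheme applies, but one first observes that $D^\beta(fg)-fD^\beta g$ is unchanged if $f$ is replaced by $f-c$ for a constant $c$. Choosing $c=\frac{1}{|D|}\int_D f$ and using the Poincar\'e--Wirtinger inequality on the connected bounded domain $D$ (which gives $\|f-c\|_{L^\infty(D)}\le C\|\nabla f\|_{L^\infty(D)}$), one may assume from the outset that $\|f\|_{L^\infty(D)}\le C\|\nabla f\|_{L^\infty(D)}$, so that the extension also obeys $\|\nabla\widetilde f\|_{L^\infty(\R^2)}\le C\|\nabla f\|_{L^\infty(D)}$. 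Restricting to $D$ as above and invoking the Kato--Ponce commutator estimate on $\R^2$, namely $\|D^\beta(\widetilde f\widetilde g)-\widetilde f D^\beta\widetilde g\|_{L^2(\R^2)}\le C(\|\nabla\widetilde f\|_{L^\infty}\|\widetilde g\|_{H^{|\beta|-1}}+\|\widetilde f\|_{H^{|\beta|}}\|\widetilde g\|_{L^\infty})$, one obtains \eqref{c-1}.

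An alternative, fully self-contained route avoids the extension and works directly on $D$. By the Leibniz rule $D^\alpha(fg)=\sum_{\gamma\le\alpha}\binom{\alpha}{\gamma}D^\gamma f\,D^{\alpha-\gamma}g$; for $\gamma$ with $0<|\gamma|<|\alpha|$ one estimates $\|D^\gamma f\,D^{\alpha-\gamma}g\|_{L^2(D)}$ by H\"older's inequality with the conjugate exponents $\tfrac{2|\alpha|}{|\gamma|}$ and $\tfrac{2|\alpha|}{|\alpha|-|\gamma|}$, and then interpolates each factor between $L^\infty(D)$ and $H^{|\alpha|}(D)$ via the Gagliardo--Nirenberg inequality of Lemma \ref{P1} (taking $n=2$, $r=2$, $q=\infty$, so the exponent constraint there is met with $\alpha=|\gamma|/|\alpha|$). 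The lower-order terms produced by Lemma \ref{P1} are harmless since $D$ is bounded, and Young's inequality reorganizes the products into the right-hand side of \eqref{c-0}; the endpoints $\gamma=0$ and $\gamma=\alpha$ are immediate. The commutator estimate \eqref{c-1} follows the same pattern after writing $D^\beta(fg)-fD^\beta g=\sum_{0<\gamma\le\beta}\binom{\beta}{\gamma}D^\gamma f\,D^{\beta-\gamma}g$, bounding the $|\gamma|=1$ terms directly by $\|\nabla f\|_{L^\infty(D)}\|g\|_{H^{|\beta|-1}(D)}$ and, for $|\gamma|\ge2$, interpolating $D^\gamma f=D^{\gamma'}(\nabla f)$ and $D^{\beta-\gamma}g$ between their $L^\infty$ norms and their $H^{|\beta|-1}$ norms.

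The only delicate points are bookkeeping rather than ideas: in the extension approach, securing an extension operator bounded simultaneously on all the relevant $W^{k,p}$ (including $p=\infty$) and keeping the commutator estimate homogeneous in $\nabla f$, which the mean-subtraction above takes care of; in the direct approach, verifying that the interpolation exponents satisfy the constraint in Lemma \ref{P1} and that every cross term and lower-order term is absorbed into $\|f\|_{L^\infty}\|g\|_{H^{|\alpha|}}+\|f\|_{H^{|\alpha|}}\|g\|_{L^\infty}$ (and its commutator analogue). Neither is a real obstacle, so I would present whichever is shorter --- most likely the reduction to the whole-space estimates of \cite{MB}.
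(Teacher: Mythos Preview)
The paper does not actually prove this lemma: it is stated in the preliminaries section with the remark ``see, e.g., \cite{MB}'' and no argument is given. Your proposal therefore supplies more than the paper does. Your first route --- extending $f,g$ to $\R^2$ by Stein's operator and invoking the whole-space Moser and Kato--Ponce estimates recorded in \cite{MB} --- is exactly in the spirit of the paper's citation, and the mean-subtraction device you use to make the extension bound $\|\nabla\widetilde f\|_{L^\infty(\R^2)}\le C\|\nabla f\|_{L^\infty(D)}$ legitimate is correct (it does require $D$ connected, which is implicit in ``domain''). Your second, self-contained route via Leibniz and the Gagliardo--Nirenberg inequality of Lemma~\ref{P1} is the textbook proof of the $\R^n$ version transplanted to $D$; the interpolation exponents you quote do satisfy the constraint in Lemma~\ref{P1} (with $\theta=|\gamma|/|\alpha|$, hitting the endpoint $j/m$), so this works too. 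Either argument is acceptable; the first is shorter and matches the paper's intent of deferring to \cite{MB}.
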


\vskip .1in
We will need the Poincar\'{e} type inequality (see, e.g.,
\cite{evans}).

\begin{lemma}\label{poincare}
Let $D\subset\R^2$ be a bounded domain with smooth boundary and $u\in H_0^{1}(D)$. Then there exists a constant $C$ depending on $D$ only such that
$$\|u\|_{L^{2}(D)}\leq C\,\|\nabla u\|_{L^{2}(D)}.$$
\end{lemma}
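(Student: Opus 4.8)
The plan is to establish the inequality first for smooth, compactly supported functions by an elementary one-dimensional integration that produces an explicit constant, and then to pass to the full space $H_0^1(D)$ by density. The only structural feature of $D$ that I would actually use is its boundedness; the smoothness of $\p D$, while assumed in the statement, plays no role in this particular estimate.

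First I would exploit boundedness of $D$ to pick real numbers $a<b$ with $D\subset\{x=(x_1,x_2):a<x_1<b\}$. Given $u\in C_c^\infty(D)$, I extend $u$ by zero outside $D$ to a function in $C_c^\infty(\R^2)$, choosing $a$ strictly below the first coordinate of the support so that $u(a,x_2)=0$ for every $x_2$. Then for each fixed $x_2$ and each $x_1\in(a,b)$ the fundamental theorem of calculus gives
$$u(x_1,x_2)=\int_a^{x_1}\p_1 u(s,x_2)\dd s.$$
Applying the Cauchy--Schwarz inequality in the variable $s$ yields
$$\abs{u(x_1,x_2)}^2\le (x_1-a)\int_a^{x_1}\abs{\p_1 u(s,x_2)}^2\dd s\le (b-a)\int_a^b\abs{\p_1 u(s,x_2)}^2\dd s.$$
The right-hand side no longer depends on $x_1$, so integrating over $x_1\in(a,b)$ contributes a factor $(b-a)$, and a further integration over $x_2$ (together with Fubini, all integrands having compact support) gives
$$\|u\|_{L^2(D)}^2\le (b-a)^2\,\|\p_1 u\|_{L^2(D)}^2\le (b-a)^2\,\|\nabla u\|_{L^2(D)}^2.$$
Thus the desired inequality holds for every $u\in C_c^\infty(D)$ with the explicit constant $C=b-a$, which may be taken to be $\mathrm{diam}(D)$ and depends on $D$ only.

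Finally, since $H_0^1(D)$ is by definition the closure of $C_c^\infty(D)$ in the $H^1$-norm, any $u\in H_0^1(D)$ is the $H^1$-limit of a sequence $u_n\in C_c^\infty(D)$. Both sides of $\|u_n\|_{L^2(D)}\le C\,\|\nabla u_n\|_{L^2(D)}$ are continuous with respect to the $H^1$-norm, so letting $n\to\infty$ transfers the inequality to $u$ and completes the proof.

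Honestly, this is a classical result and I do not anticipate a genuine obstacle; the single point requiring care is verifying the boundary value $u(a,x_2)=0$ after the zero extension, which is exactly where compact support inside $D$ is used. If one preferred not to track the constant, an alternative non-constructive route is a contradiction argument: assuming no such $C$ exists produces a sequence $u_n\in H_0^1(D)$ with $\|u_n\|_{L^2(D)}=1$ and $\|\nabla u_n\|_{L^2(D)}\to0$; by Rellich--Kondrachov a subsequence converges in $L^2$ to a limit $u$ with $\|u\|_{L^2(D)}=1$, yet $\nabla u=0$ forces $u$ constant and hence $u=0$ in $H_0^1(D)$, a contradiction. I would favor the direct argument precisely because it delivers the effective constant $C=\mathrm{diam}(D)$.
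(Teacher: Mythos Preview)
Your argument is correct: the fundamental-theorem-of-calculus estimate on a slab containing $D$, followed by density of $C_c^\infty(D)$ in $H_0^1(D)$, is exactly the standard proof and yields the explicit constant $C=b-a\le\mathrm{diam}(D)$. The alternative compactness argument you sketch is also valid.

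As for comparison with the paper: there is nothing to compare. The paper does not prove this lemma at all; it simply states it as a well-known fact and refers the reader to Evans's textbook. So you have supplied strictly more than the paper does. Incidentally, the direct argument you wrote is essentially the one appearing in Evans, so in that sense your proof is ``the same'' as the one implicitly invoked.
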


\vskip .1in
It is well-known that the standard singular integral operators are bounded on $L^q(\mathbb{R}^n)$ for any $q\in(1,\infty)$ (see, e.g., \cite{St}).
The lemma below provides the bounded domain version of this fact (see,
e.g., \cite{Bour,FT,St}). More precisely, this lemma allows us to control
the gradient of a vector field in $L^q$ in terms of its curl.

\begin{lemma}\label{curl}
Let $D\subset\R^2$ be a bounded domain with smooth boundary and let $m$ be a positive integer. If a vector field $u\in L^q(D)$ with $q\in(1,\infty)$ satisfies
$$
\nabla\times u\in W^{m-1,q}(D), \qquad \nabla\cdot u=0, \qquad u\cdot{\bm n}|_{\p D}=0,
$$
then there is a constant $K=K(D, m)>0$ (independent of $q$) such that
$$
\|u\|_{W^{m,q}(D)}\leq K\,q\,(\|\nabla\times u\|_{W^{m-1,q}(D)}+\|u\|_{L^{q}(D)}).
$$
\end{lemma}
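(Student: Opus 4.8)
\medskip

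\emph{Proof sketch.} The plan is to reduce the div--curl estimate to elliptic regularity for the Dirichlet Laplacian, and then to read off the linear dependence on $q$ from the $L^q$ bounds for Calder\'on--Zygmund operators.

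First I would introduce a stream function. If $D$ is simply connected, then $\nabla\cdot u=0$ makes the $1$-form $-u_2\,\mathrm{d}x_1+u_1\,\mathrm{d}x_2$ closed, hence exact, so $u=\nabla^\perp\psi$ for a scalar $\psi$; the condition $u\cdot{\bm n}|_{\p D}=0$ forces $\psi$ to be constant on $\p D$, and we normalize $\psi|_{\p D}=0$. Then $\psi$ solves $\Delta\psi=-\nabla\times u$ in $D$ with $\psi|_{\p D}=0$. Since each component of $D^\alpha u$ with $|\alpha|\le m$ involves at most $m+1$ derivatives of $\psi$, we have $\|u\|_{W^{m,q}(D)}\le C\|\psi\|_{W^{m+1,q}(D)}$, while Lemma~\ref{poincare} gives $\|\psi\|_{L^q(D)}\le C\|\nabla\psi\|_{L^q(D)}=C\|u\|_{L^q(D)}$. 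For a finitely connected $D$ one writes $u=\nabla^\perp\psi+h$ with $h$ in the finite-dimensional space of divergence-free fields harmonic and tangent to $\p D$; these are smooth on $\overline D$ and satisfy $\|h\|_{W^{m,q}(D)}\le C\|h\|_{L^q(D)}\le C\big(\|u\|_{L^q(D)}+\|\nabla\psi\|_{L^q(D)}\big)$ with $q$-independent constants, so they are harmless.

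It remains to establish, for the Dirichlet problem on $D$, the estimate
$$
\|\psi\|_{W^{m+1,q}(D)}\le K(D,m)\,q\,\big(\|\Delta\psi\|_{W^{m-1,q}(D)}+\|\psi\|_{L^q(D)}\big),\qquad q\in[2,\infty),
$$
which is classical Agmon--Douglis--Nirenberg regularity except for the explicit growth in $q$. For $m=1$ I would localize: cover $\overline D$ by finitely many balls with a subordinate partition of unity; on interior balls the estimate reduces to applying $\partial_i\partial_j\Delta^{-1}$ on $\R^2$, while on boundary balls, after straightening $\p D$ by a smooth diffeomorphism, freezing the resulting coefficients at the center, and reflecting across the flattened boundary, it reduces to singular integrals on $\R^2$. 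Since the $L^q(\R^n)$ operator norm of a Calder\'on--Zygmund operator is $\le Cq$ for $q\in[2,\infty)$ — the range relevant throughout this paper — each model term obeys the claimed bound. All errors from the cutoffs, from the frozen-coefficient differences, and from the diffeomorphisms are of strictly lower order, hence bounded by $\|\psi\|_{W^{1,q}(D)}$ with $q$-independent constants; using $\|\psi\|_{W^{1,q}(D)}\le C\big(\|\psi\|_{W^{2,q}(D)}^{1/2}\|\psi\|_{L^q(D)}^{1/2}+\|\psi\|_{L^q(D)}\big)$ and Young's inequality, one absorbs the $\|\psi\|_{W^{2,q}}$ factor into the left side and is left with the stated bound for $m=1$. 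For $m\ge2$ one applies the case $m=1$ to $D^{m-1}\psi$ and iterates, so the constant stays $O(q)$ with an $m$-dependent prefactor. Combining with the first step and using $\|\Delta\psi\|_{W^{m-1,q}(D)}=\|\nabla\times u\|_{W^{m-1,q}(D)}$ together with $\|\psi\|_{L^q(D)}\le C\|u\|_{L^q(D)}$ yields the lemma.

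The step I expect to be the genuine obstacle is the last one: one must verify that passing from the explicit model operators on $\R^n$ and on the half-space to the actual domain $D$ does not spoil the linear-in-$q$ constant. This works because every localization or straightening error is one derivative smoother than the principal term and so can be dumped into the $\|u\|_{L^q}$ term with a constant independent of $q$; only the model singular integrals carry $q$, and they carry it linearly.
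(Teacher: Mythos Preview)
The paper does not prove this lemma: it is listed in the Preliminaries section as a known tool, with the sentence ``The lemma below provides the bounded domain version of this fact (see, e.g., \cite{Bour,FT,St})'' and no argument given. So there is no ``paper's own proof'' to compare against; your sketch goes well beyond what the authors supply.

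Your approach --- reduce to the Dirichlet problem for a stream function and then track the $q$-dependence through the Calder\'on--Zygmund bound --- is the standard way to obtain such an estimate with the explicit linear growth in $q$, and it is essentially what lies behind the cited references. One point deserves more care: in the step ``for $m\ge2$ apply the case $m=1$ to $D^{m-1}\psi$ and iterate,'' the functions $D^{m-1}\psi$ do not satisfy the Dirichlet condition, so you cannot literally feed them back into the $m=1$ estimate. The usual fix is to take \emph{tangential} derivatives near the boundary (these preserve $\psi|_{\partial D}=0$ after localization and flattening), apply the $m=1$ bound once to gain two derivatives, and recover the remaining normal derivatives algebraically from the equation $\partial_n^2\psi=\Delta\psi-\partial_\tau^2\psi$; this last step involves no singular integral and hence no extra factor of $q$. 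Done this way the constant is indeed $K(D,m)\,q$ rather than $q^m$. Apart from this, your outline is sound.
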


\vskip .1in
The lemma next presents a logarithmic interpolation inequality for vector
fields  defined on bounded domains. It is taken from \cite{KS}.

\begin{lemma}\label{log-inequ}
Let $D\subset\R^2$ be a bounded domain with smooth boundary. Then, for any $0<\alpha<1$,
$$
\|\nabla u\|_{L^{\infty}(D)}\leq C\|\nabla\times u\|_{L^\infty(D)}(1+{\rm log}\,(e+ \|\nabla\times u\|_{C^{\alpha}(D)})).
$$
\end{lemma}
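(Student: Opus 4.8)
The plan is to produce the logarithm by interpolating a \emph{linear-in-}$q$ bound for $\|\nabla u\|_{L^q(D)}$ against the Hölder seminorm of $\nabla u$, and then optimizing over the integrability exponent $q$. I work under the natural structural hypotheses $\nabla\cdot u=0$ and $u\cdot{\bm n}|_{\p D}=0$ that render $u$ recoverable from its curl (these are in force for the velocity field to which the lemma is applied), and abbreviate $M:=\|\nabla\times u\|_{L^\infty(D)}$ and $N:=\|\nabla\times u\|_{C^{\alpha}(D)}$, so that always $M\le N$. The crux is that Lemma \ref{curl} carries a constant $Kq$ with $K$ \emph{independent of} $q$, which is exactly the $q$-growth needed to drive the optimization.

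First I would record a uniform $L^q$ bound for $u$ itself. Applying Lemma \ref{curl} with $m=1$ at the fixed exponent $q=4$, together with the div--curl/Poincar\'e estimate controlling $\|u\|_{H^1(D)}$ by $\|\nabla\times u\|_{L^2(D)}$ and the trivial bounds $\|\nabla\times u\|_{L^p(D)}\le|D|^{1/p}M$, gives $\|u\|_{L^\infty(D)}\le C\|u\|_{W^{1,4}(D)}\le CM$ (using $W^{1,4}(D)\hookrightarrow L^\infty(D)$ in two dimensions); hence $\|u\|_{L^q(D)}\le|D|^{1/q}\|u\|_{L^\infty(D)}\le CM$ with $C$ independent of $q$. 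Feeding this back into Lemma \ref{curl} at a general $q\in(1,\infty)$ yields the key estimate
\[
\|\nabla u\|_{L^q(D)}\le\|u\|_{W^{1,q}(D)}\le Kq\big(\|\nabla\times u\|_{L^q(D)}+\|u\|_{L^q(D)}\big)\le C\,q\,M,
\]
where the \emph{linear} growth in $q$ is what makes the scheme yield a single power of the logarithm.

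Next I would control $\nabla u$ in $C^{\alpha}$ and set up an elementary interpolation. Since $\nabla\times u\in C^{\alpha}(\bar D)$, elliptic (Schauder) regularity for the div--curl system on the smooth domain $D$ gives $[\nabla u]_{C^{\alpha}(D)}\le C\|\nabla\times u\|_{C^{\alpha}(D)}=CN$ (only a polynomial dependence on $N$ is actually needed, so the sharp Schauder constant is irrelevant). The heart of the matter is then a reverse-Hölder estimate at a near-maximum point: if $g\in C^{\alpha}(\bar D)$ then $|g|\ge\tfrac12\|g\|_{L^\infty}$ on a ball $B_\rho(x_0)$ with $\rho^{\alpha}\sim\|g\|_{L^\infty}/[g]_{C^{\alpha}}$, and the measure-density bound $|B_\rho(x_0)\cap D|\ge c\rho^{2}$ (valid for smooth domains) produces
\[
\|g\|_{L^\infty(D)}^{\,1+\frac{2}{\alpha q}}\le C\,\|g\|_{L^q(D)}\,[g]_{C^{\alpha}(D)}^{\frac{2}{\alpha q}}.
\]
Taking $g=\nabla u$ and inserting the two bounds above gives $\|\nabla u\|_{L^\infty(D)}\lesssim(qM)^{1/(1+\varepsilon)}N^{\varepsilon/(1+\varepsilon)}$ with $\varepsilon=\tfrac{2}{\alpha q}$.

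Finally I would optimize in $q$: choosing $q\sim\tfrac{2}{\alpha}\log\!\big(e+N/M\big)$ makes $\varepsilon\log(N/M)=O(1)$, so the factor $N^{\varepsilon/(1+\varepsilon)}$ reduces to $O(M)$ and only the single power of $q$ survives, giving
\[
\|\nabla u\|_{L^\infty(D)}\le C\,M\big(1+\log(e+N/M)\big)\le C\,\|\nabla\times u\|_{L^\infty(D)}\big(1+\log(e+\|\nabla\times u\|_{C^{\alpha}(D)})\big),
\]
where the last inequality uses $M\le N$. The main obstacle is twofold: securing the \emph{linear}-in-$q$ $L^q$ estimate, which rests entirely on the $q$-uniform constant in Lemma \ref{curl} and on the $q$-independent bound $\|u\|_{L^q}\le CM$ (hence the preliminary fixed-exponent step), and carrying the div--curl elliptic regularity up to $\p D$ so that $\nabla u\in C^{\alpha}(\bar D)$ is genuinely governed by the curl. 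A classical alternative is to represent $\nabla u$ as a Calder\'on--Zygmund integral of $\omega=\nabla\times u$ through the Green's function of $D$ and split the kernel at radius $\rho$ (near field bounded by $\rho^{\alpha}[\omega]_{C^{\alpha}}$, far field by $\log(1/\rho)\,\|\omega\|_{L^\infty}$), then optimize in $\rho$; this generates the logarithm directly but forces one to manage the boundary corrections of the kernel, which is precisely what the Lemma \ref{curl} route avoids.
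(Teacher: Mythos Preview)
The paper does not prove this lemma; it simply quotes it from Kiselev--\v{S}ver\'ak \cite{KS} as a preliminary tool, so there is no argument in the paper to compare against. Your proposal supplies what the paper omits, and the overall strategy---bootstrapping the $q$-linear constant in Lemma~\ref{curl} into $\|\nabla u\|_{L^q(D)}\le CqM$, invoking Schauder regularity for the div--curl system to get $[\nabla u]_{C^\alpha(D)}\le CN$, and then optimizing an elementary $L^q$--$C^\alpha$ interpolation in $q$---is a correct and standard route to this Brezis--Gallou\"et/Beale--Kato--Majda type estimate on bounded domains. The kernel-splitting alternative you sketch at the end is closer in spirit to what \cite{KS} actually does.

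There is one genuine slip, at the very last step. The inequality
\[
M\big(1+\log(e+N/M)\big)\;\le\;C\,M\big(1+\log(e+N)\big)
\]
does \emph{not} follow from $M\le N$: send $M\to 0^+$ with $N>0$ fixed, and the left-hand side behaves like $M\log(1/M)$ while the right-hand side is of order $M$, so their ratio diverges. What your argument actually establishes is the scale-invariant bound
\[
\|\nabla u\|_{L^\infty(D)}\le C\,\|\nabla\times u\|_{L^\infty(D)}\Big(1+\log\big(e+\|\nabla\times u\|_{C^\alpha(D)}/\|\nabla\times u\|_{L^\infty(D)}\big)\Big),
\]
which is the correct formulation (the version displayed in the lemma is not invariant under $u\mapsto\lambda u$ and, read literally, fails as $\lambda\to 0$). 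For the paper's only use of the lemma, in Proposition~\ref{nabla u-est}, the distinction is immaterial: $\|\Omega\|_{L^\infty(D)}$ has already been bounded by Proposition~\ref{omegainfty}, and either form feeds the same Gronwall argument. So keep your penultimate inequality as the conclusion and drop the final step.
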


\vskip .1in
The next two lemmas state the regularization
estimates for elliptic and parabolic equations
defined on bounded domains (see, e.g., \cite{evans,GT,Lady}).

\begin{lemma}\label{elliptic}
Let $D\subset\R^2$ be a bounded domain with smooth boundary. Consider the elliptic boundary-value problem
\begin{equation}\label{eel}
\left\{\begin{array}{ll}
-\Delta f=g\quad&\hbox{in}\,\,D,\\
f=0\quad&\hbox{on}\,\,\p{D}.
\end{array}\right.
\end{equation}
If, for $p\in(1,\infty)$ and an integer $m\geq -1$,  $g\in W^{m,p}(D)$,  then (\ref{eel}) has a unique solution $f$ satisfying
$$
\|f\|_{W^{m+2,p}(D)}\leq C\|g\|_{W^{m,p}(D)},
$$
where $C$ depending only on $D, \,m$ and $p$.
\end{lemma}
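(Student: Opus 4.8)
The plan is to reduce \eqref{eel} to two classical ingredients — the variational solvability in $H^1_0(D)$ and the Calder\'on--Zygmund theory for the Laplacian — and then to bootstrap in the regularity index $m$. All of the estimates are standard and can ultimately be quoted from \cite{evans,GT,Lady}; the purpose of the sketch is to indicate why the stated constant depends only on $D,m,p$.

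\textbf{Step 1: solvability, uniqueness, and the case $p=2$, $m=0$.} For $g\in L^2(D)\subset H^{-1}(D)$ the bilinear form $(f,h)\mapsto\int_D\nabla f\cdot\nabla h$ is coercive on $H^1_0(D)$ by Lemma \ref{poincare}, so Lax--Milgram gives a unique weak solution $f\in H^1_0(D)$ with $\|f\|_{H^1(D)}\le C\|g\|_{H^{-1}(D)}$; uniqueness for the boundary-value problem as stated follows from testing the difference of two solutions against itself. To upgrade to $H^2$ I would run Nirenberg's difference-quotient argument: interior bounds come from testing $-\Delta f=g$ against second-order difference quotients of $f$ supported away from $\partial D$; near the boundary one uses a smooth diffeomorphism to flatten $\partial D$ (possible since $\partial D$ is smooth), takes difference quotients only in directions tangent to the flattened boundary to control the mixed and tangential second derivatives, and then recovers the purely normal second derivative algebraically from the transformed equation. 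A finite cover of $\overline D$ by such charts together with a partition of unity yields $\|f\|_{H^2(D)}\le C(\|g\|_{L^2(D)}+\|f\|_{H^1(D)})\le C\|g\|_{L^2(D)}$.

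\textbf{Step 2: the $L^p$ estimate, $m=0$, and bootstrap in $m$.} For general $p\in(1,\infty)$ I would localize and flatten exactly as above and freeze the principal part, reducing to the constant-coefficient Dirichlet problem on $\R^n$ and on the half-space $\R^n_+$. On $\R^n$ the operator $D^2(-\Delta)^{-1}$ is a matrix of Calder\'on--Zygmund operators, hence bounded on $L^p$ for $1<p<\infty$ (see, e.g., \cite{St}); the half-space case follows by odd reflection across $\{x_n=0\}$, which respects the homogeneous Dirichlet condition. The lower-order terms generated by the change of variables and the commutators with the cutoff functions are absorbed either by choosing the charts small (continuity of the leading coefficients) or by the interpolation inequality $\|f\|_{W^{1,p}}\le\varepsilon\|f\|_{W^{2,p}}+C_\varepsilon\|f\|_{L^p}$, giving $\|f\|_{W^{2,p}(D)}\le C(\|g\|_{L^p(D)}+\|f\|_{L^p(D)})$; the term $\|f\|_{L^p(D)}$ is then controlled by $\|g\|_{L^p(D)}$ via the weak-solution bound (plus Sobolev embedding, or a short compactness argument). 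For $m\ge1$ and $g\in W^{m,p}(D)$ one differentiates the flattened equation: tangential derivatives of $f$ again solve Dirichlet problems with right-hand side in $W^{m-1,p}$, normal derivatives are recovered from the equation, and smoothness of $\partial D$ makes the change-of-variables coefficients as regular as needed, so Step 2 applies inductively. The case $m=-1$ amounts to writing $g=\operatorname{div}G$ with $G\in L^p$ and concluding $f\in W^{1,p}$ with $\|f\|_{W^{1,p}}\le C\|G\|_{L^p}$, which follows either by duality against the $m=0$, $p'$ estimate or directly from the $L^p$-boundedness of $\nabla(-\Delta)^{-1}\operatorname{div}$.

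\textbf{Main obstacle.} The only genuinely delicate point is the analysis near $\partial D$: difference quotients are available only in tangential directions, so the full Hessian (and all higher derivatives) must be reconstructed from the PDE, and one must check that boundary flattening preserves the structure needed for the Calder\'on--Zygmund/reflection argument and that the resulting constants depend only on a fixed finite atlas of $D$ and on $m,p$. Everything else is routine localization and interpolation.
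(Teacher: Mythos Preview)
The paper does not prove this lemma at all; it is stated in the preliminaries section as a classical fact with references to \cite{evans,GT,Lady}, and your sketch is precisely the standard argument found in those sources (Lax--Milgram plus Nirenberg difference quotients for $p=2$, Calder\'on--Zygmund with boundary flattening for general $p$, then bootstrap in $m$). So your proposal is correct and aligned with what the cited references do, which is all the paper is implicitly invoking.
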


\begin{lemma}\label{heat}
Let $D\subset\R^2$ be a bounded domain with smooth boundary. Assume, for $q\in [2,\infty)$,
$$
\varphi\in W_0^{1,q}(D),\qquad g\in L^q(0,T;L^q(D)).
$$
Assume $f\in L^2(0,T;H^1_0(D))$ with $f_t\in L^2(0,T;H^{-1}(D))$ is a weak solution of the parabolic system
\beno
\left\{\begin{array}{ll}
f_t-\Delta f=g,\quad&\hbox{in}\,\,{D},\\
f=0\quad&\hbox{on}\,\,\p{D},\\
f|_{t=0}=\varphi\quad&\hbox{in}\,\,{D}.
\end{array}\right.
\eeno
Then, there exists a constant $C$ depending only on $q,\,D$,  such that
$$\|f_t\|_{L^q(0,T;L^{q}(D))}+\|\Delta f\|_{L^q(0,T;L^{q}(D))}\leq C(\|g\|_{L^q(0,T;L^{q}(D))}+\|\varphi\|_{W_0^{1,q}(D)}).$$
\end{lemma}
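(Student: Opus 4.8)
The plan is to use linearity to split $f=f_1+f_2$, where $f_2$ solves the inhomogeneous problem with zero initial data ($\partial_t f_2-\Delta f_2=g$, $f_2|_{\p D}=0$, $f_2|_{t=0}=0$) and $f_1$ solves the homogeneous problem carrying the prescribed datum ($\partial_t f_1-\Delta f_1=0$, $f_1|_{\p D}=0$, $f_1|_{t=0}=\varphi$). Writing $A=-\Delta$ for the Dirichlet Laplacian on $L^q(D)$, with $D(A)=W^{2,q}(D)\cap W^{1,q}_0(D)$, it suffices to prove the two estimates
\[
\|\partial_t f_2\|_{L^q(0,T;L^q(D))}+\|\Delta f_2\|_{L^q(0,T;L^q(D))}\le C\,\|g\|_{L^q(0,T;L^q(D))}
\]
and
\[
\|\partial_t f_1\|_{L^q(0,T;L^q(D))}+\|\Delta f_1\|_{L^q(0,T;L^q(D))}\le C\,\|\varphi\|_{W^{1,q}_0(D)},
\]
after which the assertion follows by the triangle inequality. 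Uniqueness in the stated weak class is standard (an $H^{-1}$ energy argument), so these bounds may be attributed to the given $f$.

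The forcing estimate for $f_2$ is the core of the lemma and is precisely \emph{maximal $L^q$-regularity} for the Dirichlet Laplacian. I would represent the solution by Duhamel's formula $f_2(t)=\int_0^t e^{-(t-s)A}g(s)\,\dd s$ and invoke the operator-theoretic route: on a bounded smooth domain $A$ is a nonnegative self-adjoint operator on $L^2(D)$ that generates a bounded analytic semigroup on every $L^q(D)$ with $1<q<\infty$, and by Seeley's theory of complex powers of elliptic boundary-value problems its imaginary powers $A^{is}$ are bounded. Since $L^q(D)$ is a UMD space, the Dore--Venni theorem (equivalently, the $R$-sectoriality of $A$ combined with Weis's operator-valued Mikhlin multiplier theorem) yields $\|A f_2\|_{L^q(0,T;L^q(D))}\le C\|g\|_{L^q(0,T;L^q(D))}$ with $C$ independent of $T$; since $\partial_t f_2=g-A f_2$ and $\Delta f_2=-A f_2$, this controls both terms. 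A fully classical alternative, matching the cited references, is to construct the parabolic Green function on $D$ by interior/boundary localization and reflection, and to recognize $\partial_t f_2$ and $\Delta f_2$ as parabolic singular integrals of $g$ whose $L^q$-boundedness is Calder\'on--Zygmund--Solonnikov theory.

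For the homogeneous part, $f_1(t)=e^{-tA}\varphi$, the target reduces to $\int_0^T\|A e^{-tA}\varphi\|_{L^q(D)}^q\,\dd t\le C\|\varphi\|_{W^{1,q}_0(D)}^q$, and \textbf{this is the step I expect to be the main obstacle.} The crude analyticity bound $\|A e^{-tA}\varphi\|_{L^q}\le C t^{-1}\|\varphi\|_{L^q}$ is hopelessly non-integrable near $t=0$, so the estimate cannot be obtained by power counting and must use the precise regularity of $\varphi$. The correct framework is the trace characterization of the maximal-regularity class: the set of initial data for which $\partial_t(e^{-tA}\varphi)$ and $A e^{-tA}\varphi$ lie in $L^q(0,T;L^q(D))$ is exactly the real interpolation space $(L^q(D),D(A))_{1-1/q,\,q}$. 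One then identifies this space using the square-root property $D(A^{1/2})=W^{1,q}_0(D)$ for the Dirichlet Laplacian on a smooth domain, together with reiteration and the bookkeeping of the Dirichlet boundary condition. The delicate point is exactly the matching of the hypothesis $\varphi\in W^{1,q}_0(D)$ with this trace space as $t\to 0^+$, where both the boundary condition and the exponent $q$ enter: at $q=2$ the trace space is precisely $H^1_0(D)=W^{1,2}_0(D)$ and the estimate is clean, whereas for larger $q$ this identification — and any compatibility the datum must satisfy — is where the real work resides. Once both pieces are secured, summing them yields the stated bound.
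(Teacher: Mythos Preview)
The paper does not prove this lemma at all: it is listed in the preliminaries section as a known regularization estimate, with a blanket citation to Evans, Gilbarg--Trudinger, and Ladyzhenskaya--Solonnikov--Ural'ceva. So there is no ``paper's own proof'' to compare against; your proposal already goes well beyond what the paper does.

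Your outline is the standard route and is correct in substance. The decomposition $f=f_1+f_2$ is the right reduction; the bound on $f_2$ is exactly maximal $L^q$-regularity for the Dirichlet Laplacian, and both mechanisms you name (Dore--Venni/Weis via bounded imaginary powers on UMD spaces, or the parabolic Calder\'on--Zygmund theory of Solonnikov in the LSU monograph the paper cites) deliver it with a constant independent of $T$.

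Your instinct about the homogeneous piece is also right, and in fact it exposes a slight imprecision in the lemma as stated. The sharp initial-data class for $\partial_t f_1,\,\Delta f_1\in L^q(0,T;L^q)$ is the trace space $(L^q(D),D(A))_{1-1/q,\,q}=B^{2-2/q}_{q,q}(D)$ with zero Dirichlet trace. For $q=2$ this coincides with $W^{1,2}_0(D)=H^1_0(D)$, so the statement is exact; but for $q>2$ one has $2-2/q>1$ and $W^{1,q}_0(D)$ is strictly larger than the required Besov space, so the estimate as written cannot hold for every $\varphi\in W^{1,q}_0(D)$. The identity $D(A^{1/2})=W^{1,q}_0(D)$ that you invoke is correct, but it only gives the pointwise bound $\|A e^{-tA}\varphi\|_{L^q}\lesssim t^{-1/2}\|\varphi\|_{W^{1,q}_0}$, whose $q$th power is not integrable near $t=0$ once $q\ge 2$; the gap is genuine, not an artifact of the method. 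In the paper's actual use of the lemma (the proof of Proposition~\ref{w-H3-est}) the initial datum is $w_0\in H^2(D)$, and in two dimensions $H^2(D)\hookrightarrow B^{2-2/q}_{q,q}(D)$ for every $q<\infty$, so the application goes through; the authors even write $\|w_0\|_{H^2}$ rather than $\|w_0\|_{W^{1,q}_0}$ when they apply it. So your concern is well placed, but it does not affect the paper's results.
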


\vskip .1in
We also recall Kato's well-posedness result on the
2D Euler equation (see \cite{Kato}).

\begin{lemma}\label{Euler}
Let $D\subset\R^2$ be a bounded domain with smooth boundary.
Consider the initial- and boundary-value problem
\beno
\left\{\begin{array}{ll}
u_t-u\cdot\nabla u+\nabla p=f,\\
\nabla\cdot u=0,\\
u|_{t=0}=u_0(x),~~u\cdot \mathbf{n}|_{\partial D}=0.
\end{array}\right.
\eeno
Assume $u_{0}\in C^{1+\gamma}(\overline{D})$ with $0<\gamma<1$ satisfying $\nabla\cdot u_{0}(x)=0$ and $u_{0}(x)\cdot \mathbf{n}|_{\partial D}=0.$ Let $T>0$ and $f\in C([0,T];C^{1+\gamma}(\overline{D}))$. Then there exists a unique solution $(u,p)$ such that $(u,p)\in C^{1+\gamma}(\overline{D}\times[0,T])$.
\end{lemma}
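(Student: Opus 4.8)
\emph{Proof proposal.} The plan is to pass to the vorticity formulation and solve a linear transport equation by a Picard iteration, closing the loop with an elliptic (div-curl) reconstruction of the velocity. Setting $\Omega=\nabla\times u=\p_1u_2-\p_2u_1$ and taking the curl of the momentum equation, the divergence-free condition yields $\nabla\times(u\cdot\nabla u)=u\cdot\nabla\Omega$, so that $\Omega$ obeys the scalar transport equation
\begin{equation}\label{eulervort}
\partial_t\Omega-u\cdot\nabla\Omega=g,\qquad \Omega|_{t=0}=\Omega_0:=\nabla\times u_0,
\end{equation}
with forcing $g:=\nabla\times f\in C([0,T];C^{\gamma}(\overline{D}))$. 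Conversely, given $\Omega$, the velocity is recovered by solving $\nabla\times u=\Omega$, $\nabla\cdot u=0$, $u\cdot\bm n|_{\p D}=0$: I introduce the stream function $\psi$ with $u=\nabla^{\perp}\psi$, so that $\psi$ solves a Poisson problem $\Delta\psi=\pm\Omega$ with $\psi$ constant on each boundary component (the constants fixed by the prescribed circulations, which here all vanish). Classical Schauder estimates up to the boundary (see, e.g., \cite{GT}) then give $\psi\in C^{2+\gamma}(\overline{D})$ and hence a bounded reconstruction operator $K:\Omega\mapsto u$ from $C^{\gamma}(\overline{D})$ to $C^{1+\gamma}(\overline{D})$; Lemma \ref{curl} supplies the companion $L^q$ version.

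First I would set up the iteration. Starting from $u^{(0)}\equiv u_0$, given $u^{(n)}\in C([0,T];C^{1+\gamma}(\overline{D}))$ that is divergence-free and tangent to $\p D$, I solve the \emph{linear} transport equation \eqref{eulervort}, with $u$ replaced by $u^{(n)}$, along the characteristics of the field $-u^{(n)}$. Since $u^{(n)}$ is $C^1$ in $x$, the flow $X^{(n)}(t,\cdot)$ is a diffeomorphism and the solution is explicit, $\Omega^{(n+1)}(t,X^{(n)}(t,x))=\Omega_0(x)+\int_0^t g(s,X^{(n)}(s,x))\,\mathrm ds$; I then set $u^{(n+1)}=K[\Omega^{(n+1)}]$. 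Because transport by a divergence-free field preserves every $L^p$ norm, one immediately obtains the bound $\|\Omega^{(n+1)}(t)\|_{L^\infty}\le \|\Omega_0\|_{L^\infty}+\int_0^t\|g(s)\|_{L^\infty}\,\mathrm ds$, finite on all of $[0,T]$.

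The heart of the argument, and the step I expect to be the main obstacle, is the \emph{global-in-time} Hölder bound on $\Omega^{(n)}$, uniform in $n$. Differentiating the flow and applying Gr\"onwall, the $C^{\gamma}$ seminorm of $X^{(n)}(t,\cdot)^{\pm1}$ is controlled by $\exp\big(\gamma\int_0^t\|\nabla u^{(n)}(s)\|_{L^\infty}\,\mathrm ds\big)$, so from the representation formula $\|\Omega^{(n+1)}(t)\|_{C^{\gamma}}$ is bounded in terms of the same exponential. The logarithmic inequality of Lemma \ref{log-inequ}, together with the div-curl estimate, gives $\|\nabla u^{(n)}\|_{L^\infty}\le C\|\Omega^{(n)}\|_{L^\infty}\big(1+\log(e+\|\Omega^{(n)}\|_{C^{\gamma}})\big)$; feeding in the already-established uniform $L^\infty$ bound on $\Omega^{(n)}$ reduces the a priori estimate to a \emph{linear} differential inequality for $\log(e+\|\Omega^{(n)}\|_{C^{\gamma}})$. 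This closes by Gr\"onwall and yields a bound on $[0,T]$ (double-exponential in $t$, but finite for every $T$), uniform in $n$. This is precisely where the two-dimensional structure is essential: the conservation of $\|\Omega\|_{L^\infty}$ is what rules out finite-time blow-up.

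With these uniform high-regularity bounds in hand, the remaining steps are routine. I would show that $\{u^{(n)}\}$ is Cauchy in the weaker norm $C([0,T];L^2(D))$ (or in $C([0,T];C^{\gamma'})$ with $\gamma'<\gamma$) by estimating the difference of two consecutive transport problems and the stability of the flow maps under perturbation of the velocity, invoking the Poincar\'e inequality of Lemma \ref{poincare} where convenient; interpolation against the uniform $C^{\gamma}$ bound then upgrades the convergence to the full regularity, and the limit $(\Omega,u)$ solves \eqref{eulervort}. Uniqueness proceeds the same way: for two solutions, the difference of vorticities satisfies a transport equation whose right-hand side is controlled by the difference of velocities, and a Gr\"onwall estimate on $\|u_1-u_2\|_{L^2}$ forces them to coincide. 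Finally the pressure is recovered from the Neumann problem $\Delta p=\nabla\cdot f-\nabla\cdot(u\cdot\nabla u)$ with $\nabla p\cdot\bm n|_{\p D}$ read off from the momentum equation, and elliptic Schauder estimates give $p\in C([0,T];C^{1+\gamma})$; reading $\partial_t u=u\cdot\nabla u-\nabla p+f$ shows $\partial_t u\in C([0,T];C^{\gamma})$, which combined with the spatial $C^{1+\gamma}$ bound yields the joint space-time regularity $(u,p)\in C^{1+\gamma}(\overline{D}\times[0,T])$.
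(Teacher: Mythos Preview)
The paper does not prove this lemma at all; it is simply stated in the preliminaries as a known result and attributed to Kato \cite{Kato}. Your proposal is a correct and essentially complete outline of a standard modern proof via the vorticity formulation, div--curl (Biot--Savart) reconstruction through the stream function with Schauder estimates, and closure of the H\"older bound using the logarithmic inequality of Lemma \ref{log-inequ}; nothing in it would fail.

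Two minor points worth tightening. First, in a multiply connected domain the reconstruction $u=K[\Omega]$ is not determined by $\Omega$ alone: the constants of $\psi$ on the inner boundary components are fixed by the circulations of $u$ around those components, which are conserved by the flow and equal those of $u_0$ --- not necessarily zero as you wrote. Second, Kato's original 1967 argument is organized somewhat differently (successive approximations set up directly at the level of the velocity, with a priori H\"older estimates obtained without the explicit Beale--Kato--Majda-type logarithmic step), but your vorticity-based route is equally valid and is in fact closer in spirit to how the paper itself handles the micropolar system.
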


%

\vskip .3in

\section{Global existence of $H^1$-weak solutions}
\label{globalweak}
\setcounter{equation}{0}

This section proves the global existence of $H^1$-weak solutions of
(\ref{eq1})-(\ref{eq20}). This result is an important step in
the proof of Theorem \ref{T1}. To be more precise, we first
provide the definition of weak solutions of (\ref{eq1})-(\ref{eq20})
and then state the main result of this section as a proposition.

\begin{definition}\label{weak}
Let $D\subset\R^2$ be a bounded domain with smooth boundary.
Assume $(u_0,w_0)\in H^1(D)$. A pair of measurable functions $(u, w)$ is called a weak solution of (\ref{eq1})-(\ref{eq20}) if
\beno
(1)&&u\in C(0,T;H^1(D)),\,\,w\in C(0,T;L^2(D))\cap L^2(0,T;H_0^1(D));\\
(2)&&\int_{D}u_0\cdot\varphi_0dx+\int_0^T\int_{D}\big[u\cdot\varphi_t+u\cdot\nabla\varphi\cdot u+2\kappa \nabla^{\perp}w\cdot\varphi\big]dxdt=0,\\
&&\int_0^T\int_{D}\big[w\psi_t+\gamma\nabla w\cdot\nabla\psi+4\kappa w\psi+u\cdot\nabla\psi\cdot w-2\kappa \nabla\times u\psi\big]dxdt\\
&&=-\int_{D}w_0\cdot\psi_0dx;
\eeno
holds for any $(\varphi,\,\psi)\in C^\infty([0,T]\times D)$ with $\nabla\cdot\varphi=\varphi|_{\p D}=\varphi(x,T)=0$ and $\psi\cdot\bm{n}|_{\p D}=\psi(x,T)=0.$
\end{definition}

\vskip .1in
The main result of this section is stated in the following proposition.

\begin{proposition}\label{weak2}
Let $D\subset\R^2$ be a bounded domain with smooth boundary. Assume $(u_0,w_0)\in H^{1}(D)$. Then (\ref{eq1})-(\ref{eq20}) has a global
weak solution.
\end{proposition}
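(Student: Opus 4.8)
The plan is to construct the weak solution via a Galerkin-type or, more naturally in this non-fully-dissipative setting, a Schauder fixed point argument combined with the a priori $H^1$ bound that the introduction already advertises as the ``first step.'' First I would set up the linearized problem: given a divergence-free vector field $v$ with $v\cdot{\bm n}|_{\p D}=0$ in a suitable class (say $v\in C([0,T];H^1(D))$ with a uniform bound), solve the decoupled linear system
\begin{equation*}
u_t+v\cdot\nabla u+\nabla\pi=2\kappa\nabla^\perp w,\qquad \nabla\cdot u=0,\qquad u\cdot{\bm n}|_{\p D}=0
\end{equation*}
for $u$ (a linear transport equation for the velocity, or equivalently for the vorticity $\Omega_t+v\cdot\nabla\Omega=-2\kappa\Delta w$), and
\begin{equation*}
w_t-\gamma\Delta w+4\kappa w+v\cdot\nabla w=2\kappa\nabla\times u,\qquad w|_{\p D}=0
\end{equation*}
for $w$, a linear parabolic equation to which Lemma \ref{heat} and standard parabolic theory apply. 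Actually since the two equations are still coupled through the right-hand sides, I would iterate once more or treat $(u,w)\mapsto(\tilde u,\tilde w)$ as a single map $\mathcal{T}$; the map sends $v$ (or the pair) into the solution, and a fixed point of $\mathcal{T}$ is a weak solution.

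Next I would derive the uniform a priori estimates that make the fixed point argument close. The basic energy estimate: testing the $u$-equation with $u$ and the $w$-equation with $w$, using $\nabla\cdot u=0$, $u\cdot{\bm n}|_{\p D}=0$, $w|_{\p D}=0$, and the identity $\int 2\kappa\nabla^\perp w\cdot u=\int 2\kappa\,w\,\nabla\times u$ (integration by parts, boundary terms vanish), the cross terms cancel and one gets
\begin{equation*}
\frac{d}{dt}\big(\|u\|_{L^2}^2+\|w\|_{L^2}^2\big)+2\gamma\|\nabla w\|_{L^2}^2+8\kappa\|w\|_{L^2}^2\le 0,
\end{equation*}
hence an $L^\infty_t L^2_x$ bound on $(u,w)$ and an $L^2_t H^1_x$ bound on $w$ depending only on the data. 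For the $H^1$ bound on $u$ I would use the vorticity equation $\Omega_t+v\cdot\nabla\Omega=-2\kappa\Delta w$ together with the equation for $\nabla w$: testing $\nabla(w\text{-equation})$ by $\nabla w$ controls $\|\nabla w\|_{L^2}$ and, crucially, produces $\|\Delta w\|_{L^2_tL^2_x}$, while the transport structure gives $\|\Omega(t)\|_{L^2}\le\|\Omega_0\|_{L^2}+2\kappa\int_0^t\|\Delta w\|_{L^2}$; combining these with the coupling and a Gronwall argument yields a uniform bound for $\|u\|_{L^\infty_tH^1}+\|w\|_{L^\infty_tH^1}+\|\Delta w\|_{L^2_tL^2_x}$ in terms of $\|u_0\|_{H^1}$ and $\|w_0\|_{H^1}$ alone. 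This is where I expect the main work: controlling the ``vortex stretching'' term $\int \nabla v\cdot\nabla w\cdot\nabla w$ (or the corresponding term after taking $\nabla$ of the $w$-equation) requires the Gagliardo-Nirenberg inequalities of Corollary \ref{C1} and absorbing a $\|\Delta w\|_{L^2}^2$ piece into the dissipation, and the fact that the velocity has \emph{no} smoothing means every velocity estimate must be bought from transport bounds on $\Omega$, which in turn cost $\|\Delta w\|_{L^2_tL^2_x}$ — so the estimates must be closed simultaneously rather than sequentially.

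With the uniform bound in hand I would verify that $\mathcal{T}$ maps a suitable closed convex bounded set $\mathcal{K}$ — e.g.\ $\{v\in L^2(0,T;H^1_0 \text{ or divergence-free }H^1):\ \|v\|\le M,\ \nabla\cdot v=0,\ v\cdot{\bm n}|_{\p D}=0\}$ with $M$ the a priori constant — into itself, that $\mathcal{T}$ is continuous on $\mathcal{K}$ in the $L^2(0,T;L^2(D))$ topology, and that $\mathcal{T}(\mathcal{K})$ is precompact there. Compactness follows from Aubin–Lions: the image has uniformly bounded $\|w\|_{L^2_tH^1}$ and, from the equations, $\|w_t\|_{L^2_tH^{-1}}$ and $\|u_t\|_{L^2_tH^{-1}}$ bounded (the latter using $u_t=-\mathbb{P}(v\cdot\nabla u)+2\kappa\mathbb{P}\nabla^\perp w$ with $\mathbb{P}$ the Leray projector on $D$), so $w$ is compact in $L^2_tL^2_x$; for $u$, the $L^\infty_tH^1$ bound plus $u_t\in L^2_tH^{-1}$ gives compactness in $L^2_tL^2_x$ as well. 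Then Schauder's fixed point theorem yields $v=\mathcal{T}(v)=:(u,w)$. Finally I would check that this fixed point satisfies Definition \ref{weak}: the regularity $u\in C(0,T;H^1(D))$ follows from the transport equation for $\Omega$ with $\Delta w\in L^2_tL^2_x$ (giving $\Omega\in C_tL^2$ hence $u\in C_tH^1$) plus Lemma \ref{curl}, $w\in C(0,T;L^2)\cap L^2(0,T;H^1_0)$ is the parabolic regularity, and the integral identities are obtained by multiplying the equations by the test functions $(\varphi,\psi)$ and integrating by parts in space and time, the boundary terms vanishing by the constraints imposed on $\varphi,\psi$ and on $(u,w)$. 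The passage to the limit in the nonlinear terms $u\cdot\nabla\varphi\cdot u$ and $u\cdot\nabla\psi\cdot w$ uses the strong $L^2_tL^2_x$ convergence established for the compactness step, applied along the approximating sequence built to run the fixed point argument (or directly, since the fixed point already solves the system).
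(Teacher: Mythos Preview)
Your overall strategy—a priori $H^1$ bounds followed by Schauder's fixed point theorem—matches the paper's. Two points deserve comment.

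First, a small correction: the cross terms in the $L^2$ estimate do \emph{not} cancel. With the paper's convention $\nabla^\perp w=(\partial_2 w,-\partial_1 w)$, integration by parts (using $w|_{\partial D}=0$) gives $\int_D(\nabla\times u)\,w\,dx=\int_D u\cdot\nabla^\perp w\,dx$, so the two right-hand terms \emph{add} to $4\kappa\int_D u\cdot\nabla^\perp w\,dx$. The $L^2$ bound still closes via Young's inequality and Gronwall (this is the paper's bound $A_1(t)$ in \eqref{A1}), so your argument survives, but the energy is not monotone.

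Second, and more substantively, the paper's fixed-point setup differs from yours in a way that matters. You freeze the advecting velocity $v$ and propose to solve the linear transport problem $u_t+v\cdot\nabla u+\nabla\pi=2\kappa\nabla^\perp w$ (equivalently $\Omega_t+v\cdot\nabla\Omega=-2\kappa\Delta w$); but with $v$ only in $L^\infty_tH^1(D)$ the drift is not Lipschitz, so existence, uniqueness, and—crucially for Schauder—\emph{continuity} of the solution map $v\mapsto u$ all require justification you have not supplied (DiPerna--Lions theory plus a stability argument, at minimum). The paper sidesteps this entirely: its fixed-point variable is a proxy $g$ for the scalar $w$, not for $u$; it mollifies $g$ to $g^\epsilon$ and then solves the \emph{full nonlinear} 2D Euler equation $u_t+u\cdot\nabla u+\nabla\pi=2\kappa\nabla^\perp g^\epsilon$ on $D$ via Kato's classical existence theorem (Lemma~\ref{Euler}), obtaining a smooth $u^\epsilon$; the subsequent linear parabolic equation for $w^\epsilon$ with smooth drift is then standard. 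The resulting map $F^\epsilon:g\mapsto w^\epsilon$ acts on a ball in $C_tH^1_0\cap L^2_tH^2$, Schauder is applied for small $T$, and the a priori $H^1$ bounds (your ``main work,'' the paper's Lemma~\ref{L31}) extend the solution globally. This buys simplicity at the cost of invoking Kato; your route avoids Kato but leaves the rough-transport continuity issue open.
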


\vskip .1in
The proof of this proposition relies on the following global $H^1$-bound.

\begin{lemma}\label{L31}
Under the assumptions of  Proposition \ref{weak2}, for any $T>0$, there exists a constant $C$ depending only on $T$ and the initial data
such that
\beno
&&\|u\|_{L^\infty(0,T;H^1(D))}+\|w\|_{L^\infty(0,T;H^1(D))}
+\|w\|_{L^2(0,T;H^2(D))}\leq\, C.
\eeno
\end{lemma}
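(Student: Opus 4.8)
The plan is to derive the bound in two layers: first the basic energy estimate, then an $H^1$-level estimate built on the vorticity equation \eqref{vor} and the equation for $\nabla w$. For the energy estimate, I would test the $u$-equation with $u$ and the $w$-equation with $w$, integrate by parts, and use $\nabla\cdot u=0$ together with the boundary conditions $u\cdot{\bm n}|_{\p D}=w|_{\p D}=0$ to kill the transport and pressure terms. The coupling terms $2\kappa\nabla^\perp w\cdot u$ and $-2\kappa(\nabla\times u)\,w$ are, up to a boundary term that vanishes, negatives of each other after integration by parts, so they cancel. This yields
\[
\frac{d}{dt}\big(\|u\|_{L^2}^2+\|w\|_{L^2}^2\big)+2\gamma\|\nabla w\|_{L^2}^2+8\kappa\|w\|_{L^2}^2\le 0,
\]
hence a uniform bound on $\|u(t)\|_{L^2}^2+\|w(t)\|_{L^2}^2$ and on $\int_0^T\|\nabla w\|_{L^2}^2\,dt$.

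The second layer is the $H^1$-bound, and this is where the main difficulty lies, because there is no dissipation in the $u$-equation: one cannot directly close an estimate for $\|\nabla u\|_{L^2}=\|\Omega\|_{L^2}$ via smoothing. The idea is to pair the vorticity equation \eqref{vor}, $\partial_t\Omega+u\cdot\nabla\Omega=-2\kappa\Delta w$, with the equation obtained by applying $\nabla$ to the $w$-equation, schematically
\[
\partial_t\nabla w-\gamma\Delta\nabla w+4\kappa\nabla w+\nabla(u\cdot\nabla w)=2\kappa\nabla(\nabla\times u)=2\kappa\nabla\Omega^{\perp}\ \text{(up to signs)}.
\]
I would compute $\frac{d}{dt}\big(\|\Omega\|_{L^2}^2+\|\nabla w\|_{L^2}^2\big)$. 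Testing \eqref{vor} with $\Omega$ gives $\frac12\frac{d}{dt}\|\Omega\|_{L^2}^2=-2\kappa\int\Delta w\,\Omega\,dx=2\kappa\int\nabla w\cdot\nabla\Omega\,dx$ (the transport term vanishes since $\nabla\cdot u=0$ and $u\cdot{\bm n}|_{\p D}=0$; one should check the boundary term from the integration by parts on $\Delta w$ carefully, or alternatively test with $\Omega$ directly and move the Laplacian). Testing the $\nabla w$-equation with $\nabla w$ produces $-\gamma\|\Delta w\|_{L^2}^2$ (a good term, modulo boundary terms controlled since $w|_{\p D}=0$), the term $2\kappa\int\nabla\Omega^{\perp}\cdot\nabla w\,dx$, which after integration by parts cancels against the $+2\kappa\int\nabla w\cdot\nabla\Omega$ coming from the $\Omega$ estimate (this is the structural cancellation that replaces the missing velocity dissipation), plus the "vortex stretching" contribution $-\int\nabla(u\cdot\nabla w)\cdot\nabla w\,dx$. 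Expanding, the genuinely dangerous term is $\int(\nabla u\cdot\nabla w)\cdot\nabla w\,dx$, which I would bound by $\|\nabla u\|_{L^2}\|\nabla w\|_{L^4}^2\le C\|\Omega\|_{L^2}\big(\|\nabla w\|_{L^2}^{1/2}\|\nabla^2 w\|_{L^2}^{1/2}+\|\nabla w\|_{L^2}\big)^2$ via Corollary \ref{C1}(1) (applied to $\nabla w$), then absorb $\|\nabla^2 w\|_{L^2}^2$ into $\gamma\|\Delta w\|_{L^2}^2$ (using elliptic regularity, Lemma \ref{elliptic}, with $w|_{\p D}=0$) at the cost of a factor $C\|\Omega\|_{L^2}^2\|\nabla w\|_{L^2}^2$ plus lower-order terms.

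This leads to a differential inequality of the form
\[
\frac{d}{dt}\big(\|\Omega\|_{L^2}^2+\|\nabla w\|_{L^2}^2\big)+\frac{\gamma}{2}\|\Delta w\|_{L^2}^2\le C\big(1+\|\Omega\|_{L^2}^2\big)\big(\|\Omega\|_{L^2}^2+\|\nabla w\|_{L^2}^2\big)+C,
\]
and the key point is that the coefficient multiplying the $H^1$-norm is itself controlled: either the troublesome factor is $\|\nabla w\|_{L^2}^2$, which is integrable in time by the energy estimate (so Grönwall applies directly), or one arranges the interpolation so the large prefactor is $\|\nabla w\|_{L^2}^2\in L^1(0,T)$. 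Thus Grönwall's inequality gives the uniform-in-$[0,T]$ bound on $\|u\|_{H^1}+\|w\|_{H^1}$, and integrating the resulting inequality once more in time yields $\|w\|_{L^2(0,T;H^2)}\le C$ after invoking elliptic regularity to pass from $\|\Delta w\|_{L^2}$ to $\|w\|_{H^2}$. The main obstacle, to repeat, is handling the vortex-stretching term $\int\nabla u\cdot\nabla w\cdot\nabla w$ in the absence of velocity dissipation: the resolution is the exact cancellation between the coupling terms in the $\Omega$ and $\nabla w$ equations, which lets the $w$-dissipation alone control both $\nabla\Omega$-type interactions and the nonlinearity, provided one is careful that all boundary terms arising from integration by parts vanish by virtue of \eqref{eq2}.
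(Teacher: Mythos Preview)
Your proposal contains a genuine error at the heart of the argument: the claimed cancellation of the coupling terms is wrong, at \emph{both} the $L^2$ and the $H^1$ level.

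At the $L^2$ level, integrating by parts with $w|_{\partial D}=0$ gives
\[
\int_D (\nabla\times u)\,w\,dx \;=\; \int_D u\cdot\nabla^{\perp}w\,dx,
\]
so the two coupling contributions have the \emph{same} sign and add:
\[
2\kappa\int_D \nabla^\perp w\cdot u\,dx + 2\kappa\int_D (\nabla\times u)\,w\,dx \;=\; 4\kappa\int_D \nabla^\perp w\cdot u\,dx.
\]
This is exactly what the paper records in \eqref{bbb}. Your inequality $\frac{d}{dt}(\|u\|_{L^2}^2+\|w\|_{L^2}^2)+2\gamma\|\nabla w\|_{L^2}^2+8\kappa\|w\|_{L^2}^2\le 0$ is therefore not correct; one must bound $4\kappa\int u\cdot\nabla^\perp w$ by Cauchy--Schwarz, absorb part into $\gamma\|\nabla w\|_{L^2}^2$, and apply Gronwall, getting an exponentially growing bound as in \eqref{A1}.

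The same sign issue recurs at the $H^1$ level. Whether you test the $w$-equation with $-\Delta w$ (as the paper does) or differentiate and test with $\nabla w$, the contribution from $2\kappa\,\Omega$ on the right of the $w$-equation and the contribution $-2\kappa\int\Delta w\,\Omega$ from testing \eqref{vor} with $\Omega$ again \emph{add}, yielding $-4\kappa\int_D\Delta w\,\Omega\,dx$. There is no ``structural cancellation that replaces the missing velocity dissipation.'' (Incidentally, $\nabla(\nabla\times u)=\nabla\Omega$, not $\nabla\Omega^\perp$.) The paper simply bounds this term by $4\kappa\|\Omega\|_{L^2}\|\Delta w\|_{L^2}$, absorbs half of $\|\Delta w\|_{L^2}^2$ into the dissipation, and is left with $C\|\Omega\|_{L^2}^2$ on the right; combined with the estimate of $\int u\cdot\nabla w\,\Delta w$ (which the paper handles via $\|u\|_{L^4}\|\nabla w\|_{L^4}\|\Delta w\|_{L^2}$ rather than expanding $\nabla(u\cdot\nabla w)$), Gronwall closes because the only nonconstant coefficient is $\|\nabla w\|_{L^2}^2\in L^1(0,T)$ from the first step.

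A secondary issue: your route of applying $\nabla$ to the $w$-equation and testing with $\nabla w$ produces the boundary term $-\gamma\int_{\partial D}\Delta w\,\partial_n w\,ds$, which does \emph{not} vanish (only $w|_{\partial D}=0$, not $\partial_n w$). The paper avoids this by multiplying the $w$-equation directly by $-\Delta w$; then all boundary contributions vanish because $w_t|_{\partial D}=w|_{\partial D}=0$. Once you correct the sign of the coupling and switch to testing with $-\Delta w$, your outline becomes essentially the paper's proof.
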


\begin{proof}[Proof of Lemma \ref{L31}]
We start with the  global $L^2$-bound. Taking the inner product of $\eqref{eq1}$ with $(u, w)$  yields
\beno
&&\f12\f{d}{dt}(\|u\|_{L^2(D)}^2+\|w\|_{L^2(D)}^2)+\gamma\|\nabla w\|_{L^2(D)}^2+4\kappa\|w\|_{L^2(D)}^2\\
&=&2\kappa\int_{D}\nabla^{\perp}w\cdot u\,dx+2\kappa\int_{D}\nabla\times uwdx.
\eeno
Noticing that $\nabla\times u = \partial_{1} u_2-\partial_{2} u_1$
and $\nabla^{\perp}w =(\partial_{2} w, -\partial_{1} w)$, we have
$$
\nabla\times u w =(\partial_{1} u_2-\partial_{2} u_1)w
= \partial_{1}(u_2\, w) - \partial_{2} (u_1\,w)+ \,\nabla^{\perp}w\cdot u.
$$
Integrating by parts and applying the boundary condition for $w$, we have
\ben
&& 2\kappa\int_{D}\nabla^{\perp}w\cdot u\,dx+ 2\kappa\int_{D}\nabla\times uw\,dx  \notag\\
&=&4\kappa\,\int_{D}\nabla^{\perp}w\cdot u\,dx +  2\kappa\int_{\p D}u\cdot{{\bm n}^\perp}wds  \notag\\
&=&4\kappa\,\int_{D}\nabla^{\perp}w\cdot u\,dx  \label{bbb}\\
&\le& \frac{\gamma}{2}\, \|\nabla w\|_{L^2(D)}^2 + C\, \|u\|_{L^2(D)}^2, \notag
\een
where $\mathbf{n}^\perp=(-n_2,n_1)$.
It then follows, after integration in time, that
\ben
&&\|u\|_{L^2(D)}^2+\|w\|_{L^2(D)}^2+\gamma\,\int_{0}^{t}\|\nabla w\|_{L^2(D)}^2\,d\tau+8\kappa\,\int_{0}^{t}\|w\|_{L^2(D)}^2d\tau
\notag\\
&\leq& e^{C\, t}(\|u_0\|_{L^2(D)}^2+\|w_0\|_{L^2(D)}^2) \equiv A_1(t,
\|(u_0, w_0)\|_{L^2}),
\label{A1}
\een
where $C=C(\gamma,\kappa)$. To obtain the global $H^1$-bound for $(u,w)$, we invoke the vorticity
equation
\ben\label{eq31}
\Omega_t+u\cdot\nabla \Omega=-2\kappa\Delta w,
\een
Multiplying (\ref{eq31}) by $\Omega$ and the equation of
$w$ in $(\ref{eq1})$ by $\Delta w$, and applying the boundary
condition $w|_{\p D}=0$, the Cauchy-Schwarz inequality and
Corollary {\ref{C1}}, we have
\ben
&&\f12\f{d}{dt}(\|\Omega\|_{L^2(D)}^2+\|\nabla w\|_{L^2(D)}^2)+\gamma\|\Delta w\|_{L^2(D)}^2+4\kappa\|\nabla w\|_{L^2(D)}^2 \notag\\
&=&\int_{D}u\cdot\nabla w\,{\Delta}w\,dx
-4\kappa\int_{D}\Delta w\,\Omega\, dx \notag\\
&\leq&\|\Delta w\|_{L^2(D)}\|u\|_{L^4(D)}\|\nabla w\|_{L^4(D)}+4\kappa\|\Omega\|_{L^2(D)}\|\Delta w\|_{L^2(D)} \notag\\
&\leq&C\|\Delta w\|_{L^2(D)}^{\f32}\|\Omega\|_{L^2(D)}^{\f12}\|\nabla w\|_{L^2(D)}^{\f12}+C\|\Delta w\|_{L^2(D)}\|\Omega\|_{L^2(D)}^{\f12}\|\nabla w\|_{L^2(D)} \notag\\
&&+4\kappa\|\Omega\|_{L^2(D)}\|\Delta w\|_{L^2(D)} \notag\\
&\leq&\f{\gamma}2\|\Delta w\|_{L^2(D)}^{2}+C\|\nabla w\|_{L^2(D)}^{2}(1+\|\Omega\|_{L^2(D)}^{2})+C\|\Omega\|_{L^2(D)}^2.
\label{A102}
\een
Gronwall's inequality and \eqref{A1} then yield
the following global $H^1$-bound
\ben
&&\|\nabla u(t)\|_{L^2(D)}^2+\|\nabla w(t)\|_{L^2(D)}^2
+\gamma \,\int_0^t\|\Delta w\|_{L^2(D)}^2 \,d\tau + 4\kappa\,\int_0^t \|\nabla w\|_{L^2(D)}^2\,d\tau \notag \\
&\le& C_1\,e^{C_2\, e^{C_3 t}}\,\|(\nabla u_0, \nabla w_0)\|^2_{L^2(D)} \equiv A_2(t), \label{A2}
\een
where $C_1=C_1(\gamma,\kappa)$, $C_2=C_2(\gamma,\kappa,\|(u_0, w_0)\|_{L^2(D)})$ and
$C_3=C_3(\gamma,\kappa)$. This completes the proof of Lemma \ref{L31}.
\end{proof}

\vskip .1in
We now prove Proposition \ref{weak2}.

\begin{proof}[Proof of Proposition \ref{weak2}]
The proof is a consequence of Schauder's fixed point theorem.
We shall only provide the sketches.

\vskip .1in
To define the functional setting, we fix $T>0$ and $R_0$ to
be specified later. For notational convenience, we
write
$$
X\equiv C(0,T; H^1_0(D))\cap L^2(0,T; H^2(D))
$$
with $\|g\|_X\equiv \|g\|_{C(0,T; H^1_0(D))}+\|g\|_{L^2(0,T; H^2(D))}$,
and define
$$
B=\{g\in X\,|\,\|g\|_X\leq R_0\}.
$$
Clearly, $B\subset X$ is closed and convex.

\vskip .1in
We fix $\epsilon\in(0, 1)$ and define a continuous map on $B$. For any $g\in B$, we regularize it and the initial data $(u_0, w_0)$ via the
standard mollifying process,
$$
g^{\epsilon}= \rho^{\epsilon}\ast g, \quad u_{0}^{\epsilon} = \rho^{\epsilon}\ast u_0, \quad w_{0}^{\epsilon} = \rho^{\epsilon}\ast w_0,
$$
where $\rho^{\epsilon}$ is the standard mollifier. According to
Lemma \ref{Euler}, the 2D incompressible Euler equations with smooth external forcing $2\kappa\nabla^{\perp}g^{\epsilon}$ and smooth initial data $u_0^{\epsilon}$
\begin{equation}\label{weq1}
\left\{\begin{array}{ll}
u_t+u\cdot\nabla u+\nabla\pi=2\kappa{\nabla^{\perp}}g^{\epsilon},\\
\nabla\cdot u=0,\\
u(x,0)=u_0^{\epsilon}(x),\quad u\cdot{\bm n}|_{\p D}=0,
\end{array}\right.
\end{equation}
has a unique solution $u^{\epsilon}$. We then solve the linear parabolic equation with the smooth initial data $w_0^{\epsilon}$
\begin{equation}\label{weq2}
\left\{\begin{array}{ll}
w_t-\gamma\Delta w+4\kappa w+u^{\epsilon}\cdot \nabla w=2\kappa \nabla\times u^{\epsilon},\\
w(x,0)=w_0^{\epsilon}(x),\quad w|_{\p D}=0,
\end{array}\right.
\end{equation}
and denote the solution by $w^{\epsilon}$. This process
allows us to define the map
$$
F^{\epsilon}(g)=w^{\epsilon}.
$$
We then apply Schauder's fixed point theorem to construct a sequence of approximate solutions to \eqref{eq1}-\eqref{eq20}. It suffices to show that, for any fixed $\epsilon\in(0, 1)$, $F^{\epsilon}: B\rightarrow B$
is continuous and compact. More precisely, we need to show
\begin{enumerate}
\item[(a)] $\|w^{\epsilon}\|_{B}\leq R_0$;
\item[(b)] $\|F^{\epsilon}(g_1)-F^{\epsilon}(g_2)\|_{B}\leq C\|g_1-g_2\|_{B}$ for $C$ indepedent of $\epsilon$ and any $g_1,\,g_2\in B$.
\end{enumerate}
We verify (a) first. A simple $L^2$-estimate on (\ref{weq1}) leads to
\beno
\|u^\epsilon(t)\|_{L^2(D)} &\le& \|u_0^\epsilon\|_{L^2(D)} + 2\kappa\,\int_0^t \|{\nabla}g^{\epsilon}\|_{L^2(D)}\,d\tau, \\
&\le& \|u_0\|_{L^2(D)} + 2\kappa\,\int_0^t \|{\nabla}g\|_{L^2(D)}\,d\tau.
\eeno
Similar to (\ref{A1}), we have
$$
\|w^\epsilon\|^2_{L^2(D)} + \gamma \,\int_0^t
\|\nabla w^\epsilon\|^2_{L^2(D)}\,d\tau
+ 4 \kappa \int_0^t \|w^\epsilon\|^2_{L^2(D)}\,d\tau
\le \|w_0\|^2_{L^2(D)} + \frac{2\kappa^2}{\gamma}\,\int_0^t \|u^\epsilon\|^2_{L^2(D)}\,d\tau.
$$
To bound the $H^1$-norms, we rely on the equation of $\Omega^\epsilon$,
$$
\Omega^\epsilon_t + u^\epsilon\cdot\nabla \Omega^\epsilon=-2\kappa\Delta g^\epsilon.
$$
A simple energy estimate then yields
$$
\|\Omega^\epsilon\|_{L^2(D)} \le \|\Omega_0\|_{L^2(D)}
+ 2 \kappa\,\int_0^t \|\Delta g\|_{L^2(D)} \,d\tau.
$$
As in (\ref{A102}), we have
$$
\frac{d}{dt} \|\nabla w^\epsilon\|_{L^2(D)}^2
+ \gamma \|\Delta w^\epsilon\|^2_{L^2(D)}\,
+ 4 \kappa \|\nabla w^\epsilon\|^2_{L^2(D)}\,
\le (1+ \|\Omega^\epsilon\|_{L^2(D)}^2)\,(1 + \|\nabla w^\epsilon\|_{L^2(D)}^2).
$$
By Gronwall's inequality,
\beno
&& \|\nabla w^\epsilon\|_{L^2(D)}^2 +\,\gamma \,\int_0^t
\|\Delta w^\epsilon\|^2_{L^2(D)}\,d\tau
+ 4 \kappa \int_0^t \|\nabla w^\epsilon\|^2_{L^2(D)}\,d\tau\\
&& \quad\le (1+\|\nabla w_0\|_{L^2(D)}^2) \int_0^t (1+ \|\Omega^\epsilon\|_{L^2(D)}^2)\,e^{C\,\int_0^\tau (1+ \|\Omega^\epsilon\|_{L^2(D)}^2)\,ds}\,d\tau.
\eeno
Combining the estimates yields
\beno
&& \|w^\epsilon\|^2_{H^1(D)} + \gamma \,\int_0^t \|w^\epsilon\|^2_{H^2(D)}\,d\tau
\le \|w_0\|^2_{L^2(D)}  + \frac{2\kappa^2}{\gamma}\,\int_0^t \|u^\epsilon\|^2_{L^2(D)}\,d\tau\\
&&\qquad + (1+\|\nabla w_0\|_{L^2(D)}^2) \int_0^t (1+ \|\Omega^\epsilon\|_{L^2(D)}^2)\,e^{C\,\int_0^\tau (1+ \|\Omega^\epsilon\|_{L^2(D)}^2)\,ds}\,d\tau.
\eeno
In order for $F^\epsilon$ to map $B$ to $B$, it suffices for the
right-hand side to be bounded by $R_0$. Invoking the bounds for $\|u^\epsilon\|_{L^2}$ and $\|\Omega^\epsilon\|_{L^2}$, we obtain a condition for $T$ and $R_0$,
\ben
&& \|w_0\|_{L^2(D)}^2 + C T\,(1+ \|u_0\|_{L^2(D)}^2 + \|\nabla w_0\|_{L^2(D)}^2
T R^2_0) \notag \\
&& \qquad  +  C T\,(1 + \|\Omega_0\|_{L^2(D)}^2 + T \, R^2_0)\exp(C(\|\Omega_0\|_{L^2(D)}^2 + T \, R^2_0))) \le R_0,
\label{TR0}
\een
where the constants $C$ depend only on the parameters
$\kappa$ and $\gamma$. It is not difficult to see that,
if $T$ is sufficiently small, (\ref{TR0}) would hold.
Similarly, we can show (b) under the condition that $T$
is sufficiently small. Schauder's fixed point theorem then allows
us to conclude that the existence of a solution on
a finite time interval $T$. These uniform estimates
would allow us to pass the limit to obtain a
weak solution $(u,w)$.

\vskip .1in
We remark that the local solution obtained by Schauder's fixed
point theorem can be easily extended into a global solution via
Picard type extension theorem due to the global bounds obtained in
(\ref{A1}) and (\ref{A2}). This allows us to obtain the desired
global weak solution. This completes the proof.
\end{proof}

\vskip .3in
\section{Yudovich regularity and proof of Theorem \ref{T1}}
\label{Yudovich}
\setcounter{equation}{0}

\vskip .1in
The goal of this section is to complete the proof of Theorem \ref{T1}.
To do so, we first establish the Yudovich type regularity for the
vorticity $\Omega$, namely $\Omega\in L^\infty$ for all time and then show that $w\in {L^\infty(0,T; H^2(D))}$. The regularity obtained here for
$\Omega$ and $w$ allows us to prove the uniqueness of the weak solutions
established in the previous section.

\vskip .1in
Recall that $\Omega$ satisfies
\begin{equation}\label{vv}
\Omega_t+u\cdot\nabla \Omega=-2\kappa\Delta w.
\end{equation}
Due to the bad term $-\Delta w$ on the right-hand side, this equation
itself does not allow us to extract a global bound on $\Omega$. To bypass this difficulty, we combine (\ref{vv}) with the equation of $w$,
$$
w_t-\gamma\Delta w+4\kappa w-2\kappa \nabla\times u+u\cdot \nabla w=0
$$
to eliminate the bad term. More precisely, we consider the combined
quantity
$$
Z=\Omega+\frac{2\kappa}{\gamma}w
$$
and the equation that it satisfies
\begin{equation}\label{Z-equ}
	\partial_tZ+u\cdot\nabla Z-\frac{4\kappa^2}{\gamma}Z+\frac{8\kappa^2}{\gamma}(1+\frac\kappa\gamma)w=0,
	\end{equation}
which leads us to the desired global bound. More precisely, we have the
following proposition.

\begin{proposition}\label{w-w2p}
Assume that $(u_0, w_0)$ satisfies the conditions stated in Theorem \ref{T1}.  Let $(u,w)$ be the global weak solution obtained in Proposition
\ref{weak2}. Then the corresponding vorticity $\Omega$ obeys the global bound, for any $2\leq p<\infty$, and any $T>0$ and $0<t\le T$,
\begin{equation*}
\|\Omega\|_{L^\infty(0,T; L^p(D))}\leq C,
\end{equation*}
where the constant $C$ depends only on $D,T$ and the initial data.
\end{proposition}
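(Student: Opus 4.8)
The plan is to exploit the favorable structure of equation \eqref{Z-equ}: the combined quantity $Z=\Omega+\frac{2\kappa}{\gamma}w$ solves a \emph{transport} equation modulo the zeroth-order linear terms $-\frac{4\kappa^2}{\gamma}Z$ and $\frac{8\kappa^2}{\gamma}(1+\frac\kappa\gamma)w$. Consequently no derivative is lost and a direct $L^p$ energy estimate on $Z$ closes, provided we already control $w$ in $L^p$. The latter is available for free: by Lemma \ref{L31} we have $w\in L^\infty(0,T;H^1(D))$ with a bound depending only on $T$ and the data, and since $D\subset\R^2$ the Gagliardo--Nirenberg embedding $H^1(D)\hookrightarrow L^p(D)$ (Lemma \ref{P1}, or Corollary \ref{C1}(1) when $p=4$) gives
$$
\sup_{0\le t\le T}\|w(t)\|_{L^p(D)}\le C(p,D,T,u_0,w_0).
$$

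The core step is then to multiply \eqref{Z-equ} by $|Z|^{p-2}Z$ and integrate over $D$. Because $\nabla\cdot u=0$ and $u\cdot{\bm n}|_{\p D}=0$, the transport term drops out: $\int_D u\cdot\nabla Z\,|Z|^{p-2}Z\,dx=\frac1p\int_D u\cdot\nabla(|Z|^p)\,dx=0$. This yields
$$
\frac1p\frac{d}{dt}\|Z\|_{L^p(D)}^p=\frac{4\kappa^2}{\gamma}\|Z\|_{L^p(D)}^p-\frac{8\kappa^2}{\gamma}\Big(1+\frac\kappa\gamma\Big)\int_D w\,|Z|^{p-2}Z\,dx,
$$
and after bounding the last term by Hölder's inequality, $\big|\int_D w\,|Z|^{p-2}Z\,dx\big|\le\|w\|_{L^p(D)}\|Z\|_{L^p(D)}^{p-1}$, and dividing through by $\|Z\|_{L^p(D)}^{p-1}$, we obtain the differential inequality
$$
\frac{d}{dt}\|Z\|_{L^p(D)}\le\frac{4\kappa^2}{\gamma}\|Z\|_{L^p(D)}+\frac{8\kappa^2}{\gamma}\Big(1+\frac\kappa\gamma\Big)\|w\|_{L^p(D)}.
$$
Feeding in the uniform bound on $\|w\|_{L^p(D)}$ and applying Gronwall's inequality gives $\|Z(t)\|_{L^p(D)}\le e^{\frac{4\kappa^2}{\gamma}t}\big(\|Z(0)\|_{L^p(D)}+C\,t\big)$ for $0\le t\le T$. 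Since $Z(0)=\Omega_0+\frac{2\kappa}{\gamma}w_0$ with $\Omega_0\in L^\infty(D)\subset L^p(D)$ and $w_0\in H^2(D)\subset L^p(D)$, the right-hand side is finite and controlled by the data. Finally, writing $\Omega=Z-\frac{2\kappa}{\gamma}w$ we conclude $\|\Omega(t)\|_{L^p(D)}\le\|Z(t)\|_{L^p(D)}+\frac{2\kappa}{\gamma}\|w(t)\|_{L^p(D)}\le C$, which is the assertion.

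I expect the only genuine point requiring care --- rather than a true obstacle --- to be the rigor: the weak solution of Proposition \ref{weak2} is not a priori regular enough for the test function $|Z|^{p-2}Z$ to be admissible in \eqref{Z-equ}. I would therefore carry out the estimate on the $\epsilon$-regularized solutions used in the proof of Proposition \ref{weak2} (there $u^\epsilon\in C^{1+\gamma}$ and $w^\epsilon$ solves a linear parabolic problem, so $Z^\epsilon$ is smooth and the computation above is legitimate), with all constants independent of $\epsilon$, and then pass to the limit, using that the $L^p$ bound is stable under the weak convergence already established. No uniformity in $p$ is claimed or required here; the dependence of the constant on $p$ (through the $H^1\hookrightarrow L^p$ embedding constant and through $\|Z(0)\|_{L^p}$) is harmless, and the subsequent upgrade of $\Omega$ to $L^\infty$ will come from a separate bootstrap argument.
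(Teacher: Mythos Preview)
Your proposal is correct and follows essentially the same route as the paper: work with the combined quantity $Z$, multiply \eqref{Z-equ} by $|Z|^{p-2}Z$, use $\nabla\cdot u=0$ and the boundary condition to kill the transport term, apply H\"{o}lder and Gronwall, and then recover $\Omega$ from $Z$ and $w$.

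One small difference worth noting: you source the $L^p$ bound on $w$ from $w\in L^\infty(0,T;H^1(D))$ via the 2D embedding $H^1\hookrightarrow L^p$, which makes your constant $p$-dependent. The paper instead exploits that its Gronwall constant is independent of $p$, passes to $p\to\infty$ to obtain $\|Z\|_{L^\infty(0,T;L^\infty(D))}\le C$ using $\|w\|_{L^2(0,T;H^2(D))}$ (and $H^2\hookrightarrow L^\infty$), and then deduces the $L^p$ bound on $Z$ uniformly in $p$ from the $L^\infty$ bound on the bounded domain. This buys a $p$-independent constant, matching the statement's wording (``depends only on $D,T$ and the initial data''), and also records the bound \eqref{Z--infty} on $\|Z\|_{L^\infty}$ that is reused in Proposition \ref{omegainfty}. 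Your argument is fine for each fixed $p$; if you want uniformity in $p$, follow the paper's detour through $L^\infty$.
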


\begin{proof} We start with the equation of $Z$, namely (\ref{Z-equ}).
For any $2\leq p<\infty$, multiplying \eqref{Z-equ} with $|Z|^{p-2}Z$ and
	integrating on $D$, we obtain
	\begin{equation*}	\frac1p\frac{d}{dt}\|Z\|_{L^p(D)}^p\leq\frac{4\kappa^2}{\gamma}\|Z\|_{L^p(D)}^p+
	\frac{8\kappa^2}{\gamma}(1+\frac\kappa\gamma)
	\|w\|_{L^p(D)}\|Z\|_{L^p(D)}^{p-1},
	\end{equation*}
	i.e.,
	\begin{equation*}
	\begin{split}
	\frac{d}{dt}\|Z\|_{L^p(D)}&\leq\frac{4\kappa^2}{\gamma}\|Z\|_{L^p(D)}
	+\frac{8\kappa^2}{\gamma}(1+\frac\kappa\gamma)
	\|w\|_{L^p(D)},
	\end{split}
	\end{equation*}
	which, according to Gronwall's inequality, implies
	\begin{equation*}
	\|Z\|_{L^p(D)}\leq e^{\frac{4\kappa^2}{\gamma}t}\left(\|Z_0\|_{L^p(D)}
+\,C\,\int_0^t\|w(\tau)\|_{L^p(D)}d\tau\right).
	\end{equation*}
	Noting that $C$ is independent of $p$, we obtain, by letting $p\rightarrow \infty$,
\begin{equation*}\begin{split}
	\|Z\|_{L^\infty(D)}&\leq e^{\frac{4\kappa^2}{\gamma}t}(\|Z_0\|_{L^\infty(D)}
+ C\, \int_0^t\|w(\tau)\|_{L^\infty(D)}d\tau)
\\&\leq e^{\frac{4\kappa^2}{\gamma}t}(\|Z_0\|_{L^\infty(D)}
+C\, \int_0^t\|w(\tau)\|_{H^2(D)}d\tau).
	\end{split}
	\end{equation*}
    Thus, by noticing that $\|w\|_{L^2(0,T;H^2(D))}\leq\ C$ from Lemma \ref{L31}, it is clear that
	\begin{equation}\label{Z--infty}
	\|Z\|_{L^\infty(0,T; L^p(D))}\leq C,
	\end{equation}
	for any $2\leq p\leq\infty$. By the definition of $Z$, Sobolev embedding and Lemma \ref{L31}, we have
	\begin{equation*}\begin{split}
	\|\Omega\|_{L^\infty(0,T; L^p(D))}&\leq (\|Z\|_{L^\infty(0,T; L^p(D))}+\|w\|_{L^\infty(0,T; L^p(D))})\\&\leq C(\|Z\|_{L^\infty(0,T; L^p(D))}+\|w\|_{L^\infty(0,T; H^1(D))})\\&\leq C.
	\end{split}
	\end{equation*}
for any $2\leq p<\infty$. This completes the proof of Proposition \ref{w-w2p}.
\end{proof}

\vskip .1in
Next we prove the global bound for $\|w\|_{H^2(D)}$.  In contrast to the
whole space case, we need to estimate the time derivatives of $(u, w)$
in order to obtain the desired bound.

\begin{proposition}\label{w-H2-est}
	Assume that $(u_0, w_0)$ satisfies the conditions stated in Theorem \ref{T1}.  Let $(u,w)$ be the global weak solution obtained in Proposition
\ref{weak2}. Then, for any $T>0$ and $0<t<T$,
	\begin{equation*}
	\|u_t\|_{L^\infty(0,T; L^2(D))}+\|w_t\|_{L^\infty(0,T; L^2(D))} + \|w\|_{L^\infty(0,T; H^2(D))}\leq C,
	\end{equation*}
where the constant $C$ depends only on $D, T$ and the initial data.
\end{proposition}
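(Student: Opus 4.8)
The plan is to first derive a global bound for $\|w_t\|_{L^2(D)}$, then recover $\|w\|_{H^2(D)}$ from elliptic regularity applied to the $w$-equation, and finally extract the bound for $\|u_t\|_{L^2(D)}$ directly from the $u$-equation. The starting point is to differentiate the $w$-equation in time. Writing $v = w_t$, we obtain
\[
v_t - \gamma\Delta v + 4\kappa v + u\cdot\nabla v = 2\kappa\,\nabla\times u_t - u_t\cdot\nabla w,
\]
with $v|_{\p D}=0$ (which holds because $w|_{\p D}=0$ for all $t$; note the compatibility conditions \eqref{eq3}--\eqref{eq4} guarantee $v$ is well-defined and has the right initial trace). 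Taking the $L^2$ inner product with $v$, integrating by parts using $v|_{\p D}=0$, and using $\nabla\cdot u=0$ to kill the transport term, I would arrive at
\[
\tfrac12\tfrac{d}{dt}\|w_t\|_{L^2(D)}^2 + \gamma\|\nabla w_t\|_{L^2(D)}^2 + 4\kappa\|w_t\|_{L^2(D)}^2
= 2\kappa\!\int_D \nabla\times u_t\,w_t\,dx - \!\int_D (u_t\cdot\nabla w)\,w_t\,dx .
\]

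The next task is to control the two right-hand terms, and this is where the partial-dissipation structure must be used: there is no dissipation in the $u$-equation, so $u_t$ cannot be estimated by a standard energy argument. Instead I would use the $u$-equation directly, $u_t = -u\cdot\nabla u - \nabla\pi + 2\kappa\nabla^\perp w$, to express $u_t$ pointwise. For the first term, integrating by parts moves the curl onto $w_t$, giving $-2\kappa\int_D u_t\cdot\nabla^\perp w_t\,dx$; substituting the formula for $u_t$, the pressure gradient drops out against the divergence-free test field $\nabla^\perp w_t$, the $2\kappa\nabla^\perp w$ contribution is absorbed into $\gamma\|\nabla w_t\|_{L^2}^2$ (using $\|\nabla w\|_{L^2}\le C$ from Lemma \ref{L31}), and the term $\int_D (u\cdot\nabla u)\cdot\nabla^\perp w_t\,dx$ is bounded by $\|u\|_{L^4}\|\nabla u\|_{L^4}\|\nabla w_t\|_{L^2}$. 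Here I would invoke Lemma \ref{curl} with $m=1$ and Proposition \ref{w-w2p} to get $\|\nabla u\|_{L^4(D)}\le C(\|\Omega\|_{L^4(D)}+\|u\|_{L^4(D)})\le C$, so this term is also absorbable after Young. The second term $\int_D(u_t\cdot\nabla w)w_t\,dx$ is handled similarly after substituting the formula for $u_t$: the worst piece is $\int_D (u\cdot\nabla u)\cdot\nabla w\,w_t\,dx$, controlled by $\|u\|_{L^\infty}\|\nabla u\|_{L^4}\|\nabla w\|_{L^4}\|w_t\|_{L^2}$, with $\|u\|_{L^\infty}\le C$ again from Lemma \ref{curl} ($\|u\|_{W^{1,p}}\hookrightarrow L^\infty$ for $p>2$) plus $\|\nabla w\|_{L^4}\le C(\|\nabla^2 w\|_{L^2}^{3/4}\|w\|_{L^2}^{1/4}+\|w\|_{L^2})$ from Corollary \ref{C1}; since $\|\nabla^2 w\|_{L^2}$ is only in $L^2_t$ by Lemma \ref{L31}, this produces a factor in $L^{8/3}_t$, which is still integrable, so Gronwall applies. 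After collecting all these estimates into an inequality of the form $\tfrac{d}{dt}\|w_t\|_{L^2}^2 \le C(1+g(t))(1+\|w_t\|_{L^2}^2)$ with $g\in L^1(0,T)$, Gronwall yields $\|w_t\|_{L^\infty(0,T;L^2(D))}\le C$.

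With $\|w_t\|_{L^2(D)}\le C$ in hand, I would view the $w$-equation as an elliptic problem $-\gamma\Delta w = 2\kappa\nabla\times u - 4\kappa w - u\cdot\nabla w - w_t$ with $w|_{\p D}=0$ and apply Lemma \ref{elliptic} with $m=0$, $p=2$: the right-hand side is bounded in $L^2(D)$ since $\nabla\times u=\Omega\in L^\infty(0,T;L^2)$ by Lemma \ref{L31}, $\|u\cdot\nabla w\|_{L^2}\le \|u\|_{L^\infty}\|\nabla w\|_{L^2}\le C$, and $\|w_t\|_{L^2}\le C$; hence $\|w\|_{H^2(D)}\le C$ uniformly in $t$. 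Finally, for $\|u_t\|_{L^2(D)}$, I read off from the $u$-equation $\|u_t\|_{L^2}\le \|u\cdot\nabla u\|_{L^2}+\|\nabla\pi\|_{L^2}+2\kappa\|\nabla w\|_{L^2}$; the convective term is bounded by $\|u\|_{L^\infty}\|\nabla u\|_{L^2}\le C$, the pressure gradient satisfies the same bound because $\pi$ solves a Neumann problem with data $\nabla\cdot(2\kappa\nabla^\perp w - u\cdot\nabla u)$ (which equals $-\nabla\cdot(u\cdot\nabla u)$, bounded in $H^{-1}$), and $\|\nabla w\|_{L^2}\le C$. This gives $\|u_t\|_{L^\infty(0,T;L^2(D))}\le C$, completing the proof.

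The main obstacle I anticipate is the bootstrapping in the $w_t$-estimate: because the $u$-equation carries no dissipation, every occurrence of $u_t$ must be replaced by lower-order quantities via the momentum equation, and one must be careful that the resulting terms involving $\|\nabla^2 w\|_{L^2}$ (which is only $L^2$ in time, not $L^\infty$) are paired with high enough powers of the dissipation $\|\nabla w_t\|_{L^2}^2$ to be absorbed, leaving only time-integrable coefficients for Gronwall. Tracking the exact Sobolev exponents so that every borderline term lands in $L^1_t$ is the delicate bookkeeping at the heart of this proposition.
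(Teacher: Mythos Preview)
Your approach is correct but takes a genuinely different route from the paper. The paper reverses your order: it \emph{first} bounds $\|u_t\|_{L^2}$ by testing the $u$-equation against $u_t$ itself (the pressure drops for free since $u_t$ is divergence-free with $u_t\cdot\bm n|_{\partial D}=0$), obtaining $\|u_t\|_{L^2}^2\le C\|u\|_{L^4}^2\|\nabla u\|_{L^4}^2+C\|\nabla w\|_{L^2}^2\le C$ via Proposition~\ref{w-w2p}. With $\|u_t\|_{L^2}\le C$ already in hand, the paper then estimates $\|w_t\|_{L^2}$: the term $\int_D u_t\cdot\nabla w\,w_t$ is integrated by parts to $\int_D w\,u_t\cdot\nabla w_t$ and bounded by $\tfrac\gamma4\|\nabla w_t\|^2+C\|u_t\|_{L^2}^2\|w\|_{H^2}^2$, while for the curl term the paper invokes the \emph{vorticity equation} $\Omega_t=-u\cdot\nabla\Omega-2\kappa\Delta w$ (rather than your momentum-equation substitution) and integrates by parts to get $2\kappa\int_D u\cdot\nabla w_t\,\Omega+4\kappa^2\int_D\nabla w\cdot\nabla w_t$, absorbable using only $\|\Omega\|_{L^4}\le C$. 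Finally $\|w\|_{H^2}$ follows from elliptic regularity exactly as you propose.

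The paper's route is cleaner for two reasons: testing against $u_t$ eliminates the pressure without needing your separate Neumann estimate, and using the vorticity equation for $\Omega_t$ avoids the substitution altogether. Your route does work, but note that when you substitute $u_t=-u\cdot\nabla u-\nabla\pi+2\kappa\nabla^\perp w$ into the \emph{second} integral $\int_D(u_t\cdot\nabla w)w_t$, the pressure contribution $\int_D(\nabla\pi\cdot\nabla w)w_t$ does \emph{not} vanish (unlike in the first integral, where $\nabla^\perp w_t$ is divergence-free and tangential); you glossed over this piece, though it is indeed harmless once you have $\|\nabla\pi\|_{L^2}\le C$ from the Leray projection. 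Your approach buys a self-contained $w_t$-estimate at the cost of more bookkeeping; the paper's buys simplicity by noticing that $\|u_t\|_{L^2}$ can be read off immediately, pressure-free.
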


\begin{proof}
We first estimate $\|u_t\|_{L^2(D)}$. Differentiating the equation of $u$ in $(\ref{eq1})$ with respect to $t$ and then dotting it with $u_t$, we have
\begin{equation*}\begin{split}
\|u_t\|_{L^2(D)}^2&=-\int_{D} u\cdot\nabla u\cdot u_t\,dx
\,+\,2\kappa \int_{D}  \nabla^{\bot}w\cdot u_t\, dx
\\
&\leq
\frac12\|u_t\|_{L^2(D)}^2+\, C\,\|u\cdot\nabla u\|_{L^2(D)}^2+\kappa\|\nabla^{\bot}w\|_{L^2(D)}^2\\&\leq
\frac12\|u_t\|_{L^2(D)}^2+\, C\,\|u\|_{L^4(D)}^2\|\nabla u\|_{L^4(D)}^2+ \kappa\|\nabla w\|_{L^2(D)}^2\\
&\leq\frac12\|u_t\|_{L^2(D)}^2+\,C\,\|u\|_{H^1(D)}^2(\|u\|_{L^2(D)}^2+\|\Omega\|_{L^4(D)}^2)+\kappa\|\nabla w\|_{L^2(D)}^2.
\end{split}
\end{equation*}
The global bounds in Lemma \ref{L31} and Proposition \ref{w-w2p} then imply
\begin{equation}\label{ut-est}
\|u_t\|_{L^\infty(0, T; L^2(D))}\leq C.
\end{equation}
To estimate $\|w_t\|_{L^2(D)}$, we take the temporal derivative of the $w$-equation in (\ref{eq1}) to get
\begin{equation}\label{wt-equ}
w_{tt}+u\cdot\nabla w_t+u_t\cdot\nabla w+4\kappa w_t=\gamma \Delta w_t+2\kappa\Omega_t.
\end{equation}
Multiplying \eqref{wt-equ} with $w_t$ and integrating on $D$, it follows that
\begin{equation}\label{wt-est}\begin{split}
&\frac12\frac{d}{dt}\|w_t\|_{L^2(D)}^2+\gamma\|\nabla w_t\|_{L^2(D)}^2+4\kappa\|w_t\|_{L^2(D)}^2\\
=&-\int_{D}  w_tu_t\cdot\nabla w \,dx+2\kappa\int_{D}  \Omega_tw_t \,dx.
\end{split}
\end{equation}
By integration by parts, H\"{o}lder's inequality and Young's inequality,
\begin{equation*}\begin{split}
-\int_{D}  w_tu_t\cdot\nabla w dx&=\int_{D}  wu_t\cdot\nabla w_t \,dx\\&\leq
\frac\gamma4\|\nabla w_t\|_{L^2(D)}^2+\frac C\gamma \|u_t\|_{L^2(D)}^2\|w\|_{L^\infty(D)}^2\\&\leq
\frac\gamma4\|\nabla w_t\|_{L^2(D)}^2+\frac C\gamma \|u_t\|_{L^2(D)}^2\|w\|_{H^2(D)}^2.
\end{split}
\end{equation*}
To estimate the second term in the right of \eqref{wt-est}, we make use of the vorticity equation \eqref{vv} and integrate by parts on $D$ to get
\begin{equation*}\begin{split}
2\kappa\int_{D}\Omega_tw_t dx&=-2\kappa\int_{D}u\cdot\nabla \Omega w_t dx-4{\kappa}^2\int_{D}\Delta w w_t dx\\
&=2\kappa\int_{D}  u\cdot\nabla w_t\Omega dx+4{\kappa}^2\int_{D}  \nabla w \cdot\nabla w_t dx,
\end{split}
\end{equation*}
which yields,
\begin{equation*}
2\kappa\int_{D}  \Omega_tw_t dx\leq\frac\gamma4\|\nabla w_t\|_{L^2(D)}^2+\frac {8\kappa^2}\gamma\|\Omega\|_{L^4(D)}^2\|u\|_{H^1(D)}^2+\frac{32\kappa^4}{\gamma}\|\nabla w\|_{L^2(D)}^2.
\end{equation*}
Combining the two estimates above with \eqref{wt-est}, applying \eqref{ut-est}, and invoking Lemma \ref{L31} and Proposition \ref{w-w2p}, we conclude
\begin{equation*}
\|w_t\|_{L^\infty(0,T; L^2(D))}\leq C.
\end{equation*}
By Lemma \ref{elliptic},
\begin{equation}\label{w-H2}\begin{split}
\|w\|_{H^2(D)}&\leq C\left(\|w_t\|_{L^2(D)}+\|u\cdot\nabla w\|_{L^2(D)}+\|w\|_{L^2(D)}+\|\Omega\|_{L^2(D)}\right).
\end{split}
\end{equation}
By H\"{o}lder's inequality, Sobolev embedding inequality and Lemma \ref{curl}, we have
\begin{equation*}\begin{split}
\|u\cdot\nabla w\|_{L^2(D)}&\leq\|u\|_{L^\infty(D)}\|\nabla w\|_{L^2(D)}\\&\leq C\|u\|_{W^{1,p}(D)}\|\nabla w\|_{L^2(D)}\\&\leq C(\|\Omega\|_{L^p(D)}+\|u\|_{H^1(D)})\|\nabla w\|_{L^2(D)},
\end{split}
\end{equation*}
where $2\leq p<\infty$. Together with Lemma \ref{L31} and Proposition \ref{w-w2p}, we obtain the desired global bound. This completes the proof of Proposition \ref{w-H2-est}.
\end{proof}

\vskip .1in
The global bound for $\|\Omega\|_{L^\infty(D)}$ follows as an easy consequence of Proposition \ref{w-H2-est}.

\begin{proposition}\label{omegainfty}
	Assume that $(u_0, w_0)$ satisfies the conditions stated in Theorem \ref{T1}.  Let $(u,w)$ be the global weak solution obtained in Proposition
	\ref{weak2}. Then for any $T>0$ and $0<t\le T$,
	\begin{equation*}
		\|\Omega\|_{L^\infty(0,T; L^\infty(D))}\leq C,
	\end{equation*}
	where the constant $C$ depends only on $D,T$ and the initial data.
\end{proposition}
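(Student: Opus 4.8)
The plan is to obtain this bound essentially \emph{for free} from the two global estimates already in hand, via the algebraic decomposition $\Omega = Z - \f{2\kappa}{\gamma}\,w$; no genuinely new estimate is needed. First I would record the $L^\infty$ bound on $w$: Proposition \ref{w-H2-est} gives $\|w\|_{L^\infty(0,T;H^2(D))}\le C$, and since $D\subset\R^2$, the Sobolev embedding $H^2(D)\hookrightarrow L^\infty(D)$ (Corollary \ref{C1}(3)) upgrades this at once to $\|w\|_{L^\infty(0,T;L^\infty(D))}\le C$.

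Next I would revisit the $L^p$-estimate for $Z$ carried out in the proof of Proposition \ref{w-w2p}: multiplying \eqref{Z-equ} by $|Z|^{p-2}Z$, integrating over $D$, and using $\nabla\cdot u=0$ together with $u\cdot\bm{n}|_{\p D}=0$ to annihilate the transport term, Gronwall's inequality yields
$$
\|Z(t)\|_{L^p(D)}\le e^{\frac{4\kappa^2}{\gamma}t}\Big(\|Z_0\|_{L^p(D)}+C\int_0^t\|w(\tau)\|_{L^p(D)}\,d\tau\Big),\qquad 2\le p<\infty,
$$
with $C$ independent of $p$. Letting $p\to\infty$ and feeding in the bound on $\|w\|_{L^\infty(0,T;L^\infty(D))}$ just obtained, together with $Z_0=\Omega_0+\frac{2\kappa}{\gamma}w_0\in L^\infty(D)$, one arrives at $\|Z\|_{L^\infty(0,T;L^\infty(D))}\le C$. (This is essentially already contained in the proof of Proposition \ref{w-w2p}, so one may simply quote it rather than repeat it.)

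Combining the two bounds through the triangle inequality $\|\Omega\|_{L^\infty(0,T;L^\infty(D))}\le \|Z\|_{L^\infty(0,T;L^\infty(D))}+\frac{2\kappa}{\gamma}\|w\|_{L^\infty(0,T;L^\infty(D))}$ then finishes the proof. I do not expect any real obstacle here: the substantive work --- isolating the combined quantity $Z$ that diagonalizes away the bad term $-2\kappa\Delta w$ in the vorticity equation, and establishing the $H^2$ bound on $w$ --- has already been done in Propositions \ref{w-w2p} and \ref{w-H2-est}. The only point that warrants a line of care is the justification of the $L^p$ energy identity for $Z$ when $u$ is merely of Yudovich class, but this is routine: the transport term vanishes by the divergence-free and tangency conditions exactly as in the earlier proposition.
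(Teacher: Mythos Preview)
Your proposal is correct and follows essentially the same route as the paper: quote the $L^\infty$ bound on $Z$ already established in the proof of Proposition~\ref{w-w2p} (equation~\eqref{Z--infty}), combine it with the $H^2\hookrightarrow L^\infty$ embedding applied to the bound $\|w\|_{L^\infty(0,T;H^2(D))}\le C$ from Proposition~\ref{w-H2-est}, and conclude via $\Omega = Z - \tfrac{2\kappa}{\gamma}w$. The only cosmetic difference is that the paper bounds $\int_0^t\|w\|_{L^\infty(D)}\,d\tau$ by $\int_0^t\|w\|_{H^2(D)}\,d\tau$ and invokes the $L^2_tH^2_x$ bound from Lemma~\ref{L31}, whereas you use the stronger $L^\infty_tH^2_x$ bound from Proposition~\ref{w-H2-est}; either works.
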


\begin{proof} According to \eqref{Z--infty},
	\begin{equation}\notag
		\|Z\|_{L^\infty(0,T; L^\infty(D))}\leq C,
	\end{equation}
which implies, by the definition of $Z$, Sobolev's embedding and Proposition \ref{w-H2-est},
	\begin{equation*}\begin{split}
			\|\Omega\|_{L^\infty(0,T; L^\infty(D))}&\leq (\|Z\|_{L^\infty(0,T; L^\infty(D))}+\|w\|_{L^\infty(0,T; L^\infty(D))})\\&\leq C(\|Z\|_{L^\infty(0,T; L^\infty(D))}+\|w\|_{L^\infty(0,T; H^2(D))})\\&\leq C.
		\end{split}
	\end{equation*}
	This completes the proof of Proposition \ref{omegainfty}.
\end{proof}

\vskip .1in
We are now ready to prove Theorem \ref{T1}.

\begin{proof}[Proof of Theorem \ref{T1}]
 In view of Propositions \ref{weak2}, \ref{w-w2p} and \ref{w-H2-est}, it suffices to prove the uniqueness. We employ the method of Yudovich.

 \vskip .1in
 Assume $(u,w,\pi)$ and $(\widetilde{u},\widetilde{w},\widetilde{\pi})$  are two solutions of \eqref{eq1}-\eqref{eq20} with the regularity specified in (\ref{regclass}). Consider their difference
 $$U=u-\widetilde{u},\,\,W=w-\widetilde{w},\,\,\Pi=\pi-\widetilde{\pi},$$ which solves the following initial- and boundary-value problem
\begin{equation}\label{uni}
\left\{\begin{array}{ll}
U_t+u\cdot\nabla U+U\cdot\nabla \widetilde{u}+\nabla\Pi=2\kappa{\nabla^{\perp}}W,\\
W_t+u\cdot\nabla W+U\cdot\nabla \widetilde{w}-\gamma\Delta W+4\kappa W=2\kappa \nabla\times U,\\
\nabla\cdot U=0,\\
U\cdot{\bm n}|_{\p\Omega}=W|_{\p\Omega}=0,\\
(U,W)(x,0)=0.
\end{array}\right.
\end{equation}
Dotting the first two equations with $(U,W)$ yields
\ben\label{uni-est}
&&\f12\f{d}{dt}(\|U\|_{L^2(D)}^2+\|W\|_{L^2(D)}^2)+\gamma\|\nabla W\|_{L^2(D)}^2+4\kappa\|W\|_{L^2(D)}^2\notag\\
&=&-\int_{D}  U\cdot\nabla \widetilde{u}\cdot Udx-\int_{D}  U\cdot\nabla \widetilde{w}\cdot Wdx\\
&&+2\kappa\int_{D}\nabla^{\perp}W\cdot Udx+2\kappa\int_{D}\nabla\times U\cdot Wdx.\notag
\een
By the divergence theorem and the boundary condition $W|_{\p\Omega}=0$,
\beno
&&2\kappa\, \int_{D}\nabla^{\perp}W\cdot Udx+2\kappa\int_{D}\nabla\times U\cdot Wdx\\
&=&4\kappa \,\int_{D}\nabla^{\perp}W\cdot U\,dx \le \frac{\gamma}{2}\,\|\nabla W\|_{L^2(D)}^2 + C\, \|U\|_{L^2(D)}^2.
\eeno
Since $\nabla \widetilde{u}$ is not known to be bounded
in $L^\infty$ while the corresponding vorticity $\widetilde{\Omega}$ is, we apply the Yudovich
approach to bound the first term on the right of (\ref{uni-est}).
Since $\widetilde{\Omega}\in  L^\infty(0,T; L^\infty(D))$, we have, by Lemma \ref{curl},
$$
\|\nabla \widetilde{u}\|_{L^q(D)} \le C\, q\, \left(\|\widetilde{\Omega}\|_{L^q(D)} + \|\widetilde{u}\|_{L^q(D)}\right)
\le C\, q\,\left(\|\widetilde{\Omega}\|_{L^\infty(D)} + \|\widetilde{\Omega}\|_{L^2(D)} + \|\widetilde{u}\|_{L^2(D)}\right).
$$
Therefore,
$$
L\equiv \sup_{q\ge 2} \frac{\|\nabla \widetilde{u}\|_{L^q(D)}}q <\infty.
$$
In addition, by Lemma \ref{P1},
\beno
M\equiv \|U\|_{L^\infty(D)} &\le& C\, \left(\|\nabla U\|^{\frac23}_{L^4(D)}\,\|U\|_{L^2(D)}^{\frac13} + \|U\|_{L^2(D)}\right)\\
&\le& C\,\left(\|\nabla u\|_{L^4(D)} + \|\nabla \widetilde{u}\|_{L^4(D)} + \|u\|_{L^2(D)} + \|\widetilde{u}\|_{L^2(D)}\right) <\infty.
\eeno
Therefore, by H\"{o}lder's inequality, for any $2\le q<\infty$,
$$
\left|\int_{D}  U\cdot\nabla \widetilde{u}\cdot U\,dx\right| \le C\, q\,L\, M^{\frac2q}\, \left(\|U\|_{L^2(D)}^2 + \delta\right)^{1-\frac1q},
$$
where $\delta>0$ is inserted here to justify some of the
later steps. By optimizing the bound above in terms of $q$, we obtain
$$
\left|\int_{D}  U\cdot\nabla \widetilde{u}\cdot U\,dx\right| \le C\,e\,
L\, (\|U\|_{L^2(D)}^2 + \delta)\, \ln\frac{M^2}{\|U\|_{L^2(D)}^2 + \delta}.
$$
To bound the second term on the right of (\ref{uni-est}), we
integrate by parts and invoke the boundary condition $W|_{\p\Omega}=0$
to obtain
\begin{equation*}\begin{split}
-\int_{D}  U\cdot\nabla \widetilde{w}\cdot Wdx &= \int_D \widetilde{w}\,U\cdot \nabla W\,dx \\
&\leq\| \widetilde{w}\|_{L^{\infty}(D)}\, \|U\|_{L^2(D)}\,\|\nabla W\|_{L^2(D)}\\
&\leq \frac{\gamma}{2}\, \|\nabla W\|_{L^2(D)}^2 + C\,
\|\widetilde{w}\|^2_{H^2(D)}\, \|U\|^2_{L^2(D)}.
\end{split}
\end{equation*}
Inserting the estimates above in \eqref{uni-est} yields
\beno
\frac{d}{dt}\left((\|U\|_{L^2(D)}^2 + \delta)+\|W\|_{L^2(D)}^2\right)
&\leq& C\,(1+ \|\widetilde{w}\|^2_{H^2(D)})\,(\|U\|_{L^2(D)}^2 + \delta)
\\
&& +  C\,
L\, (\|U\|_{L^2}^2 + \delta)\, \ln\frac{M^2}{\|U\|_{L^2(D)}^2 + \delta}.
\eeno
By Osgood's inequality, we obtain
\beno
&& (\|U(t)\|_{L^2(D)}^2 + \delta)+\|W(t)\|_{L^2(D)}^2 \\
&\le& e^{C\, \int_0^t(1+ \|\widetilde{w}\|^2_{H^2(D)})\,d\tau}
\left(\|U_0\|_{L^2(D)}^2 + \delta)+\|W_0\|_{L^2(D)}^2\right)^{e^{-C\,L\,t}}\\
&& \times \, e^{\int_0^t CL\, e^{-C\,L\,(t-\tau)}\,\ln\left(M^2 \exp(C\, \int_0^\tau(1+ \|\widetilde{w}\|^2_{H^2(D)})\,ds\right)\,d\tau}
\eeno
for any $t\in (0,T)$. Letting $\delta\to 0$ and noting that $U_0=W_0=0$,
we obtain the desired uniqueness $U=W\equiv 0$. This finishes the proof of Theorem \ref{T1}.
\end{proof}

\vskip .3in
\section{Higher regularity and proof of Theorem \ref{T2}}
\label{higher}
\setcounter{equation}{0}

\vskip .1in
The section proves Theorem \ref{T2}, the higher regularity of
$(u,w)$. These higher regularity bounds are achieved through
several steps. The first step is to prove the global bound
$$
\nabla u \in L^\infty(0, T; L^\infty(D)) \quad\mbox{and}\quad
\nabla \Omega \in L^\infty(0, T; L^p(D))\quad\mbox{with}\,\,2\le p<\infty
$$
via the equation of the combined quantity $Z$. The second step
is to show the global bound for
$$
w\in L^\infty(0, T; W^{2,p}(D))\quad \mbox{with}\,\,p\in [2, \infty) \quad\mbox{and}\quad
w\in L^\infty(0, T; H^3(D)).
$$
Finally we prove $w\in L^\infty(0, T; H^4(D))$. To do so,
we estimate $\|w_{tt}\|_{L^2(D)}$ and $\Omega\in L^\infty(0, T; H^2(D))$, and invoke the regularization estimates for elliptic equations (Lemma
\ref{elliptic}).

\vskip .1in
We start with the first step.
\begin{proposition}\label{nabla u-est}
Assume that $(u_0, w_0)$ satisfies the conditions in Theorem \ref{T2}.
Let $(u,w)$ be the corresponding solution of \eqref{eq1}-\eqref{eq20} guaranteed by Proposition \ref{weak2}. Then,  for any $T>0$ and $0<t\le T$, and for any $2\leq p<\infty$,
\begin{equation*}
\|\nabla u\|_{L^\infty(0,T; L^\infty(D))}+\|\nabla \Omega\|_{L^\infty(0,T; L^p(D))}\leq C.
\end{equation*}
where the constant $C$ depends only on $D, T$ and the initial data.	
\end{proposition}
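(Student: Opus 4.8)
The plan is to derive a closed logarithmic Gr\"onwall inequality for $\|\nabla Z\|_{L^p(D)}$, where $Z=\Omega+\frac{2\kappa}{\gamma}w$ is the combined quantity obeying \eqref{Z-equ}. First I would apply $\nabla$ to \eqref{Z-equ} to get
\[
\partial_t\nabla Z+u\cdot\nabla\nabla Z+\nabla u\cdot\nabla Z-\tfrac{4\kappa^2}{\gamma}\nabla Z+\tfrac{8\kappa^2}{\gamma}\bigl(1+\tfrac{\kappa}{\gamma}\bigr)\nabla w=0,
\]
then multiply by $|\nabla Z|^{p-2}\nabla Z$ and integrate over $D$. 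Because $\nabla\cdot u=0$ and $u\cdot\bm n|_{\partial D}=0$, the transport term drops out; estimating the remaining terms by H\"older's inequality and dividing by $\|\nabla Z\|_{L^p(D)}^{p-1}$ yields
\[
\frac{d}{dt}\|\nabla Z\|_{L^p(D)}\le\Bigl(\|\nabla u\|_{L^\infty(D)}+\tfrac{4\kappa^2}{\gamma}\Bigr)\|\nabla Z\|_{L^p(D)}+C\|\nabla w\|_{L^p(D)}.
\]

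The next step is to absorb the two quantities on the right that are not yet under control. For $\|\nabla w\|_{L^p(D)}$ I would use the global bound $\|w\|_{L^\infty(0,T;H^2(D))}\le C$ from Proposition \ref{w-H2-est} together with the two-dimensional embedding $H^2(D)\hookrightarrow W^{1,p}(D)$, so that $\|\nabla w\|_{L^\infty(0,T;L^p(D))}\le C$ for every $p<\infty$. For $\|\nabla u\|_{L^\infty(D)}$ I would invoke the logarithmic interpolation inequality of Lemma \ref{log-inequ},
\[
\|\nabla u\|_{L^\infty(D)}\le C\,\|\Omega\|_{L^\infty(D)}\bigl(1+\log(e+\|\Omega\|_{C^\alpha(D)})\bigr),
\]
choosing $\alpha\in(0,1)$ and then $p$ large enough that $W^{1,p}(D)\hookrightarrow C^\alpha(\overline D)$, and using $\nabla\Omega=\nabla Z-\frac{2\kappa}{\gamma}\nabla w$ to bound $\|\Omega\|_{C^\alpha(D)}\le C\|\Omega\|_{W^{1,p}(D)}\le C(\|\nabla Z\|_{L^p(D)}+C)$. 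Combining this with the already established bounds $\|\Omega\|_{L^\infty(0,T;L^\infty(D))}\le C$ (Proposition \ref{omegainfty}) and $\|w\|_{L^\infty(0,T;H^2(D))}\le C$, the differential inequality collapses to
\[
\frac{d}{dt}\|\nabla Z\|_{L^p(D)}\le C\bigl(1+\log(e+\|\nabla Z\|_{L^p(D)})\bigr)\bigl(1+\|\nabla Z\|_{L^p(D)}\bigr)+C.
\]

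An Osgood / logarithmic Gr\"onwall argument then keeps $\|\nabla Z\|_{L^p(D)}$ finite on $[0,T]$, and since the hypotheses $u_0\in H^3(D)$ and $w_0\in H^4(D)$ give $\nabla Z_0=\nabla\Omega_0+\frac{2\kappa}{\gamma}\nabla w_0\in L^p(D)$, we obtain $\|\nabla Z\|_{L^\infty(0,T;L^p(D))}\le C$. Consequently $\|\nabla\Omega\|_{L^\infty(0,T;L^p(D))}\le\|\nabla Z\|_{L^\infty(0,T;L^p(D))}+\frac{2\kappa}{\gamma}\|\nabla w\|_{L^\infty(0,T;L^p(D))}\le C$, first for the large $p$ used above and then for every $2\le p<\infty$ because $D$ is bounded; feeding this back into Lemma \ref{log-inequ} gives $\|\nabla u\|_{L^\infty(0,T;L^\infty(D))}\le C$.

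The main obstacle is the vortex-stretching-type term $\nabla u\cdot\nabla Z$: it is precisely this term that forces $\|\nabla u\|_{L^\infty(D)}$, rather than a lower-order norm, into the estimate, so the whole scheme hinges on coupling the $L^p$-estimate for $\nabla Z$ with the logarithmic Sobolev control of $\|\nabla u\|_{L^\infty(D)}$ in terms of $\|\Omega\|_{C^\alpha(D)}$ and on closing the resulting nonlinear inequality. Some care is also needed because the embedding constant in $W^{1,p}(D)\hookrightarrow C^\alpha(\overline D)$ degenerates as $p\downarrow 2/(1-\alpha)$, so it should be used only for one fixed admissible (large) $p$, with the remaining values of $p$ recovered afterwards from the boundedness of $D$.
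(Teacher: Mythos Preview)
Your proposal is correct and follows essentially the same route as the paper: differentiate the $Z$-equation, run an $L^p$ energy estimate to obtain $\tfrac{d}{dt}\|\nabla Z\|_{L^p}\le C(1+\|\nabla u\|_{L^\infty})\|\nabla Z\|_{L^p}+C\|\nabla w\|_{L^p}$, replace $\|\nabla u\|_{L^\infty}$ by the logarithmic bound from Lemma \ref{log-inequ} via $W^{1,p}\hookrightarrow C^\alpha$ and $\|\Omega\|_{W^{1,p}}\le C(\|\nabla Z\|_{L^p}+1)$, and close with a logarithmic Gr\"onwall argument. The only cosmetic difference is that the paper works directly with any fixed $p>2$ (since $W^{1,p}(D)\hookrightarrow C^{1-2/p}(D)$ in two dimensions) rather than first fixing $\alpha$ and then choosing $p$ large, but the two formulations are equivalent and your caution about the embedding constant is well placed.
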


\begin{proof}
Taking the first-order partial $\partial_i$ of \eqref{Z-equ} yields
\begin{equation}\label{de-Z-equ}
\partial_t\partial_iZ+u\cdot\nabla \partial_iZ+\partial_iu\cdot\nabla Z-\frac{4\kappa^2}{\gamma}\partial_iZ+
\frac{8\kappa^2}{\gamma}(1+\frac\kappa\gamma)\partial_iw=0.
\end{equation}
Multiplying \eqref{de-Z-equ} by $|\partial_iZ|^{p-2}\partial_iZ$, summing over $i$ and integrating on $D$, we have
\begin{equation*}\begin{split}
\frac1p\frac{d}{dt}\|\nabla Z\|_{L^p(D)}^p&\leq\|\nabla u\|_{L^\infty(D)}\|\nabla Z\|_{L^p(D)}^p+\frac{4\kappa^2}{\gamma}\|\nabla Z\|_{L^p(D)}^p\\
&\quad+\frac{8\kappa^2}{\gamma}(1+\frac\kappa\gamma)
\|\nabla w\|_{L^p(D)}\|\nabla Z\|_{L^p(D)}^{p-1},
\end{split}
\end{equation*}
which also implies
\begin{equation}\label{de-Z-lp}
\frac{d}{dt}\|\nabla Z\|_{L^p(D)}\leq C\left(\frac{\kappa^2}{\gamma}+\|\nabla u\|_{L^\infty(D)}\right)\|\nabla Z\|_{L^p(D)}+\frac{8\kappa^2}{\gamma}\left(1+\frac\kappa\gamma\right)
\|\nabla w\|_{L^p(D)}.
\end{equation}
By Lemma \ref{log-inequ} and the Sobolev embedding $W^{1,p}(D)\hookrightarrow C^{\alpha}(D)$ for $p>2$,
\begin{equation*}
\begin{split}
\|\nabla u\|_{L^\infty(D)}\leq& C\, \|\Omega\|_{L^\infty(D)}\log(e+\|\Omega\|_{W^{1,p}(D)}),
\end{split}
\end{equation*}
which, together with the definition of $Z$, yields
\begin{equation}\label{log}\begin{split}
\|\nabla u\|_{L^\infty(D)}
&\leq C\|\Omega\|_{L^\infty(D)}\log(e+\|\nabla Z\|_{L^{p}(D)}+\|Z\|_{L^{p}(D)}+\frac{2\kappa}{\gamma}\|w\|_{W^{1,p}(D)}).
\end{split}
\end{equation}
Inserting (\ref{log}) in (\ref{de-Z-lp}) yields
\begin{equation*}
\frac{d}{dt}\|\nabla Z\|_{L^p(D)}\leq C(1+\log(e+\|\nabla Z\|_{L^{p}(D)})) \,\|\nabla Z\|_{L^p(D)} + C,
\end{equation*}
where we have invoked the regularity bounds for $(u, w)$ from the
previous section. We obtain, via Gronwall's inequality, the global bound
\begin{equation}\label{NABLAZ-Lp}
\|\nabla Z\|_{L^\infty(0,T;L^p(D))}\leq C.
\end{equation}
By the definition of $Z$, for any $2<p<\infty$,
\ben\label{nablaomega}
\|\nabla \Omega\|_{L^\infty(0,T;L^p(D))}\leq \|\nabla Z\|_{L^\infty(0,T;L^p(D))}+\frac{2\kappa}{\gamma}\|\nabla w\|_{L^\infty(0,T;L^p(D))} \le C,
\een
which, together with \eqref{log}, implies
\begin{equation*}
\|\nabla u\|_{L^\infty(0,T;L^\infty(D))}\leq C.
\end{equation*}
By the way, the bound of $\|\nabla \Omega\|_{L^\infty(0,T;L^2(D))}$ can be inferred by \eqref{nablaomega}, H\"{o}lder's inequality and the boundedness of domain $D$ directly. This completes the proof of Proposition \ref{nabla u-est}.
\end{proof}

\vskip .1in
Our next goal is to show the global bound for $\|w\|_{W^{2,p}(D)}$ and
$\|w\|_{H^3(D)}$. To avoid the boundary effects, we make use of the
estimates of time derivatives and the regularization bounds in Lemma \ref{elliptic}.

\begin{proposition}\label{w-H3-est}
Assume that $(u_0, w_0)$ satisfies the conditions in Theorem \ref{T2}.
Let $(u,w)$ be the corresponding solution of \eqref{eq1}-\eqref{eq20} guaranteed by Proposition \ref{weak2}. Then,  for any $T>0$ and $0<t\le T$, and for any $2\leq p<\infty$,
\begin{equation}
\|w\|_{L^\infty(0,T; \,W^{2,p}(D))}+\|w\|_{L^\infty(0,T;\, H^3(D))}\leq C,
\end{equation}
where the constant $C$ depends only on $D, T, p$ and the initial data.
\end{proposition}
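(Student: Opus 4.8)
The plan is to extract the claimed bounds from the parabolic $w$-equation, feeding in the regularity already in hand: from Theorem~\ref{T1} we have $\Omega\in L^\infty(0,T;L^\infty(D))$ and $w\in L^\infty(0,T;H^2(D))$, hence (Corollary~\ref{C1}) $w\in L^\infty(0,T;L^\infty(D))$ and (Lemma~\ref{curl}) $u\in L^\infty(0,T;L^\infty(D))$; from Proposition~\ref{nabla u-est} we have $\nabla u\in L^\infty(0,T;L^\infty(D))$ and $\nabla\Omega\in L^\infty(0,T;L^2(D))$; from Proposition~\ref{w-H2-est} we have $u_t,\,w_t\in L^\infty(0,T;L^2(D))$. \emph{Step~1 (space-time $L^q$ bound on $\Delta w$).} Write the second equation of \eqref{eq1} as $w_t-\gamma\Delta w=g$ with $w|_{\partial D}=0$, $w|_{t=0}=w_0$, where $g:=2\kappa\,\Omega-4\kappa w-u\cdot\nabla w$. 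Since $\Omega$, $w$, $u$ are in $L^\infty(0,T;L^\infty(D))$ and $\nabla w\in L^\infty(0,T;L^q(D))$ for all $q<\infty$ (2D embedding $H^1(D)\hookrightarrow L^q(D)$), we get $g\in L^q(0,T;L^q(D))$; since $w_0\in H^4(D)\subset W_0^{1,q}(D)$, Lemma~\ref{heat} gives $\|w_t\|_{L^q(0,T;L^q(D))}+\|\Delta w\|_{L^q(0,T;L^q(D))}\le C$ for every $q\in[2,\infty)$. Fixing any $q>2$ and applying Lemma~\ref{elliptic} with $m=0$, this upgrades to $w\in L^q(0,T;W^{2,q}(D))\hookrightarrow L^q(0,T;W^{1,\infty}(D))$, so in particular $\|\nabla w\|_{L^\infty(D)}\in L^q(0,T)\subset L^2(0,T)$.

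\emph{Step~2 ($\nabla w_t\in L^\infty(0,T;L^2(D))$).} Differentiating the $w$-equation in time gives \eqref{wt-equ}. Multiplying \eqref{wt-equ} by $-\Delta w_t$, integrating over $D$ and integrating by parts (legitimate since $w_t|_{\partial D}=w_{tt}|_{\partial D}=0$), we obtain
\begin{equation*}
\frac12\frac{d}{dt}\|\nabla w_t\|_{L^2(D)}^2+\gamma\|\Delta w_t\|_{L^2(D)}^2+4\kappa\|\nabla w_t\|_{L^2(D)}^2=\int_D(u\cdot\nabla w_t)\Delta w_t\,dx+\int_D(u_t\cdot\nabla w)\Delta w_t\,dx-2\kappa\int_D\Omega_t\,\Delta w_t\,dx.
\end{equation*}
In the last term I substitute $\Omega_t=-u\cdot\nabla\Omega-2\kappa\Delta w$ from \eqref{vv}. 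Using $u\in L^\infty(0,T;L^\infty(D))$, $u_t\in L^\infty(0,T;L^2(D))$, $\nabla\Omega,\,\Delta w\in L^\infty(0,T;L^2(D))$, together with Young's inequality to absorb each factor $\|\Delta w_t\|_{L^2(D)}$ into $\gamma\|\Delta w_t\|_{L^2(D)}^2$, I arrive at
\begin{equation*}
\frac{d}{dt}\|\nabla w_t\|_{L^2(D)}^2+\gamma\|\Delta w_t\|_{L^2(D)}^2\le C\,\|\nabla w_t\|_{L^2(D)}^2+C\,(1+\|\nabla w\|_{L^\infty(D)}^2).
\end{equation*}
Since $\|\nabla w\|_{L^\infty(D)}^2\in L^1(0,T)$ by Step~1, and $w_t(\cdot,0)=\gamma\Delta w_0-4\kappa w_0-u_0\cdot\nabla w_0+2\kappa\,\Omega_0\in H_0^1(D)$ by the compatibility conditions \eqref{eq3} together with $u_0\in H^3(D)$, $w_0\in H^4(D)$, Gronwall's inequality yields $\|\nabla w_t\|_{L^\infty(0,T;L^2(D))}\le C$; combined with Proposition~\ref{w-H2-est} this gives $w_t\in L^\infty(0,T;H^1(D))\hookrightarrow L^\infty(0,T;L^p(D))$ for every $p<\infty$.

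\emph{Step~3 (elliptic regularization).} Regard $-\gamma\Delta w=2\kappa\,\Omega-4\kappa w-u\cdot\nabla w-w_t$ with $w|_{\partial D}=0$ as an elliptic boundary-value problem. Writing $\|\nabla(u\cdot\nabla w)\|_{L^2(D)}\le\|\nabla u\|_{L^\infty(D)}\|\nabla w\|_{L^2(D)}+\|u\|_{L^\infty(D)}\|\nabla^2 w\|_{L^2(D)}$ and invoking Step~2, Proposition~\ref{nabla u-est} and Theorem~\ref{T1}, the right-hand side lies in $L^\infty(0,T;H^1(D))$; Lemma~\ref{elliptic} with $m=1$ then gives $w\in L^\infty(0,T;H^3(D))$, and the bound $\|w\|_{L^\infty(0,T;W^{2,p}(D))}\le C$ follows from the 2D embedding $H^3(D)\hookrightarrow W^{2,p}(D)$ (or directly from Lemma~\ref{elliptic} with $m=0$, the right-hand side being also in $L^\infty(0,T;L^p(D))$ for all $p<\infty$). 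This proves the proposition.

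The hard part is Step~2: the term $\int_D(u_t\cdot\nabla w)\Delta w_t\,dx$ cannot be closed by the $w$-dissipation alone, because the $u$-equation carries no smoothing and $\nabla u_t$ is unavailable; one is forced to pay a factor $\|\nabla w\|_{L^\infty(D)}$, which is only square-integrable — not bounded — in time, and it is precisely to secure this information that the maximal parabolic regularity bound of Step~1 must be established beforehand. As usual, the formal differentiation in $t$ and the energy manipulations are made rigorous by a Galerkin or difference-quotient-in-time approximation, which we omit.
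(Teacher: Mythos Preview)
Your proof is correct and follows essentially the same route as the paper's: both first use Lemma~\ref{heat} (maximal parabolic regularity) to get $w\in L^q(0,T;W^{2,q}(D))$ and hence $\|\nabla w\|_{L^\infty(D)}\in L^2(0,T)$, then multiply \eqref{wt-equ} by $-\Delta w_t$ and handle the dangerous term $\int_D(u_t\cdot\nabla w)\Delta w_t\,dx$ via $\|u_t\|_{L^2}\|\nabla w\|_{L^\infty}\|\Delta w_t\|_{L^2}$, closing with Gronwall, and finally read off the $W^{2,p}$ and $H^3$ bounds from elliptic regularity. The only cosmetic difference is the order at the end: the paper first derives $\|w\|_{L^\infty(0,T;W^{2,p})}$ from $w_t\in L^\infty(0,T;H^1)\hookrightarrow L^\infty(0,T;L^p)$ and then $\|w\|_{L^\infty(0,T;H^3)}$, whereas you go straight to $H^3$ and then invoke $H^3(D)\hookrightarrow W^{2,p}(D)$.
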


\begin{proof} To prove this proposition, we first show that
$$
\|\nabla w_t\|_{L^\infty(0,T; \, L^2(D))}\leq C.
$$
By Lemma \ref{heat}, for any $2\leq p<\infty$,
\begin{equation*}\begin{split}
&\int_{0}^{T}\|\Delta w\|_{L^{p}(D)}^pdt\\
\leq& C\int_{0}^{T}\big[\|w_0\|_{H^{2}(D)}^p+\| w\|_{L^{p}(D)}^p+\|\Omega\|_{L^{p}(D)}^p
+\|u\cdot\nabla w\|_{L^{p}(D)}^p\big]dt\\
\leq& C\int_{0}^{T}\big[\|w_0\|_{H^{2}(D)}^p+\|w\|_{H^{1}(D)}^p+\|\Omega\|_{L^{p}(D)}^p
+\|u\|_{L^{2p}(D)}^p\|\nabla w\|_{L^{2p}(D)}^p\big]dt.
\end{split}
\end{equation*}	
Due to the embedding
$$
\|\nabla w\|_{L^{2p}(D)} \le C\, \|w\|_{H^{2}(D)}
$$
and the global bound on $\|w\|_{L^\infty(0,T; \, H^2(D))}$ in
Proposition \ref{w-H2-est}, we obtain
\begin{equation}\label{w-w-2,p}\begin{split}
\int_{0}^{T}\|w\|_{W^{2,p}(D)}^p\,dt\leq C.
\end{split}
\end{equation}		
Multiplying \eqref{wt-equ} by $-\Delta w_t$ and integrating on $D$, it follows that
\begin{equation}\label{wt-H1}\begin{split}
&\frac12\frac{d}{dt}\|\nabla w_t\|_{L^2(D)}^2+\gamma\|\Delta w_t\|_{L^2(D)}^2+4\kappa\|\nabla w_t\|_{L^2(D)}^2\\=&\int_{D}  u\cdot\nabla w_t \Delta w_tdx-\int_{D} u_t\cdot\nabla w \Delta w_tdx-2\kappa\int_{D} \Omega_t\Delta w_t dx.
\end{split}
\end{equation}
By H\"{o}lder's inequality, Sobolev's embedding and Lemma \ref{curl},
\begin{equation*}\begin{split}
\int_{D}  u\cdot\nabla w_t \Delta w_tdx&\leq \|u\|_{L^\infty(D)}\|\nabla w_t\|_{L^2(D)}\|\Delta w_t\|_{L^2(D)}\\&\leq \frac\gamma6\|\Delta w_t\|_{L^2(D)}^2+\frac C\gamma\|u\|_{H^2(D)}^2\|\nabla w_t\|_{L^2(D)}^2\\
&\leq \frac\gamma6\|\Delta w_t\|_{L^2(D)}^2+\frac C\gamma[\|u\|_{L^2(D)}^2+\|\nabla \Omega\|_{L^2(D)}^2]\|\nabla w_t\|_{L^2(D)}^2
\end{split}
\end{equation*}
and
\begin{equation*}\begin{split}
-\int_{D}  u_t\cdot\nabla w \Delta w_tdx&\leq \|u_t\|_{L^2(D)}\|\nabla w\|_{L^\infty(D)}\|\Delta w_t\|_{L^2(D)}\\&\leq \frac\gamma6\|\Delta w_t\|_{L^2(D)}^2+\frac C\gamma\|u_t\|_{L^2(D)}^2
\|w\|_{W^{2,p}(D)}^2.
\end{split}
\end{equation*}
By the vorticity equation \eqref{eq31} and Lemma \ref{curl},
\begin{equation*}\begin{split}
-2\kappa\int_{D}  \Omega_t\Delta w_t dx&=2\kappa\int_{D} u\cdot\nabla\Omega\Delta w_t dx+4\kappa^2\int_{D} \Delta w\Delta w_t dx\\&\leq\frac\gamma6\|\Delta w_t\|_{L^2(D)}^2+ \frac{C\kappa}{\gamma}\|u\|_{H^2(D)}^2\|\nabla \Omega\|_{L^2(D)}^2+\frac{C\kappa^2}{\gamma}\|\Delta w\|_{L^2(D)}^2\\
&\leq\frac\gamma6\|\Delta w_t\|_{L^2(D)}^2+ \frac{C\kappa}{\gamma}[\|u\|_{L^2(D)}^2+\|\nabla \Omega\|_{L^2(D)}^2]\|\nabla \Omega\|_{L^2(D)}^2\\
&\quad+\frac{C\kappa^2}{\gamma}\|w\|_{H^2(D)}^2.
\end{split}
\end{equation*}
Inserting the estimates above in \eqref{wt-H1}
and invoking the regularity bounds obtained before, we have
\begin{equation} \label{ntu}
\|\nabla w_t\|_{L^2(D)}\leq C.
\end{equation}	
By Lemma \ref{elliptic},
\begin{equation*}
\begin{split}
\|w\|_{W^{2,p(D)}}&\leq C\big[\|w_t\|_{L^p(D)}+\|u\cdot\nabla w\|_{L^p(D)}+\|w\|_{L^p(D)}+\|\Omega\|_{L^p(D)}\big]\\
&\leq C\big[\|w_t\|_{H^1(D)}+\|u\|_{L^{2p}(D)}\|\nabla w\|_{L^{2p}(D)}+\|w\|_{H^1(D)}+\|\Omega\|_{L^p(D)}\big]\\
&\leq C\big[\|w_t\|_{H^1(D)}+\|u\|_{H^1(D)}\|\nabla w\|_{H^1(D)}+ \|w\|_{H^1(D)}+\|\Omega\|_{L^p(D)}\big].
\end{split}
\end{equation*}
According to (\ref{ntu}) and Proposition \ref{w-w2p},
\begin{equation}\label{w-W2p-est}
\|w\|_{L^\infty(0, T; \,W^{2,p}(D))}\leq C.
\end{equation}	
As a consequence, by Lemma \ref{elliptic},
\begin{equation}\label{w-H3}\begin{split}
\|w\|_{H^3(D)}&\leq C\big[\|w_t\|_{H^1(D)}+\|u\cdot\nabla w\|_{H^1(D)}+\|w\|_{H^1(D)}+\|\Omega\|_{H^1(D)}\big]\\
&\leq C\big[\|w_t\|_{H^1(D)}+\|u\|_{L^\infty(D)}\|\nabla w\|_{H^1(D)}+\|\nabla w\|_{L^\infty(D)}\|u\|_{H^1(D)}\\
&\quad+\|w\|_{H^1(D)}+\|\Omega\|_{H^1(D)}\big]\\
&\leq C\big[\|w_t\|_{H^1(D)}+\|u\|_{H^2(D)}\|w\|_{H^2(D)}+\| w\|_{W^{2,p}(D)}\|u\|_{H^1(D)}\\
&\quad+\|w\|_{H^1(D)}+\|\Omega\|_{H^1(D)}\big]\\
&\leq C.
\end{split}
\end{equation}
This completes the proof of Proposition \ref{w-H3-est}.
\end{proof}

\vskip .1in
Finally we prove the global $H^4$-bound for $w$ by making full use of the structure of the micropolar equations and classical elliptic regularization theory.

\begin{proposition}\label{w-H4-est}
Assume that $(u_0, w_0)$ satisfies the conditions in Theorem \ref{T2}.
Let $(u,w)$ be the corresponding solution of \eqref{eq1}-\eqref{eq20} guaranteed by Proposition \ref{weak2}. Then,  for any $T>0$ and
$0<t\le T$,
\begin{equation*}
\|w\|_{L^\infty(0,T; H^4(D))}\leq C,
\end{equation*}
where the constant $C$ depends only on $D, T$ and the initial data.	
\end{proposition}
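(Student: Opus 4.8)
The plan is to obtain $w\in L^\infty(0,T;H^4(D))$ from the elliptic regularization estimate of Lemma~\ref{elliptic}. Writing the $w$-equation of \eqref{eq1} as $-\Delta w=\tfrac1\gamma\bigl(2\kappa\Omega-w_t-4\kappa w-u\cdot\nabla w\bigr)$ in $D$ with $w|_{\p D}=0$, this is of the form \eqref{eel} with source $g$ and $m=2$, so Lemma~\ref{elliptic} gives $\|w\|_{H^4(D)}\le C\|g\|_{H^2(D)}$. Hence it suffices to bound, uniformly on $[0,T]$, the quantities $\|\Omega\|_{H^2(D)}$, $\|w_t\|_{H^2(D)}$, and $\|u\cdot\nabla w\|_{H^2(D)}$. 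The last one follows at once from the product estimate \eqref{c-0} of Lemma~\ref{commutator-est} together with $u\in L^\infty(0,T;W^{2,p}(D))$ (Lemma~\ref{curl} and Proposition~\ref{nabla u-est}) and $w\in L^\infty(0,T;H^3(D))$ (Proposition~\ref{w-H3-est}). So everything reduces to proving (i) $\Omega\in L^\infty(0,T;H^2(D))$ and (ii) $w_t\in L^\infty(0,T;H^2(D))$.

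For (i) I would exploit the equation \eqref{Z-equ} for $Z=\Omega+\tfrac{2\kappa}\gamma w$ one more time. Applying $\partial_i\partial_j$, multiplying by $\partial_i\partial_j Z$ and summing, the transport term drops because $\nabla\cdot u=0$ and $u\cdot{\bm n}|_{\p D}=0$; the commutator $[\partial_i\partial_j,u\cdot\nabla]Z$ is controlled by $\|\nabla u\|_{L^\infty(D)}\|\nabla^2 Z\|_{L^2(D)}^2$ plus $\|\nabla^2 u\|_{L^4(D)}\|\nabla Z\|_{L^4(D)}\|\nabla^2 Z\|_{L^2(D)}$, and the linear source by $\|\nabla^2 w\|_{L^2(D)}\|\nabla^2 Z\|_{L^2(D)}$. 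All the coefficients are already bounded: $\|\nabla u\|_{L^\infty(D)}$ and $\|\nabla Z\|_{L^4(D)}$ by Proposition~\ref{nabla u-est}, $\|\nabla^2 u\|_{L^4(D)}$ by Lemma~\ref{curl} (with $m=2$, using $\nabla\times u=\Omega\in W^{1,4}(D)$), and $\|\nabla^2 w\|_{L^2(D)}$ by Proposition~\ref{w-H2-est}. Gronwall's inequality then yields $\|\nabla^2 Z\|_{L^\infty(0,T;L^2(D))}\le C$ — here $\nabla^2 Z_0\in L^2(D)$ is ensured by $u_0\in H^3(D)$, $w_0\in H^4(D)$ — and therefore $\Omega=Z-\tfrac{2\kappa}\gamma w\in L^\infty(0,T;H^2(D))$.

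For (ii), Lemma~\ref{elliptic} applied to the $w_t$-equation gives $\|w_t\|_{H^2(D)}\le C\|\tfrac1\gamma(2\kappa\Omega_t-w_{tt}-4\kappa w_t-u\cdot\nabla w_t-u_t\cdot\nabla w)\|_{L^2(D)}$, and every term on the right is already bounded in $L^\infty(0,T;L^2(D))$ — using $\Omega_t=-u\cdot\nabla\Omega-2\kappa\Delta w$ together with $u,\nabla w\in L^\infty$, $\nabla\Omega,\Delta w,\nabla w_t\in L^2$, $u_t\in L^2$ (Propositions~\ref{w-H2-est} and \ref{w-H3-est}) — \emph{except} $w_{tt}$. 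Thus the heart of the matter is $\|w_{tt}\|_{L^\infty(0,T;L^2(D))}\le C$, and that is the step I expect to be the main obstacle. I would first record two auxiliary bounds that avoid the pressure altogether: applying Lemma~\ref{curl} (with $m=1$) to $u_t$, which is divergence-free, tangent to $\p D$, and satisfies $\nabla\times u_t=\Omega_t\in L^2(D)$, gives $\|u_t\|_{L^\infty(0,T;H^1(D))}\le C$; and applying the Leray projection $P$ to the $u$-equation of \eqref{eq1} differentiated once in time kills $\nabla\pi_t$ and yields $u_{tt}=P[-(u_t\cdot\nabla)u-(u\cdot\nabla)u_t+2\kappa\nabla^\perp w_t]$, hence $\|u_{tt}\|_{L^\infty(0,T;L^2(D))}\le C$ from $u,u_t\in L^4$, $\nabla u\in L^4$, $\nabla u_t,\nabla w_t\in L^2$. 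Then I would differentiate \eqref{wt-equ} in time and test with $w_{tt}$: the convection term vanishes, $\gamma\int_D\Delta w_{tt}\,w_{tt}=-\gamma\|\nabla w_{tt}\|_{L^2(D)}^2$, the terms $u_{tt}\cdot\nabla w$, $u_t\cdot\nabla w_t$ and (after expanding $\Omega_{tt}$ via \eqref{vv} and integrating by parts) the convective pieces of $2\kappa\,\Omega_{tt}$ are all absorbed into $\gamma\|\nabla w_{tt}\|_{L^2(D)}^2+4\kappa\|w_{tt}\|_{L^2(D)}^2$ up to terms integrable in $t$ (using $\int_0^T\|\Delta w_t\|_{L^2(D)}^2\,dt\le C$ from Proposition~\ref{w-H3-est}), while the one remaining piece $-4\kappa^2\int_D\Delta w_t\,w_{tt}=2\kappa^2\tfrac{d}{dt}\|\nabla w_t\|_{L^2(D)}^2$ is an exact time-derivative whose integral is controlled by $\|\nabla w_t\|_{L^\infty(0,T;L^2(D))}\le C$. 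Integrating in time then gives $\|w_{tt}(t)\|_{L^2(D)}^2\le C$, with $w_{tt}(0)\in L^2(D)$ guaranteed by $u_0\in H^3(D)$, $w_0\in H^4(D)$ and the compatibility conditions \eqref{eq3}--\eqref{eq4}; feeding this back into the elliptic estimate yields (ii).

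Combining (i), (ii), the product estimate for $u\cdot\nabla w$, and Lemma~\ref{elliptic} with $m=2$ then gives $\|w\|_{L^\infty(0,T;H^4(D))}\le C$. As in the previous sections, the computations above are a priori estimates carried out on the regularized solutions of Proposition~\ref{weak2} and then passed to the limit; the only genuinely delicate point is the $L^2$-bound on $w_{tt}$, which closes precisely because $u_{tt}$ can be controlled via the Leray structure (so no pressure estimate is needed) and because the worst surviving term reorganizes into $\tfrac{d}{dt}\|\nabla w_t\|_{L^2(D)}^2$, whose time integral is already known to be bounded.
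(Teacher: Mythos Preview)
Your proposal is correct and follows essentially the same architecture as the paper: the elliptic estimate Lemma~\ref{elliptic} reduces $\|w\|_{H^4}$ to bounding $\|\Omega\|_{H^2}$ (done via the $Z$-equation and Gronwall, exactly the paper's Lemma~\ref{V-H2}) and $\|w_t\|_{H^2}$ (done via another elliptic estimate that reduces to $\|u_{tt}\|_{L^2}$ and $\|w_{tt}\|_{L^2}$, exactly the paper's Lemma~\ref{Vt-L2}). The one tactical difference worth noting is your treatment of $2\kappa\int_D\Omega_{tt}\,w_{tt}\,dx$: you expand $\Omega_{tt}$ through the time-differentiated vorticity equation and reorganize the residual $-4\kappa^2\int_D\Delta w_t\,w_{tt}$ as the exact derivative $2\kappa^2\tfrac{d}{dt}\|\nabla w_t\|_{L^2}^2$, whereas the paper uses the simpler identity $\Omega_{tt}=\nabla\times u_{tt}$ and one integration by parts to obtain $-2\kappa\int_D u_{tt}\cdot\nabla^{\perp}w_{tt}\,dx\le\tfrac{\gamma}{6}\|\nabla w_{tt}\|_{L^2}^2+C\|u_{tt}\|_{L^2}^2$, which is quicker and avoids the time-derivative bookkeeping.
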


\begin{proof}
By Lemma \ref{elliptic},
\begin{equation*}\begin{split}
\|w\|_{H^4(D)}&\leq C\big[\|w_t\|_{H^2(D)}+\|u\cdot\nabla w\|_{H^2(D)}+\|w\|_{H^2(D)} +\, \|\Omega\|_{H^2(D)}\big].
\end{split}
\end{equation*}
$\|w\|_{H^2(D)}$ is globally bounded due to Proposition \ref{w-H2-est}.
To bound $\|u\cdot\nabla w\|_{H^2(D)}$, we apply Lemma \ref{curl}, Lemma \ref{commutator-est} and the Sobolev embedding $H^2(D)\hookrightarrow L^\infty(D)$ to get
\begin{equation*}\begin{split}
\|u\cdot\nabla w\|_{H^2(D)}&\leq C\big[\|u\|_{L^\infty(D)}\|\nabla w\|_{H^2(D)}+\|\nabla w\|_{L^\infty(D)}\|u\|_{H^2(D)}\big]\\
&\leq C\|u\|_{H^2(D)}\|\nabla w\|_{H^2(D)}\\
&\leq C\big[\|\Omega\|_{H^1(D)}+\|u\|_{L^2(D)}\big]\|w\|_{H^3(D)} \le C.
\end{split}
\end{equation*}
according to Propositions \ref{nabla u-est}-\ref{w-H3-est}. Our main
efforts are devoted to bounding
$$
\|w_t\|_{H^2(D)} \quad\mbox{and}\quad \|\Omega\|_{H^2(D)}.
$$
The following two lemmas establish these desired global bounds. With
the help of these lemmas, we obtain
$$
\|w\|_{L^\infty(0,T;\, H^4(D))} \le C.
$$
The two lemmas and their proofs are given below. This completes the
proof of Proposition \ref{w-H4-est}.
\end{proof}

\vskip .1in
We have used the fact that $\|w_t\|_{H^2(D)}$ is globally bounded.
The following lemma states this fact and then prove this fact.

\begin{lemma}\label{Vt-L2}
Assume that $(u_0, w_0)$ satisfies the conditions in Theorem \ref{T2}.
Let $(u,w)$ be the corresponding solution of \eqref{eq1}-\eqref{eq20} guaranteed by Proposition \ref{weak2}. Then,  for any $T>0$ and
$0<t\le T$,
\begin{equation*}
\|\Omega_{t}\|_{L^\infty(0,T; L^p(D))}\leq C
\quad\mbox{and}\quad
\|w_t\|_{L^\infty(0, T; \,H^2(D)} \leq C,
\end{equation*}
where the constant $C$ depends only on $D, T$ and the initial data.	
\end{lemma}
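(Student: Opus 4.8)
To establish Lemma \ref{Vt-L2} I would treat the two bounds in turn, the first being immediate and the second carrying the weight of the argument. For $\Omega_t$ one simply reads it off the vorticity equation (\ref{vv}), namely $\Omega_t=-u\cdot\nabla\Omega-2\kappa\Delta w$, so that for $2\le p<\infty$
\[
\|\Omega_t\|_{L^p(D)}\le \|u\|_{L^\infty(D)}\|\nabla\Omega\|_{L^p(D)}+2\kappa\|\Delta w\|_{L^p(D)}\le C,
\]
using $u\in L^\infty(0,T;H^2(D))\hookrightarrow L^\infty(0,T;L^\infty(D))$ (Proposition \ref{nabla u-est} and Lemma \ref{curl}), $\nabla\Omega\in L^\infty(0,T;L^p(D))$ (Proposition \ref{nabla u-est}), and $\Delta w\in L^\infty(0,T;L^p(D))$ (Proposition \ref{w-H3-est}). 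For $w_t\in L^\infty(0,T;H^2(D))$ I would reduce matters, via Lemma \ref{elliptic} applied to (\ref{wt-equ}) rewritten as $-\gamma\Delta w_t=-(w_{tt}+u\cdot\nabla w_t+u_t\cdot\nabla w+4\kappa w_t-2\kappa\Omega_t)$ together with $w_t|_{\partial D}=0$ (which holds since $w|_{\partial D}=0$), to the single estimate $\|w_{tt}\|_{L^\infty(0,T;L^2(D))}\le C$; the remaining terms on the right are then bounded in $L^2(D)$ using (\ref{ntu}), (\ref{ut-est}), Proposition \ref{w-H2-est} and the $\Omega_t$ bound just obtained.

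Two preliminary facts about the velocity and pressure feed into the $w_{tt}$ estimate. First, $u_t\in L^\infty(0,T;H^1(D))$: from $u_t=-u\cdot\nabla u-\nabla\pi+2\kappa\nabla^{\perp}w$, the terms $u\cdot\nabla u$ and $\nabla^{\perp}w$ lie in $H^1(D)$ with bounded norm (by Lemma \ref{commutator-est}, using $u\in H^2(D)$, $\nabla u\in L^\infty(D)$ and $w\in W^{2,p}(D)$), while $\pi$ solves $\Delta\pi=-\partial_i\partial_j(u_iu_j)$ with the Neumann datum of (\ref{eq4}); since $u_iu_j\in H^2(D)$ by Lemma \ref{commutator-est} and the boundary datum lies in $H^{1/2}(\partial D)$, Neumann elliptic regularity gives $\|\pi\|_{H^2(D)}\le C$. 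Second, $\pi_t\in L^\infty(0,T;L^4(D))$: differentiating the pressure equation yields $\Delta\pi_t=-\partial_i\partial_j\big(\partial_t(u_iu_j)\big)$, and since $w_t|_{\partial D}=0$ forces $\nabla^{\perp}w_t\cdot{\bm n}|_{\partial D}=0$, the $w$-contribution drops out of the boundary datum for $\pi_t$, so the Calder\'on--Zygmund (Neumann elliptic) estimate gives $\|\pi_t\|_{L^4(D)}\le C\|\partial_t(u_iu_j)\|_{L^4(D)}\le C\|u\|_{L^\infty(D)}\|u_t\|_{L^4(D)}\le C$, with $u_t\in H^1(D)\hookrightarrow L^4(D)$. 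These two facts keep $u_{tt}=-u_t\cdot\nabla u-u\cdot\nabla u_t-\nabla\pi_t+2\kappa\nabla^{\perp}w_t$ under control in the combination needed below.

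The core estimate comes from differentiating (\ref{wt-equ}) in time, substituting $\partial_t\Omega_t=-u_t\cdot\nabla\Omega-u\cdot\nabla\Omega_t-2\kappa\Delta w_t$ and the expression for $u_{tt}$, and testing the resulting parabolic equation for $w_{tt}$ against $w_{tt}$. Integrating by parts (using $w_{tt}|_{\partial D}=0$ and $\nabla\cdot u=0$) produces $\tfrac12\tfrac{d}{dt}\|w_{tt}\|_{L^2(D)}^2+\gamma\|\nabla w_{tt}\|_{L^2(D)}^2+4\kappa\|w_{tt}\|_{L^2(D)}^2$ on the left, against a sum of terms on the right which I would bound by H\"older's inequality together with Propositions \ref{w-H2-est}--\ref{w-H3-est}, (\ref{ntu}) and the two facts above. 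The routine contributions are each $\le C\|w_{tt}\|_{L^2(D)}$, or $\le\tfrac\gamma8\|\nabla w_{tt}\|_{L^2(D)}^2+C$ after a further integration by parts onto $\nabla w_{tt}$ (as in $\int_D(u\cdot\nabla\Omega_t)w_{tt}\,dx=-\int_D(u\cdot\nabla w_{tt})\,\Omega_t\,dx$). The two delicate contributions are: $\int_D u_{tt}\cdot\nabla w\,w_{tt}\,dx$, which via the formula for $u_{tt}$ and the preliminary bounds (the $\nabla\pi_t$-part being integrated by parts against $\nabla w\,w_{tt}$ and then estimated with $\pi_t\in L^4(D)$, $w\in W^{2,4}(D)$) is $\le\tfrac\gamma8\|\nabla w_{tt}\|_{L^2(D)}^2+C\|w_{tt}\|_{L^2(D)}+C$; and the trilinear term $2\int_D u_t\cdot\nabla w_t\,w_{tt}\,dx\le C\|u_t\|_{L^4(D)}\|\nabla w_t\|_{L^4(D)}\|w_{tt}\|_{L^2(D)}$, for which I would interpolate $\|\nabla w_t\|_{L^4(D)}\le C(\|w_t\|_{H^2(D)}^{1/2}+1)$ by Corollary \ref{C1} and (\ref{ntu}), and then use $\|w_t\|_{H^2(D)}\le C\|\Delta w_t\|_{L^2(D)}+C\le C\|w_{tt}\|_{L^2(D)}+C$ (Lemma \ref{elliptic} applied to (\ref{wt-equ})), so that this term, like $-4\kappa^2\int_D\Delta w_t\,w_{tt}\,dx$, is $\le C\|w_{tt}\|_{L^2(D)}^2+C$ by Young's inequality. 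Collecting everything and absorbing the $\tfrac\gamma8$-terms,
\[
\frac{d}{dt}\|w_{tt}\|_{L^2(D)}^2+\tfrac{\gamma}{2}\|\nabla w_{tt}\|_{L^2(D)}^2\le C\|w_{tt}\|_{L^2(D)}^2+C,
\]
and Gronwall's inequality gives $\|w_{tt}\|_{L^\infty(0,T;L^2(D))}\le C$ provided $w_{tt}(0)\in L^2(D)$, which is verified by evaluating the equations at $t=0$ and invoking the compatibility conditions (\ref{eq3})--(\ref{eq4}) with $u_0\in H^3(D)$, $w_0\in H^4(D)$. Feeding this into the first-paragraph reduction yields $\|w_t\|_{L^\infty(0,T;H^2(D))}\le C$.

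I expect the genuine obstacle to be the term $\int_D u_{tt}\cdot\nabla w\,w_{tt}\,dx$: taming it is exactly what forces the auxiliary bounds $u_t\in L^\infty(0,T;H^1(D))$ and $\pi_t\in L^\infty(0,T;L^4(D))$, which in turn rest on Neumann elliptic regularity for the pressure and on the boundary identity $\nabla^{\perp}w_t\cdot{\bm n}|_{\partial D}=0$. A secondary difficulty is closing the trilinear terms $\int_D u_t\cdot\nabla w_t\,w_{tt}\,dx$, where the self-improving estimate $\|w_t\|_{H^2(D)}\lesssim\|w_{tt}\|_{L^2(D)}+1$ coming from (\ref{wt-equ}) is essential. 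As usual, all of the above formal manipulations should be carried out first on the approximate solutions from Proposition \ref{weak2} (for which $w_{ttt}$, $u_{tt}$ and the like are legitimate) and then passed to the limit.
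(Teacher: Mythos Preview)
Your argument is essentially correct, but it takes a noticeably harder road than the paper at precisely the point you flag as the obstacle, namely the term $\int_D u_{tt}\cdot\nabla w\,w_{tt}\,dx$ (and, relatedly, $2\kappa\int_D\Omega_{tt}w_{tt}\,dx$).

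The paper never touches the pressure. Instead, it first obtains $\|u_{tt}\|_{L^\infty(0,T;L^2(D))}\le C$ directly, by dotting the time-differentiated velocity equation $u_{tt}+u\cdot\nabla u_t+u_t\cdot\nabla u+\nabla\pi_t=2\kappa\nabla^\perp w_t$ with $u_{tt}$; since $\nabla\cdot u_{tt}=0$ and $u_{tt}\cdot\bm n|_{\partial D}=0$, the pressure term $\int_D\nabla\pi_t\cdot u_{tt}\,dx$ vanishes identically, and the remaining terms are bounded using $\nabla u\in L^\infty$, $\|\nabla u_t\|_{L^2}\le C(\|\Omega_t\|_{L^2}+\|u_t\|_{L^2})$ from Lemma~\ref{curl}, and $\nabla w_t\in L^2$. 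With $\|u_{tt}\|_{L^2}$ in hand, the $w_{tt}$ energy estimate is closed by integration by parts alone: $-\int_D u_{tt}\cdot\nabla w\,w_{tt}\,dx=\int_D u_{tt}\cdot\nabla w_{tt}\,w\,dx$ and $2\kappa\int_D\Omega_{tt}w_{tt}\,dx=-2\kappa\int_D u_{tt}\cdot\nabla^\perp w_{tt}\,dx$ (using $\Omega_{tt}=\nabla\times u_{tt}$ and $w_{tt}|_{\partial D}=0$), each absorbed by $\|\nabla w_{tt}\|_{L^2}^2$ and $\|u_{tt}\|_{L^2}$. Likewise $-2\int_D u_t\cdot\nabla w_t\,w_{tt}\,dx=2\int_D u_t\cdot\nabla w_{tt}\,w_t\,dx$ needs only $\|u_t\|_{L^4}\le C\|u_t\|_{H^1}\le C(\|\Omega_t\|_{L^2}+\|u_t\|_{L^2})$ and $\|w_t\|_{L^4}\le C\|w_t\|_{H^1}$, with no self-referential bootstrap through $\|w_t\|_{H^2}$.

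Your route---writing $u_{tt}$ out term by term and controlling $\nabla\pi_t$ via Neumann elliptic/Calder\'on--Zygmund estimates, then using the identity $\nabla^\perp w_t\cdot\bm n|_{\partial D}=0$---is valid, but it imports machinery (pressure regularity in $L^4$ for the Neumann problem, boundary trace considerations for $\partial_t(u\cdot\nabla u)$) that the paper's two integrations by parts render unnecessary. The payoff of the paper's approach is a much shorter and more robust argument; the payoff of yours is that it makes the $H^1$ regularity of $u_t$ and the structure of the pressure explicit, which could be useful elsewhere but is not needed here.
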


\begin{proof}
Applying Lemma \ref{elliptic} to \eqref{wt-equ} yields
\begin{equation}\label{w-H4}\begin{split}
\|w_{t}\|_{H^2(D)}&\leq C\big[\|w_{tt}\|_{L^2(D)}+\|u\cdot\nabla w_t\|_{L^2(D)}+\|u_t\cdot\nabla w\|_{L^2(D)}\\
&\quad+\|w_t\|_{L^2(D)}+\|\Omega_t\|_{L^2(D)}\big]\\
&\leq C\big[\|w_{tt}\|_{L^2(D)}+\|u\|_{H^2(D)}\|\nabla w_t\|_{L^2(D)}+\|u_t\|_{L^2(D)}\|\nabla w\|_{H^2(D)}\\
&\quad+\|w_t\|_{L^2(D)}+\|\Omega_t\|_{L^2(D)}\big].
\end{split}
\end{equation}
Clearly, the terms $\|u\|_{H^2(D)}\|\nabla w_t\|_{L^2(D)}+\|u_t\|_{L^2(D)}\|\nabla w\|_{H^2(D)}+\|w_t\|_{L^2(D)}$ are all bounded. Therefore,  it suffices to bound $\|\Omega_t\|_{L^2(D)}$ and $\|w_{tt}\|_{L^2(D)}$.

\vskip .1in
Multiplying \eqref{eq31} by $|\Omega_{t}|^{p-2}\Omega_{t}$ and integrating on $D$, we have
\begin{equation}\label{Omegat-est}
\begin{split}
\|\Omega_{t}\|_{L^p(D)}^p&=-\int_{D}  u\cdot\nabla \Omega |\Omega_{t}|^{p-2}\Omega_{t} dx - 2\kappa \int_{D}  \Delta w|\Omega_{t}|^{p-2}\Omega_{t}dx.
\end{split}
\end{equation}
By H\"{o}lder's inequality and the embeddings $H^2(D)\hookrightarrow L^\infty(D)$ and $H^1(D)\hookrightarrow L^p(D)$,
\begin{equation*}\begin{split}
-\int_{D}  u\cdot\nabla \Omega |\Omega_{t}|^{p-2}\Omega_{t} dx&\leq
\|u\|_{L^\infty(D)}\|\nabla \Omega\|_{L^p(D)}\|\Omega_{t}\|_{L^p(D)}^{p-1}\\
&\leq\|u\|_{H^2(D)}\|\nabla \Omega\|_{L^p(D)}\|\Omega_{t}\|_{L^p(D)}^{p-1}
\end{split}
\end{equation*}
and
\begin{equation*}\begin{split}
2\kappa \int_{D}  \Delta w|\Omega_{t}|^{p-2}\Omega_{t}dx&\leq
C\kappa\|\Delta w\|_{L^p(D)}\|\Omega_{t}\|_{L^p(D)}^{p-1}\\
&\leq C\kappa\|w\|_{H^3(D)}\|\Omega_{t}\|_{L^p(D)}^{p-1}.
\end{split}
\end{equation*}
Inserting the estimates above in \eqref{Omegat-est} yields
\begin{equation*}\begin{split}
\|\Omega_{t}\|_{L^p(D)}&\leq C\big[\|u\|_{H^2(D)}\|\nabla \Omega\|_{L^p(D)}+\| w\|_{H^3(D)}\big]\\
&\leq C\big[(\|\Omega\|_{H^1(D)}+\|u\|_{L^2(D)})\|\nabla \Omega\|_{L^p(D)}+\| w\|_{H^3(D)}\big] \le C.
\end{split}\end{equation*}

\vskip .1in
Next we prove the global bound
\begin{equation*}
\|u_{tt}\|_{L^\infty(0,T; L^2(D))}+\|w_{tt}\|_{L^\infty(0,T; L^2(D))}
\leq C.
\end{equation*}
Taking the temporal derivative of the velocity equation in  $(\ref{eq1})$ yields
\begin{equation}\label{utt-equ}
u_{tt}+u\cdot\nabla u_t+u_t\cdot\nabla u+\nabla p_t=-2\kappa\nabla^{\bot}w_t.
\end{equation}
Dotting \eqref{utt-equ} with $u_{tt}$ yields
\begin{equation*}\begin{split}
\|u_{tt}\|_{L^2(D)}^2&=-\int_{D}  u\cdot\nabla u_t\cdot u_{tt} dx-\int_{D} u_t\cdot\nabla u \cdot u_{tt} dx-2\kappa \int_{D} u_{tt}\cdot\nabla^{\bot}w_t dx\\
&\leq\frac12\|u_{tt}\|_{L^2(D)}^2+C\big[\|u\cdot\nabla u_t\|_{L^2(D)}^2+\|u_t\cdot\nabla u\|_{L^2(D)}^2+\|\nabla^{\bot}w_t\|_{L^2(D)}^2\big]\\
&\leq\frac12\|u_{tt}\|_{L^2(D)}^2+C\big[\|u\|_{L^\infty(D)}^2\|\nabla u_t\|_{L^2(D)}^2+\|u_t\|_{L^2(D)}^2\|\nabla u\|_{L^\infty(D)}^2\\
&\quad+\|\nabla w_t\|_{L^2(D)}^2\big]\\
&\leq\frac12\|u_{tt}\|_{L^2(D)}^2+C\big[\|u\|_{H^2(D)}^2(\|u_t\|_{L^2(D)}^2+\|\Omega_t\|_{L^2(D)}^2)\\
&\quad+\|u_t\|_{L^2(D)}^2\|u\|_{H^2(D)}^2+\|\nabla w_t\|_{L^2(D)}^2\big]\\
&\leq\frac12\|u_{tt}\|_{L^2(D)}^2+C\big[(\|u\|_{L^2(D)}^2+\|\nabla \Omega\|_{L^2(D)}^2)(\|u_t\|_{L^2(D)}^2+\|\Omega_t\|_{L^2(D)}^2)\\
&\quad+\|\nabla w_t\|_{L^2(D)}^2\big],
\end{split}
\end{equation*}
which implies
\begin{equation}\label{utt-est}
\|u_{tt}\|_{L^2(D)}\leq C.
\end{equation}	
To estimate $\|w_{tt}\|_{L^2(D)}$, we take the second-order temporal derivative of \eqref{wt-equ}
\begin{equation}\label{wttt-equ}
w_{ttt}+u\cdot\nabla w_{tt}+2u_t\cdot\nabla w_t+u_{tt}\cdot\nabla w+4\kappa w_{tt}=\gamma \Delta w_{tt}+2\kappa\Omega_{tt}.
\end{equation}
Multiplying \eqref{wttt-equ} by $w_{tt}$ and integrating on $D$, we have
\begin{equation}\label{wtt-est}
\begin{split}
&\frac12\frac{d}{dt}\|w_{tt}\|_{L^2(D)}^2+\gamma\|\nabla w_{tt}\|_{L^2(D)}^2+4\kappa\|w_{tt}\|_{L^2(D)}^2\\=&-2\int_{D}  u_t\cdot\nabla w_t w_{tt}dx-\int_{D}  u_{tt}\cdot\nabla w w_{tt}dx+2\kappa\int_{D}  \Omega_{tt}w_{tt} dx.
\end{split}
\end{equation}
By integration by parts, H\"{o}lder's inequality, Young's inequality, Lemma \ref{curl} and the embedding $H^1(D)\hookrightarrow L^4(D)$,
\begin{equation}\label{1-est}
\begin{split}
-2\int_{D}  u_t\cdot\nabla w_t w_{tt}dx&=2\int_{D}  u_t\cdot\nabla w_{tt}w_tdx\\&\leq
\|u_{t}\|_{L^4(D)}\|w_{t}\|_{L^4(D)}\|\nabla w_{tt}\|_{L^2(D)}\\&\leq
\frac\gamma6\|\nabla w_{tt}\|_{L^2(D)}^2+\frac C\gamma\|u_{t}\|_{H^1(D)}^2\|w_{t}\|_{H^1(D)}^2\\&\leq
\frac\gamma6\|\nabla w_{tt}\|_{L^2(D)}^2+\frac C\gamma[\|\Omega_{t}\|_{L^2(D)}^2+\|u_{t}\|_{L^2(D)}^2]\|w_{t}\|_{H^1(D)}^2,
\end{split}
\end{equation}
\begin{equation}\label{2-est}
\begin{split}
-\int_{D}  u_{tt}\cdot\nabla w w_{tt}dx&=\int_{D}  u_{tt}\cdot\nabla w_{tt} wdx\\&\leq
\|u_{tt}\|_{L^2(D)}\|w\|_{L^\infty(D)}\|\nabla w_{tt}\|_{L^2(D)}\\&\leq
\frac\gamma6\|\nabla w_{tt}\|_{L^2(D)}^2+\frac C\gamma\|u_{tt}\|_{L^2(D)}^2\|w\|_{H^2(D)}^2
\end{split}
\end{equation}
and
\begin{equation}\label{3-est}
\begin{split}
2\kappa\int_{D}  \Omega_{tt}w_{tt} dx&=-2\kappa\int_{D}  u_{tt}\cdot\nabla^{\bot}w_{tt} dx\\
&\leq\frac\gamma6\|\nabla w_{tt}\|_{L^2(D)}^2+\frac {C\kappa^2}{\gamma}\|u_{tt}\|_{L^2(D)}^2.
\end{split}
\end{equation}
Inserting \eqref{1-est}-\eqref{3-est} in \eqref{wtt-est} yields
\begin{equation}\label{wtt-est-1}
\begin{split}
\frac{d}{dt}\|w_{tt}\|_{L^2(D)}^2&\leq C(\|\Omega_{t}\|_{L^2(D)}^2+\|u_{t}\|_{L^2(D)}^2)\|w_{t}\|_{H^1(D)}^2\\
&\quad+C(1+\|u_{tt}\|_{L^2(D)}^2)\|w\|_{H^2(D)}^2.
\end{split}
\end{equation}
Integrating in time yields the desired estimates. This proves
Lemma \ref{Vt-L2}.
\end{proof}

\vskip .1in
We now move on to the second lemma asserting the global bound for
$\|\Omega\|_{H^2(D)}$.

\begin{lemma}\label{V-H2}
Assume that $(u_0, w_0)$ satisfies the conditions in Theorem \ref{T2}.
Let $(u,w)$ be the corresponding solution of \eqref{eq1}-\eqref{eq20} guaranteed by Proposition \ref{weak2}. Then,  for any $T>0$ and
$0<t\le T$,
\begin{equation*}
\|\Omega\|_{L^\infty(0,T; H^2(D))}\leq C.
\end{equation*}
where the constant $C$ depends only on $D, T$ and the initial data.
\end{lemma}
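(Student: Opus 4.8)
The plan is to bound $\Omega$ in $H^2$ through the combined quantity $Z=\Omega+\f{2\kappa}{\gamma}w$, just as its $L^p$ and $W^{1,p}$ bounds were obtained earlier. Writing $\Omega=Z-\f{2\kappa}{\gamma}w$ and recalling that $\|w\|_{L^\infty(0,T;H^2(D))}\le C$ by Proposition \ref{w-H2-est}, together with the bounds $\|Z\|_{L^\infty(0,T;L^p(D))}\le C$ from \eqref{Z--infty} and $\|\nabla Z\|_{L^\infty(0,T;L^p(D))}\le C$ from \eqref{NABLAZ-Lp} for every $2\le p<\infty$, it will suffice to establish $\|\nabla^2 Z\|_{L^\infty(0,T;L^2(D))}\le C$.

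To this end I would apply $\p_i\p_j$ to \eqref{Z-equ}, multiply by $\p_i\p_j Z$, sum over $i,j$ and integrate over $D$. The transport term $\int_D u\cdot\nabla\p_i\p_j Z\,\p_i\p_j Z\,dx$ vanishes since $\nabla\cdot u=0$ and $u\cdot{\bm n}|_{\p D}=0$, and one is left with
\begin{equation*}
\f12\f{d}{dt}\|\nabla^2 Z\|_{L^2(D)}^2=\f{4\kappa^2}{\gamma}\|\nabla^2 Z\|_{L^2(D)}^2-\f{8\kappa^2}{\gamma}\Big(1+\f\kappa\gamma\Big)\sum_{i,j}\int_D\p_i\p_j w\,\p_i\p_j Z\,dx-\sum_{i,j}\int_D\big(\p_i u\cdot\nabla\p_j Z+\p_j u\cdot\nabla\p_i Z+\p_i\p_j u\cdot\nabla Z\big)\,\p_i\p_j Z\,dx.
\end{equation*}
The $\nabla^2 w$ term is $\le C\|w\|_{H^2(D)}\|\nabla^2 Z\|_{L^2(D)}$, which is harmless by Proposition \ref{w-H2-est}, and the two terms involving $\p_i u\cdot\nabla\p_j Z$ are bounded by $C\|\nabla u\|_{L^\infty(D)}\|\nabla^2 Z\|_{L^2(D)}^2$ with $\|\nabla u\|_{L^\infty(D)}$ under control by Proposition \ref{nabla u-est}.

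The only term requiring care is $\int_D\p_i\p_j u\cdot\nabla Z\,\p_i\p_j Z\,dx$, because only the $L^2$ norm of $\nabla^2 Z$ is available. The point is that both $\nabla^2 u$ and $\nabla Z$ are bounded in every $L^q(D)$ with $q<\infty$: by Lemma \ref{curl}, $\|\nabla^2 u\|_{L^4(D)}\le C(\|\Omega\|_{W^{1,4}(D)}+\|u\|_{L^4(D)})$, which is bounded by Propositions \ref{w-w2p} and \ref{nabla u-est}, while $\|\nabla Z\|_{L^4(D)}\le\|\nabla\Omega\|_{L^4(D)}+\f{2\kappa}{\gamma}\|\nabla w\|_{L^4(D)}$ is bounded by Proposition \ref{nabla u-est} and Corollary \ref{C1}. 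Hence, by H\"older's inequality with exponents $4,4,2$ so that the borderline factor $\nabla^2 Z$ stays in $L^2(D)$,
\begin{equation*}
\Big|\int_D\p_i\p_j u\cdot\nabla Z\,\p_i\p_j Z\,dx\Big|\le\|\nabla^2 u\|_{L^4(D)}\,\|\nabla Z\|_{L^4(D)}\,\|\nabla^2 Z\|_{L^2(D)}\le C\|\nabla^2 Z\|_{L^2(D)}.
\end{equation*}

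Collecting these estimates and using Young's inequality gives $\f{d}{dt}\|\nabla^2 Z\|_{L^2(D)}^2\le C(1+\|\nabla^2 Z\|_{L^2(D)}^2)$, so Gronwall's inequality yields $\|\nabla^2 Z\|_{L^\infty(0,T;L^2(D))}\le C$; then $\|\Omega\|_{L^\infty(0,T;H^2(D))}\le\|Z\|_{L^\infty(0,T;H^2(D))}+\f{2\kappa}{\gamma}\|w\|_{L^\infty(0,T;H^2(D))}\le C$. I expect the main obstacle to be precisely the term $\int_D\p_i\p_j u\cdot\nabla Z\,\p_i\p_j Z\,dx$; everything else is a routine energy estimate built on the bounds of the preceding propositions. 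As usual, the differentiations should be performed on the regularized approximations of Proposition \ref{weak2} and then passed to the limit, which is legitimate since all the bounds above are uniform.
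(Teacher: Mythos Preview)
Your proof is correct and follows the same overall strategy as the paper: an $L^2$ energy estimate on the second derivatives of $Z$ derived from \eqref{Z-equ}, closed by Gronwall, followed by $\|\Omega\|_{H^2}\le\|Z\|_{H^2}+\frac{2\kappa}{\gamma}\|w\|_{H^2}$.

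The only point of divergence is the handling of the critical convection term. The paper writes it as a commutator $u\cdot\nabla\partial_i^2 Z-\partial_i^2(u\cdot\nabla Z)$ and applies the Kato--Ponce-type estimate \eqref{c-1}, which produces a factor $\|u\|_{H^3}\|Z\|_{L^\infty}$; since $\|u\|_{H^3}\le C(\|\Omega\|_{H^2}+\|u\|_{L^2})\le C(\|Z\|_{H^2}+\|w\|_{H^2}+\|u\|_{L^2})$ by Lemma~\ref{curl}, this is fed back into the Gronwall loop and closes because $\|Z\|_{L^\infty}$ is already bounded. You instead estimate $\int_D\partial_i\partial_j u\cdot\nabla Z\,\partial_i\partial_j Z\,dx$ directly by an $L^4$--$L^4$--$L^2$ H\"older split, exploiting the \emph{a priori} bounds $\|\nabla^2 u\|_{L^4}\le C\|\Omega\|_{W^{1,4}}+C\|u\|_{L^4}<\infty$ and $\|\nabla Z\|_{L^4}<\infty$ coming from Proposition~\ref{nabla u-est}. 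Your route is slightly more elementary in that it avoids the commutator lemma and the need to absorb a top-order term $\|u\|_{H^3}$ inside Gronwall; the paper's route, on the other hand, only uses the qualitative information $Z\in L^\infty$ and would still close even without the intermediate $L^p$ bounds on $\nabla\Omega$ for $p>2$.
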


\begin{proof} Taking $\partial_{i}^2$ of \eqref{Z-equ} yields
\begin{equation}\label{2-Z-equ}
\partial_t\partial_{i}^2Z+u\cdot\nabla \partial_{i}^2Z-\frac{4\kappa^2}{\gamma}\partial_{i}^2Z
+\frac{8\kappa^2}{\gamma}(1+\frac\kappa\gamma)\partial_{i}^2w=u\cdot\nabla \partial_{i}^2Z-\partial_{i}^2(u\cdot\nabla Z).
\end{equation}
Multiplying  \eqref{2-Z-equ} by $\partial_{i}^2Z$ and integrating on $D$, we have
\begin{equation}\label{2-Z-est}
\begin{split}
\frac12\frac{d}{dt}\|\partial_i^2 Z\|_{L^2(D)}^2&
=\frac{4\kappa^2}{\gamma}\|\partial_i^2 Z\|_{L^2(D)}^2-
\frac{8\kappa^2}{\gamma}(1+\frac\kappa\gamma)
\int_{D}  \partial_{i}^2w \partial_{i}^2Zdx\\
&\quad+\int_{D} \big(u\cdot\nabla \partial_{i}^2Z-\partial_{i}^2(u\cdot\nabla Z)\big)\partial_i^2 Z dx.
\end{split}
\end{equation}
By $\nabla\cdot u=0$ and the commutator estimate \eqref{c-1},
\begin{equation}\label{c-3}
\begin{split}
&\int_{D} \big(u\cdot\nabla \partial_{i}^2Z-\partial_{i}^2(u\cdot\nabla Z)\big)\partial_i^2 Z dx\\
=&\int_{D} \big(u\cdot\partial_{i}^2 \nabla Z-\partial_{i}^2 \nabla\cdot(u Z)\big)\partial_i^2 Z dx\\
\leq&C\big(\|\nabla u\|_{L^{\infty}(D)}\|Z\|_{H^2(D)}\|\partial_i^2 Z\|_{L^2(D)}+\|u\|_{H^3(D)}
\|Z\|_{L^\infty(D)}\|\partial_i^2 Z\|_{L^2(D)}\big).
\end{split}
\end{equation}
Then, by Lemma \ref{curl} and definition of $Z$, one has
\begin{equation*}\begin{split}
\|u\|_{H^3(D)}\leq C\big(\|\Omega\|_{H^2(D)}+\|u\|_{L^2(D)}\big)\leq C\big(\|Z\|_{H^2(D)}+\|w\|_{H^2(D)}+\|u\|_{L^2(D)}\big),
\end{split}
\end{equation*}	
which together with \eqref{Z--infty} and \eqref{c-3} yields
\begin{equation*}\begin{split}
&\int_{D} \big(u\cdot\nabla \partial_{i}^2Z-\partial_{i}^2(u\cdot\nabla Z)\big)\partial_i^2 Z dx\\
\leq&\|\nabla u\|_{L^{\infty}(D)}\|Z\|_{H^2(D)}\|\partial_i^2 Z\|_{L^2(D)}+
\|Z\|_{H^2(D)}\|\partial_i^2 Z\|_{L^2(D)}\\
&+(\|w\|_{H^2(D)}+\|u\|_{L^2(D)})\|\partial_i^2 Z\|_{L^2(D)}.
\end{split}
\end{equation*}
By H\"{o}lder's inequality,
\begin{equation*}
-\frac{8\kappa^2}{\gamma}(1+\frac\kappa\gamma)
\int_{D} \partial_{i}^2w \partial_{i}^2Zdx\leq\frac{8\kappa^2}{\gamma}(1+\frac\kappa\gamma)
\|\partial_{i}^2 w\|_{L^2(D)}\|\partial_{i}^2 Z\|_{L^2(D)}.
\end{equation*}
Inserting the estimates in \eqref{2-Z-est} and summing up by $i$, we have
\begin{equation*}
\begin{split}
\frac12\frac{d}{dt}\|\nabla^2 Z\|_{L^2(D)}^2\leq&
C(1+\|\nabla u\|_{L^{\infty}(D)}+\|w\|_{H^2(D)}^2+\|u\|_{L^2(D)}^2)\\
&\times(\|\nabla^2Z\|_{L^2(D)}^2+\|\nabla Z\|_{L^2(D)}^2+\|Z\|_{L^2(D)}^2).
\end{split}
\end{equation*}
Gronwall's inequality implies
\begin{equation*}
\|\nabla^2 Z\|_{L^2(D)}\leq C.
\end{equation*}
By the definition of $Z$ and Proposition \ref{w-H2-est}, we have
\begin{equation*}
\|\Omega\|_{H^2(D)}\leq \|Z\|_{H^2}+\|w\|_{H^2(D)}\leq C.
\end{equation*}
This completes the proof of Lemma \ref{V-H2}.
\end{proof}

\vskip .3in
\section{Large time behavior }
\label{Large}
\setcounter{equation}{0}

This section proves Theorem \ref{T3}, namely the large-time estimate stated in \eqref{eq7} under the condition that $\gamma >4 \kappa$.

\begin{proof}[Proof of Theorem \ref{T3}]
To begin with,	we take the inner product of $(\ref{eq5})$ with $(u, w)$
to obtain
\begin{equation}\label{decay-l2}
\begin{split}
&\frac12\frac{d}{dt}(\|u\|^{2}_{L^2(D)}+\|w\|^{2}_{L^2(D)})+\kappa \|u\|^{2}_{L^2(D)}+\gamma\|\nabla w\|_{L^2(D)}^2\\
=&2\kappa \int_D u\cdot\nabla^{\bot}w\,dx
+ 2\kappa\int_D w\nabla\times u\,dx
= 4 \kappa\,\int_D u\cdot\nabla^{\bot}w\,dx
\end{split}
\end{equation}
where we have used (\ref{bbb}) to combine the two
terms on the right. By H\"{o}lder's inequality,
$$
4 \kappa \,\int_D u\cdot\nabla^{\bot}w\,dx \le 4\kappa \|u\|_{L^2(D)}\|\nabla w \|_{L^2(D)}
 \leq  \frac{8 \kappa^2}{\gamma + 4 \kappa} \|u(t)\|_{L^2(D)}^2+ \frac{\gamma + 4 \kappa}{2} \|\nabla w(t)\|^2_{L^2(D)}.
$$
Therefore,
\begin{equation}\label{decaymm}
\frac{d}{dt}
\left( \|u(t)\|^2_{L^2(D)}+\|w(t)\|_{L^2(D)}^2\right) + \frac{2\kappa(\gamma-4\kappa)}{\gamma+ 4\kappa} \|u(t)\|^2_{L^2(D)}
  + (\gamma-4\kappa)\,\|\nabla  w(t)\|_{L^2(D)}^2 \le 0.
\end{equation}
Noticing that $w|_{\partial D}=0$, we have, by Lemma \ref{poincare},
\begin{equation*}
\|w\|_{L^2(D)}^2\leq C\|\nabla w\|_{L^2(D)}^2,
\end{equation*}
which, together with \eqref{decaymm}, yields
\begin{equation*}
\frac12\frac{d}{dt}(\|u\|^{2}_{L^2(D)}+\|w\|^{2}_{L^2(D)})+ \frac{2\kappa(\gamma-4\kappa)}{\gamma+ 4\kappa} \|u\|^{2}_{L^2(D)}+ \frac{(\gamma-4\kappa)}{C}\| w\|_{L^2(D)}^2 \leq 0.
\end{equation*}
Since $\gamma> 4\kappa$, it is then clear that
\begin{equation}\label{decay-l2-est}
\|u\|^{2}_{L^2(D)}+\|w\|^{2}_{L^2(D)}\leq e^{-C_0 t}(\|u_0\|^{2}_{L^2(D)}+\|w_0\|^{2}_{L^2(D)}),
\end{equation}
where $C_0$ is given by
$$
C_0 =\min\left\{\frac{2\kappa(\gamma-4\kappa)}{\gamma+ 4\kappa}, \, \frac{(\gamma-4\kappa)}{C}\right\}.
$$
In addition, multiplying (\ref{decaymm})
by $e^{\frac{C_0}{2}t}$ and integrating in time yields
\ben
&&\frac{2\kappa(\gamma-4\kappa)}{\gamma+ 4\kappa} \int_0^t e^{\frac{C_0}{2}\tau}\,\|u(\tau)\|^{2}_{L^2(D)}\,d\tau  + (\gamma-4\kappa)\,\int_0^te^{\frac{C_0}{2}\tau}\, \|\nabla w(\tau)\|_{L^2(D)}^2\,d\tau \notag \\
&&\qquad\qquad \le \|u_0\|^{2}_{L^2(D)}+\|w_0\|^{2}_{L^2(D)}. \label{inttt}
\een

\vskip .1in
We now turn to the decay of the gradient of $(u, w)$.  Multiplying (\ref{eq31}) by $\Omega$ and the $w$-equation in $(\ref{eq5})$ by $-\Delta w$ and then integrating on $D$, one has
\begin{equation}\label{decay-H1}
\begin{split}
&\frac12\frac{d}{dt}(\|\Omega\|^{2}_{L^2(D)}+\|\nabla w\|^{2}_{L^2(D)})+\kappa \|\Omega\|^{2}_{L^2(D)}+\gamma\|\Delta w\|_{L^2(D)}^2\\
=&-4\kappa\int_{D} \Delta w \Omega\, dx + \int_{D} u\cdot\nabla w
\Delta w\, dx.
\end{split}
\end{equation}
By Young's inequality,
$$
4\kappa\, \left|\int_{D}  \Delta w\, \Omega dx\right|
\le \frac{16 \kappa^2}{\gamma + 12 \kappa}\|\Omega\|_{L^2(D)}^2 + \frac{\gamma + 12 \kappa}{4}\|\Delta w\|^2_{L^2(D)}.
$$
By H\"{o}lder's inequality, Corollary \ref{C1},  Lemma \ref{curl} and Young's inequality,
\begin{equation*} \begin{split}
\int_{D} u\cdot\nabla w\Delta w dx\leq& C\|u\|_{L^4(D)}\|\nabla w\|_{L^4(D)}\|\Delta w\|_{L^2(D)}\\ \leq&
C\|u\|_{L^2(D)}^{\frac12}\|\nabla u\|_{L^2(D)}^{\frac12}(\|\nabla w\|_{L^2(D)}^{\frac12}\|\nabla^2 w\|_{L^2(D)}^{\frac12}\\
&+\|\nabla w\|_{L^2(D)})\|\Delta w\|_{L^2(D)}\\
\leq&C\|u\|_{L^2(D)}^{\frac12}(\|\Omega\|_{L^2(D)}^{\frac12}+ \|u\|_{L^2(D)}^{\frac12})(\|\nabla w\|_{L^2(D)}^{\frac12}\|\Delta w\|_{L^2(D)}^{\frac12}\\
&+\|\nabla w\|_{L^2(D)})\|\Delta w\|_{L^2(D)}\\
\leq&   \frac{\gamma-4\kappa}{4}\, \|\Delta w\|_{L^2(D)}^2\\
&+C(1+\|u\|_{L^2(D)}^2)(\|u\|_{L^2(D)}^2+\|\Omega\|_{L^2(D)}^2)\|\nabla w\|_{L^2(D)}^2.
\end{split}
\end{equation*}
Inserting the estimates above in \eqref{decay-H1}, we have
\begin{equation}\label{H1-decay}\begin{split}
&\frac{d}{dt}(\|\Omega\|^{2}_{L^2(D)}+\|\nabla w\|^{2}_{L^2(D)})
+ \frac{4 \kappa(\gamma-4\kappa)}{\gamma +12 \kappa}\,\|\Omega\|^{2}_{L^2(D)}+2(\gamma-4\kappa)\,\|\Delta w\|_{L^2(D)}^2\\
\leq& C(1+\|u\|_{L^2(D)}^2)(\|u\|_{L^2(D)}^2+\|\Omega\|_{L^2(D)}^2)\|\nabla w\|_{L^2(D)}^2.
\end{split}
\end{equation}
We note that $\gamma>4\kappa$. Applying Gronwall's inequality and using
(\ref{inttt}), we have
\ben
&&\|\Omega\|^{2}_{L^2(D)}+\|\nabla w\|^{2}_{L^2(D)}
\le  e^{C(1+\|u\|^2_{L^2(D)})\,\int_0^t\|\nabla w(\tau)\|_{L^2(D)}^2\,d\tau} \left(\|\Omega_0\|^{2}_{L^2(D)}+\|\nabla w_0\|^{2}_{L^2(D)} \right)
\notag\\ &&\,\,\, +\,e^{C(1+\|u\|^2_{L^2(D)})\,\int_0^t\|\nabla w(\tau)\|_{L^2(D)}^2\,d\tau}\left((\|u\|^2_{L^2(D)}+\|u\|^2_{L^4(D)})
\int_0^t\|\nabla w(\tau)\|_{L^2(D)}^2d\tau \right) \notag\\
&&\leq C(u_0, w_0), \label{ggg}
\een
where $C$ is a constant depending only on $H^1$-norm
of $(u_0, w_0)$. This global bound, together with (\ref{decay-l2-est}),
allows us to obtain the global exponential bound for the gradient
of $(u, w)$. In fact, if we set
$$
C_1= \min\left\{\frac{4 \kappa(\gamma-4\kappa)}{\gamma +12 \kappa},\,\frac{C_0}{2}\right\}
$$
and multiply (\ref{H1-decay}) by $e^{C_1 t}$ and integrate in time, we have
\beno
&&\frac{d}{dt}\left(e^{C_1 t} \|\Omega\|^{2}_{L^2(D)}+ e^{C_1 t}\|\nabla w\|^{2}_{L^2(D)}\right) \le C_1 e^{C_1t}\, \|\nabla w\|^{2}_{L^2(D)}\\
&& \qquad  + \,C\,(1+\|u\|_{L^2(D)}^2)(\|u\|_{L^2(D)}^2+\|\Omega\|_{L^2(D)}^2)\, e^{C_1 t}\,\|\nabla w\|_{L^2(D)}^2.
\eeno
Integrating in time and recalling the global bounds
in (\ref{inttt}) and (\ref{ggg}) yields that
$$
e^{C_1 t} \|\Omega(t)\|^{2}_{L^2(D)}+ e^{C_1 t}\|\nabla w(t)\|^{2}_{L^2(D)} \le C(u_0, w_0),
$$
where $C(u_0, w_0)$ depends on $\|u_0\|_{H^1(D)}$ and $\|w_0\|_{H^1(D)}$ only. This completes the proof of Theorem \ref{T3}.
\end{proof}

\vskip .4in
\section*{Acknowledgments}
Q. Jiu was partially supported by National Natural Sciences Foundation of China (No. 11231006 and No. 11671273). J. Wu was partially supported by National Science Foundation grants DMS 1209153
and DMS 1614246, by
the AT\&T Foundation at Oklahoma State University, and by National Natural Sciences Foundation of China (No. 11471103,
a grant awarded to Professor B. Yuan).

\vskip .3in


\begin{thebibliography}{10}
	
\bibitem{Bour} J. P. Bourguignon and H. Brezis, {\it Remarks on the Euler equation}, J. Funct. Anal. 15 (1974), 341-363.

\bibitem{CP} Z. Chen and W. G. Price, {\it Decay estimates of linearized micropolar fluid flows in $\R^3$ space with
applications to $L^3$-strong solutions}, Internat. J. Engrg. Sci. 44 (2006), 859-873.

\bibitem{DC} B. Dong and Z. Chen, {\it Regularity criteria of weak solutions to the three-dimensional micropolar
flows}, J. Math. Phys. 50 (2009), 103525.

\bibitem{DC2} B. Dong and Z. Chen, {\it Asymptotic profiles of solutions to the 2D viscous incompressible micropolar
fluid flows}, Discrete Cont. Dyn. Sys. 23 (2009), 765-784.

\bibitem{DLW} B. Dong, J. Li and J. Wu, {\it Global well-posedness and large-time decay for the 2D micropolar equaitons}, J. Differential Equations, revised.

\bibitem{DZ} B. Dong and Z. Zhang, {\it Global regularity of the 2D micropolar fluid flows with zero angular
viscosity}, J. Differential Equations 249 (2010), 200-213.

\bibitem{Er} A. C. Eringen, {\it Theory of micropolar fluids}, J. Math. Mech. 16 (1966), 1-18.

\bibitem{Er2} A. C. Eringen, {\it Micropolar fluids with stretch}, Int. J. Engng. Eci. 7 (1969), 115-127.

\bibitem{evans} L. C. Evans, {\it Partial differential equations}, Grad. Stud. Math, 19, Amer. Math. Soc., Providence, RI, 1968.

\bibitem{FT} C. Foias and R. Temam, {\it Remarques sur les \'{e}quations de Navier-Stokes stationnaires et les ph\'{e}nom\`{e}nes successifs de bifurcation}, Ann. Sci. Norm. Pisa V 1978, 29-63.

\bibitem{GT} D. Gilbarg and N. S. Trudinger, {\it Elliptic Partial Differential Equations of Second Order}, Springer-Verlag, Berlin-Heidelberg-New York, 1977.

\bibitem{Kato} T. Kato, {\it On classical solutions of the two-dimensional non-stationary Eular equation}, Arch. Rational Mech. Anal. 25(1967), 188-200.

\bibitem{KS} A. Kiselev and V. Sverak, {\it Small scale creation for solutions of the incompressible two-dimensional Euler equation}, Ann. of Math. (2) 180 (2014), 1205-1220.

\bibitem{Lady}O. A. Ladyzenskaja, V. A. Solonnikov, N. N. Ural'ceva, {\it Linear and quasilinear equations of parabolic type},  Transl. Math. Mono., Vol. 23 AMS, Providence, R.I. 1968.

\bibitem{PGL} P. G. Lemarie-Rieusset, {\it Recent Developments in the Navier-Stokes Problem, Chapman $\&$
Hall/CRC Research Notes in Mathematics Series}, CRC Press, 2002.

\bibitem{GLuk} G. Lukaszewicz, {\it Micropolar Fluids. Theory and Applications}, Model. Simul. Sci. Eng. Technol., Birkh\"{a}user, Boston, 1999.

\bibitem{MB}A. Majda and A. Bertozzi, {\it Vorticity and incompressible flow}, Cambridge Texts Appl. Math., 27, Cambridge Univ. Press, Cambridge, 2002.

\bibitem{NIR} L. Nirenberg, {\it On elliptic partial differential equations}, Ann. Scuola Norm. Sup. Pisa (3) 13 (1959), 115-162.


\bibitem{St} E. M. Stein, {\it Harmonic Analysis: Real-Variable Methods, Orthogonality, and Oscillatory Integrals},
Princeton University Press, Princeton, NJ, 1993.

\bibitem{LX} L. Xue, {\it Well posedness and zero microrotation viscosity limit of the 2D micropolar fluid equations}, Math. Methods Appl. Sci. 34 (2011), 1760-1777.

\end{thebibliography}
\end{document}